\documentclass[11pt, reqno]{amsart}
\usepackage{amsmath,amsthm,amsfonts,amssymb,amscd}
\usepackage{epsfig}
\input{epsf.tex}

\newtheorem{theorem}{Theorem}[section]
\newtheorem{proposition}[theorem]{Proposition}
\newtheorem{lemma}[theorem]{Lemma}
\newtheorem{corollary}[theorem]{Corollary}
\newtheorem{define}[theorem]{Definition}
\newtheorem{remark}[theorem]{Remark}

\def\Empty{}

%
\catcode`\@=11

\def\section{\@startsection {section}{1}{\z@}{-3.5ex plus -1ex minus
-.2ex}{2.3ex plus .2ex}{\large\bf}}


\def\fnum@figure{{\small Figure \thefigure}}
\def\fakefigure{\def\@captype{figure}}

\long\def\@makecaption#1#2{
    \vskip 10pt
    \def\FCap{#2} \def\NoCap{\ignorespaces}
    \ifx \FCap\NoCap
       \setbox\@tempboxa\hbox{#1}  
      \else
       \setbox\@tempboxa\hbox{#1: \small \it #2}
    \fi
    \ifdim \wd\@tempboxa >\hsize   
        \unhbox\@tempboxa\par      
      \else                        
        \hbox to\hsize{\hfil\box\@tempboxa\hfil}
    \fi}

\pagestyle{headings}
\oddsidemargin -0.15in
\evensidemargin 0.00in
\textwidth 6.9in
\textheight 9.5in
\headsep 12pt
\topmargin -28pt
\def\@oddhead{\hbox{}\rightmark \hfil \rm\thepage}
\def\sectionmark#1{\markright {\sc{\ifnum \c@secnumdepth >\z@
      \S\thesection.\hskip 1em\relax \fi #1}}}

\catcode`\@=12

\def\oplabel#1{
  \def\OpArg{#1} \ifx \OpArg\Empty {} \else
  	\label{#1}
  \fi}

%
%

%
%
\newlength{\saveu}

\catcode`\@=11


\newcommand{\wx}{\mbox{$\tilde x$}}
\newcommand{\wy}{\mbox{$\tilde y$}}
\newcommand{\wz}{\mbox{$\tilde z$}}

\newcommand{\cp}{\mbox{${\mathcal P}$}}

\newcommand{\cc}{\mbox{$\mathcal C$}}
\newcommand{\cT}{\mbox{$\mathcal T$}}

\newcommand{\oo}{\mbox{$\mathcal O$}}
\newcommand{\oos}{\mbox{${\mathcal O}^s$}}
\newcommand{\oou}{\mbox{${\mathcal O}^u$}}

\newcommand{\rrrr}{\mbox{${\bf R}$}}

\newcommand{\mi}{\mbox{$\widetilde M$}}
\newcommand{\wA}{\mbox{$\widetilde A$}}

\newcommand{\ls}{\mbox{$\Lambda^s$}}
\newcommand{\lu}{\mbox{$\Lambda^u$}}

\newcommand{\wls}{\mbox{$\widetilde \Lambda^s$}}
\newcommand{\wlu}{\mbox{$\widetilde \Lambda^u$}}

\newcommand{\wws}{\mbox{$\widetilde W^s$}}
\newcommand{\wwu}{\mbox{$\widetilde W^u$}}

\newcommand{\wwp}{\mbox{$\widetilde \Phi$}}

\newcommand{\gsg}{\mbox{${\mathcal G}^s$}}
\newcommand{\gug}{\mbox{${\mathcal G}^u$}}

\newcommand{\wt}{\mbox{$\widetilde T$}}

\newcommand{\fol}{\mbox{$\mathcal F$}}

\newcommand{\fn}{\mbox{$\widetilde {\mathcal F}$}}

\newcommand{\ws}{\mbox{$\widetilde  W^s$}}
\newcommand{\wu}{\mbox{$\widetilde  W^u$}}

\catcode`\@=12

\def\centeredepsfbox#1{\centerline{\epsfbox{#1}}}

\begin{document}

\title[\centerline{Totally periodic pseudo-Anosov flows in graph manifolds}]{Classification and Rigidity of totally periodic pseudo-Anosov flows in graph manifolds}

\author{Thierry Barbot}

\author{S\'{e}rgio R. Fenley}

\address{Thierry Barbot\\ Universit\'e d'Avignon et des pays de Vaucluse\\
LANLG, Facult\'e des Sciences\\
33 rue Louis Pasteur\\
84000 Avignon, France.}

\email{thierry.barbot@univ-avignon.fr}

\address{S\'ergio Fenley\\Florida State University\\
Tallahassee\\FL 32306-4510, USA \ \ and \ \
Princeton University\\Princeton\\NJ 08544-1000, USA}

\email{fenley@math.princeton.edu}

 \email{}
\maketitle

\vskip .2in

{\small
\noindent
{{\underline {Abstract}} $-$
In this article we analyze totally periodic pseudo-Anosov
flows in graph three manifolds. This means
that in
each Seifert fibered piece of the torus decomposition,
the free homotopy class of regular fibers has a finite power which is also a finite power of the
free homotopy class of a closed orbit of the flow.
We show that each such flow is topologically
equivalent to one of the  model pseudo-Anosov flows which we previously
constructed in \cite{bafe}. A model
pseudo-Anosov flow is obtained by glueing
standard neighborhoods of Birkhoff annuli and perhaps
doing Dehn surgery on certain orbits.
We also show that two model flows on the same graph manifold are isotopically equivalent (ie. there is a isotopy of $M$
mapping the oriented orbits of the first flow to the oriented orbits of the second flow)
if and only if they have the same topological and
dynamical data in the
collection of standard neighborhoods of the Birkhoff annuli.
}
}


\section{Introduction}

Pseudo-Anosov flows are extremely common amongst
three manifolds, for example\footnote{We also mention a recent work in
progress by F. B\'eguin, C. Bonatti and Bin Yu,
constructing a wide family of new Anosov flows; which can be seen as an extension
of the construction in \cite{bafe} (\cite{BBB}).}: \  1) Suspension
pseudo-Anosov flows \cite{Th1,Th2,Th3}, 2) Geodesic flows in
the unit tangent bundle of negatively curved surfaces \cite{An},
3) Certain flows transverse to foliations in
closed atoroidal manifolds \cite{Mo3,Cal1,Cal2,Cal3,Fe4};
\ flows obtained
from these by either 4) Dehn surgery on a closed orbit
of the pseudo-Anosov flow \cite{Go,Fr}, \ or 5) Shearing along tori
\cite{Ha-Th}; \ 6) Non transitive Anosov flows \cite{Fr-Wi} and
flows with transverse tori \cite{Bo-La}.

The purpose of this article is to analyse the question:
how many
essentially different pseudo-Anosov flows are there
in a manifold?
Two flows are essentially the same if they
are {\em topologically equivalent}.
This means that there is a homeomorphism
between the manifolds which sends orbits of the first
flow to orbits of the second flow preserving orientation along the orbits.
In this article, we will also consider the notion of {\em isotopic equivalence}, i.e.
a topological equivalence induced by an isotopy, that is, a homeomorphism
isotopic to the identity.

We will restrict to closed, orientable, toroidal manifolds. In particular
they are sufficiently large in the sense of \cite{waldlarge}, that is,
they have {\em incompressible surfaces} \cite{He,Ja}.
Manifolds with pseudo-Anosov flows are also irreducible
\cite{Fe-Mo}. It follows that these manifolds are Haken \cite{Ja}.
We have recently extended a result of the first author (\cite{Ba2}) to the case
of general pseudo-Anosov flows: if the ambient manifold
is Seifert fibered, then the flow is up to finite
cover topologically equivalent to a geodesic flow in
the unit tangent bundle of a closed hyperbolic surface
\cite[Theorem A]{bafe}.
In addition we also
proved that if the ambient manifold is a solvable three
manifold, then the flow is topologically equivalent to a
suspension Anosov flow \cite[Theorem B]{bafe}.
Notice that in both cases the flow
does not have singularities, that is, the type of the
manifold strongly restricts the type of pseudo-Anosov
that it can admit. This is in contrast with the
strong flexibility in the construction of pseudo-Anosov
flows $-$ that is because many flows are constructed
in atoroidal manifolds or are obtained by flow
Dehn surgery on the pseudo-Anosov flow, which
changes the topological type of the manifold.
Therefore in many constructions one 
cannot expect that the underlying manifold
is toroidal.

In this article we will mainly study pseudo-Anosov flows in
graph manifolds. A {\em graph manifold} is an irreducible
three manifold which is a union of Seifert fibered
pieces. In a previous article \cite{bafe} we produced
a large new class of examples in graph manifolds. These
flows are totally periodic. This means that each
Seifert piece of the torus decomposition of
the graph manifold is {\em periodic}, that is,
up to finite powers, a regular fiber is freely homotopic to a closed
orbit of the flow. More recently, Russ Waller
\cite{Wa} has been studying how common these 
examples are, that is, the existence question for these type
of flows. He showed that these flows are as common as they
could be (modulo the necessary conditions).

In this article we will analyse the question of
the classification and rigidity
of such flows. In order to state and understand the results
of this article we need to introduce the fundamental
concept of a {\em Birkhoff annulus}.
A Birkhoff annulus is an a priori only immersed annulus, so that the
boundary is a union of closed orbits of the flow and the
interior of the annulus is transverse to the flow.
For example consider the geodesic flow of a closed, orientable
hyperbolic surface. The ambient manifold is the unit
tangent bundle of the surface. Let $\alpha$ be an oriented
closed geodesic - a closed orbit of the flow - and consider
a homotopy that turns the angle along $\alpha$ by $\pi$.
The image of the homotopy from $\alpha$ to the same geodesic
with opposite orientation is a Birkhoff annulus
for the flow in the unit tangent bundle.
If $\alpha$ is not embedded then the Birkhoff annulus
is not embedded.
In general Birkhoff annuli are not embedded, particularly
in the boundary.
A Birkhoff annulus is transverse to the flow in its interior,
so it has induced stable and unstable foliations.
The Birkhoff annulus is {\em elementary} if these foliations
in the interior have no closed leaves.

In \cite[Theorem F]{bafe} we proved the following basic result about 
the relationship of a pseudo-Anosov flow and a periodic
Seifert piece $P$: there is a spine $Z$ for
$P$ which is a connected union of finitely many
elementary Birkhoff annuli. In addition the union of the
interiors of the Birkhoff annuli is embedded
and also disjoint
from the closed orbits in $Z$. These closed orbits, boundaries
of the Birkhoff annuli in $Z$, are called {\em vertical periodic orbits.}
The set $Z$ is a deformation
retract of $P$, so $P$ is isotopic to a  small
compact neighborhood $N(Z)$ of $Z$.
In general the Birkhoff annuli are not embedded in $Z$: 
it can be that the two boundary components
of the same Birkhoff annulus are the same vertical periodic
orbit of the flow. It can also occur that the
annulus wraps a few times around one of
its boundary orbits.
These are not exotic occurrences, but rather fairly common.
For every vertical periodic orbit $\alpha$ in $N(Z)$, the local stable leaf
of $\alpha$ is a finite union of annuli, called {\em stable vertical annuli,}
tangent to the flow, each realizing a homotopy between (a power of)
$\alpha$ and a closed loop
in $\partial N(Z)$. One defines similarly {\em unstable} vertical annuli in $N(Z)$.

We first analyse periodic Seifert pieces. The first theorems (Theorem A and B) are valid
for any closed orientable manifold $M$, not necessarily a graph manifold.
The first result is (see Proposition \ref{disjspin}):

\vskip .1in
\noindent
{\bf {Theorem A}} $-$ Let $\Phi$ be a pseudo-Anosov flow in
$M^3$. If $\{ P_i \}$ is the (possibly empty) collection of
periodic Seifert pieces of the torus decomposition of $M$,
then the spines $Z_i$ and neighborhoods $N(Z_i)$ can be
chosen to be pairwise disjoint.
\vskip .1in

We prove that the $\{ Z_i \}$ can be chosen pairwise disjoint.
Roughly this goes as follows: we show that
the vertical periodic orbits in $Z_i$ cannot intersect $Z_j$ for $j \not = i$,
because fibers in different Seifert pieces cannot have
common powers which are freely homotopic. We also show that
the possible interior
intersections are null homotopic and can be isotoped away.

The next result (Proposition \ref{transtor}) shows that the boundary of the pieces
can be put in good position with respect to the flow:

\vskip .1in
\noindent
{\bf {Theorem B}} $-$ Let $\Phi$ be a pseudo-Anosov flow and $P_i, P_j$
be periodic Seifert pieces with a common boundary torus $T$.
Then $T$ can be isotoped to a torus transverse to the flow.
\vskip .1in

The main property used to prove this result is that regular fibers restricted
to both sides of $T$ (from $P_i$ and $P_j$) cannot represent the
same isotopy class in $T$.

Finally we prove the following (Proposition \ref{goodposition}):
\vskip .1in
\noindent
{\bf {Theorem C}} $-$ Let $\Phi$ be a totally periodic pseudo-Anosov
flow with periodic Seifert pieces $\{ P_i \}$. Then
neighborhoods $\{ N(Z_i) \}$
of the spines $\{ Z_i \}$
can be chosen so that their union is $M$ and they have
pairwise disjoint
interiors. In addition each boundary component of every
$N(Z_i)$ is transverse to the flow. Each $N(Z_i)$ is flow
isotopic to an arbitrarily small neighborhood
of $Z_i$.
\vskip .1in

We stress that for general periodic pieces it is not true
that the boundary of $N(Z_i)$ can be isotoped to be
transverse to the flow. There are some simple examples
as constructed in \cite{bafe}. The point here is that we assume that
{\em all} pieces of the JSJ decomposition are periodic Seifert pieces.

Hence, according to Theorem C, totally periodic pseudo-Anosov flow are obtained
by glueing along the bondary  a collection of
small neighborhoods $N(Z_i)$ of the spines.
There are several ways to perform this glueing which lead to pseudo-Anosov flows.
The main result of this paper is that the resulting pseudo-Anosovs flow are
all topologically equivalent one to the other. More precisely (see section~\ref{sub.theoremD}):

\vskip .1in
\noindent
{\bf {Theorem D}} $-$ Let $\Phi$, $\Psi$ be two totally periodic pseudo-Anosov
flows on the same orientable graph manifold $M$. Let $P_i$ be the Seifert pieces of
$M$, and let $Z_i(\Phi)$, $Z_i(\Psi)$ be spines of $\Phi$, $\Psi$ in $P_i$.
Then, $\Phi$ and $\Psi$ are topologically equivalent if and only if there is a homeomorphism of $M$ mapping
the collection of spines $\{ Z_i(\Phi) \}$ onto the
collection $\{ Z_i(\Psi) \}$ and preserving the orientations of the vertical periodic orbits induced by the flows.
\vskip .1in

Theorem D is a consequence of the following Theorem, more technical but
slightly more precise (see section~\ref{pro.theoremD'}):

\vskip .1in
\noindent
{\bf {Theorem D'}} $-$ Let $\Phi$, $\Psi$ be two totally periodic pseudo-Anosov
flows on the same orientable graph manifold $M$. Let $P_i$ be the Seifert pieces of
$M$, and let $Z_i(\Phi)$, $Z_i(\Psi)$ be spines of $\Phi$, $\Psi$ in $P_i$, with tubular
neighborhoods $N(Z_i(\Phi))$, $N(Z_i(\Psi))$ as in the statement of Theorem C.
Then, $\Phi$ and $\Psi$ are isotopically equivalent if and only if,
after reindexing the collection $\{ N(Z_i(\Psi)) \}$,
 there is an isotopy in $M$ mapping every spine $Z_i(\Phi)$ onto $Z_i(\Psi)$, mapping every stable/unstable vertical annulus of $\Phi$ in $N(Z_i(\Phi))$ to a stable/vertical annulus in $N(Z_i(\Psi))$ and preserving the orientations of the vertical periodic orbits induced by the flows.
\vskip .1in

The main ideas of the proof are as follows. It is easy to show that, if the two flows
are isotopical equivalent, the isotopy maps every $Z_i(\Phi)$ onto a spine $Z_i(\Psi)$
of $\Psi$, every $N(Z_i(\Phi))$ onto a neighborhood $N(Z_i(\Psi))$, so that vertical stable/unstable annuli and the orientation of vertical periodic orbits are preserved.

Conversely, assume that up to isotopy $\Phi$ and $\Psi$ admit the same decomposition
in neighborhoods $N(Z_i)$ of spines $Z_i$, so that they share exactly the same oriented vertical periodic orbits and the same stable/unstable vertical annuli.
Consider all the lifts to the universal cover of the tori
in $\partial N(Z_i)$ for all $i$.
This is a collection $\cT$ of properly embedded topological
planes in $\mi$, which is transverse to the lifted
flows $\wwp$ and $\widetilde{\Psi}$. We show that an orbit of $\wwp$ or
$\widetilde{\Psi}$ (if not the lift of a vertical periodic orbits)
is completely determined by its itinerary up to shifts:
the itinerary is the collection of planes
it intersects. One thus gets a map between orbits of
$\wwp$ and orbits of $\widetilde{\Psi}$. This extends to the lifts of the vertical periodic
orbits. This is obviously group equivariant.
The much harder step is to prove that this is
continuous, which we do using the exact
structure of the flows and the combinatorics.
Using this result we can then show that the
flow $\Phi$ is topologically equivalent to
$\Psi$. Since the action on the fundamental group level
is trivial,
this topological equivalence is homotopic to the identity,
hence, by a Theorem by Waldhausen (\cite{waldlarge}),
isotopic to the identity: it is an isotopic equivalence.

We then show
(section~\ref{sub.isotopequiv}) that for any totally periodic pseudo-Anosov flow $\Phi$
there is a model pseudo-Anosov flow as constructed in \cite{bafe} which has precisely
the same data $Z_i$, $N(Z_i)$ that $\Phi$ has. This proves the following:

\vskip .1in
\noindent
{\bf {Main theorem}} $-$ Let $\Phi$ be a totally
periodic pseudo-Anosov flow in a graph manifold
$M$. Then $\Phi$ is topologically equivalent to
a model pseudo-Anosov flow.
\vskip .1in

Model pseudo-Anosov flows are defined
by some combinatorial data (essentially, the data of some fat graphs and Dehn surgery
coefficients; see section~\ref{sub.construct} for more details) and some parameter
$\lambda$. A nice corollary of Theorem D' is that, up to isotopic equivalence,
the model flows actually do not depend on the choice of $\lambda$, nor on the choice of
the selection of the particular glueing map between the model periodic pieces.

In the last section, we make a few remarks on the action of the mapping class group of $M$
on the space of isotopic equivalence classes of totally periodic pseudo-Anosov flows on $M$.

\section{Background}

\noindent
{\bf {Pseudo-Anosov flows $-$ definitions}}

\begin{define}{(pseudo-Anosov flow)}
Let $\Phi$ be a flow on a closed 3-manifold $M$. We say
that $\Phi$ is a pseudo-Anosov flow if the following conditions are
satisfied:


- For each $x \in M$, the flow line $t \to \Phi(x,t)$ is $C^1$,
it is not a single point,
and the tangent vector bundle $D_t \Phi$ is $C^0$ in $M$.

- There are two (possibly) singular transverse
foliations $\ls, \lu$ which are two dimensional, with leaves saturated
by the flow and so that $\ls, \lu$ intersect
exactly along the flow lines of $\Phi$.

- There is a finite number (possibly zero) of periodic orbits $\{ \gamma_i \}$,
called singular orbits.
A stable/unstable leaf containing a singularity is homeomorphic
to $P \times I/f$
where $P$ is a $p$-prong in the plane and $f$ is a homeomorphism
from $P \times \{ 1 \}$ to $P \times \{ 0 \}$.
In addition $p$ is at least $3$.

- In a stable leaf all orbits are forward asymptotic,
in an unstable leaf all orbits are backwards asymptotic.
\end{define}

Basic references for pseudo-Anosov flows are \cite{Mo1,Mo2} and
\cite{An} for Anosov flows. A fundamental remark is that the ambient manifold
supporting a pseudo-Anosov flow is necessarily irreducible - the
universal covering is homeomorphic to ${\bf R}^3$ (\cite{Fe-Mo}).
We stress that in our definition one prongs are not allowed.
There are however ``tranversely hyperbolic" flows with one prongs:

\begin{define}{(one prong pseudo-Anosov flows)}{}
A flow $\Phi$ is a one prong pseudo-Anosov flow in $M^3$ if it satisfies
all the conditions of the definition of pseudo-Anosov flows except
that the $p$-prong singularities  can also be
$1$-prong ($p = 1$).
\end{define}

\vskip .05in
\noindent
{\bf {Torus decomposition}}

Let $M$ be an irreducible closed $3$--manifold. If $M$ is orientable, it  has a unique (up to isotopy)
minimal collection of disjointly embedded incompressible tori such that each component of $M$
obtained by cutting along the tori is either atoroidal or Seifert-fibered \cite{Ja,Ja-Sh}
and the pieces are isotopically maximal with this property.
If $M$ is not orientable,
a similar conclusion holds; the decomposition has to be performed along tori, but also along
some incompressible embedded Klein bottles.

Hence the notion of maximal Seifert pieces in $M$ is well-defined up to isotopy. If $M$ admits
a pseudo-Anosov flow, we say that a Seifert piece $P$  is {\em periodic} if there is a
Seifert fibration on $P$ for which, up to finite powers, a regular
fiber is freely homotopic to a periodic orbit of $\Phi$. If not,
the piece is called {\em free.}

\vskip .05in
\noindent{\bf {Remark. }}
In a few circumstances, the Seifert fibration is not unique: it happens for example
when $P$ is homeomorphic to a twisted line bundle over the Klein bottle or
$P$ is $T^2 \times I$.
We stress out that our convention is to say that the Seifert piece is free
if
{\underline {no}} Seifert fibration in $P$ has fibers homotopic to a periodic orbit.

\vskip .1in
\noindent
{\bf {Orbit space and leaf spaces of pseudo-Anosov flows}}

\vskip .05in
\noindent
\underline {Notation/definition:} \
We denote by $\pi: \mi \to M$ the universal covering of $M$, and by $\pi_1(M)$ the fundamental group of $M$,
considered as the group of deck transformations on $\mi$.
The singular
foliations lifted to $\mi$ are
denoted by $\wls, \wlu$.
If $x \in M$ let $W^s(x)$ denote the leaf of $\ls$ containing
$x$.  Similarly one defines $W^u(x)$
and in the
universal cover $\ws(x), \wu(x)$.
Similarly if $\alpha$ is an orbit of $\Phi$ define
$W^s(\alpha)$, etc...
Let also $\wwp$ be the lifted flow to $\mi$.

\vskip .05in

We review the results about the topology of
$\wls, \wlu$ that we will need.
We refer to \cite{Fe2,Fe3} for detailed definitions, explanations and
proofs.
The orbit space of $\wwp$ in
$\mi$ is homeomorphic to the plane $\rrrr^2$ \cite{Fe-Mo}
and is denoted by $\oo \cong \mi/\wwp$. There is an induced action of $\pi_1(M)$ on $\oo$.
Let

$$\Theta: \ \mi \ \rightarrow \ \oo \ \cong \ \rrrr^2$$

\noindent
be the projection map: it is naturally $\pi_1(M)$-equivariant.
If $L$ is a
leaf of $\wls$ or $\wlu$,
then $\Theta(L) \subset \oo$ is a tree which is either homeomorphic
to $\rrrr$ if $L$ is regular,
or is a union of $p$-rays all with the same starting point
if $L$ has a singular $p$-prong orbit.
The foliations $\wls, \wlu$ induce $\pi_1(M)$-invariant singular $1$-dimensional foliations
$\oos, \oou$ in $\oo$. Its leaves are $\Theta(L)$ as
above.
If $L$ is a leaf of $\wls$ or $\wlu$, then
a {\em sector} is a component of $\mi - L$.
Similarly for $\oos, \oou$.
If $B$ is any subset of $\oo$, we denote by $B \times \rrrr$
the set $\Theta^{-1}(B)$.
The same notation $B \times \rrrr$ will be used for
any subset $B$ of $\mi$: it will just be the union
of all flow lines through points of $B$.
We stress that for pseudo-Anosov flows there are at least
$3$-prongs in any singular orbit ($p \geq 3$).
For example, the fact that the orbit space in $\mi$ is
a $2$-manifold is not true in general if one allows
$1$-prongs.

\begin{define}
Let $L$ be a leaf of $\wls$ or $\wlu$. A slice of $L$ is
$l \times \rrrr$ where $l$ is a properly embedded
copy of the reals in $\Theta(L)$. For instance if $L$
is regular then $L$ is its only slice. If a slice
is the boundary of a sector of $L$ then it is called
a line leaf of $L$.
If $a$ is a ray in $\Theta(L)$ then $A = a \times \rrrr$
is called a half leaf of $L$.
If $\zeta$ is an open segment in $\Theta(L)$
it defines a {\em flow band} $L_1$ of $L$
by $L_1 = \zeta \times \rrrr$.
We use the same terminology of slices and line leaves
for the foliations $\oos, \oou$ of $\oo$.
\end{define}

If $F \in \wls$ and $G \in \wlu$
then $F$ and $G$ intersect in at most one
orbit.

We abuse convention and call
a leaf $L$ of $\wls$ or $\wlu$ {\em periodic}
if there is a non trivial covering translation
$g$ of $\mi$ with $g(L) = L$. This is equivalent
to $\pi(L)$ containing a periodic orbit of $\Phi$.
In the same way an orbit
$\gamma$ of $\wwp$
is {\em periodic} if $\pi(\gamma)$ is a periodic orbit
of $\Phi$. Observe that in general, the stabilizer of an element $\alpha$
of $\oo$ is either trivial, or a cyclic subgroup of $\pi_1(M)$.

\vskip .2in
\noindent
{\bf {Perfect fits, lozenges and scalloped chains}}

Recall that a foliation $\fol$ in $M$ is $\rrrr$-covered if the
leaf space of $\fn$ in $\mi$ is homeomorphic to the real line $\rrrr$
\cite{Fe1}.

\begin{define}{(\cite{Fe2,Fe3})}\label{def:pfits}
Perfect fits -
Two leaves $F \in \wls$ and $G \in \wlu$, form
a perfect fit if $F \cap G = \emptyset$ and there
are half leaves $F_1$ of $F$ and $G_1$ of $G$
and also flow bands $L_1 \subset L \in \wls$ and
$H_1 \subset H \in \wlu$,
so that
%
the set

$$\overline F_1 \cup \overline H_1 \cup
\overline L_1 \cup \overline G_1$$

\noindent
separates $M$ and forms an a rectangle $R$ with a corner removed:
The joint structure of $\wls, \wlu$ in $R$ is that of
a rectangle with a corner orbit removed. The removed corner
corresponds to the perfect of $F$ and $G$ which
do not intersect.
\end{define}

We refer to fig. \ref{loz}, a for perfect fits.
There is a product structure in the interior of $R$: there are
two stable boundary sides and two unstable boundary
sides in $R$. An unstable
leaf intersects one stable boundary side (not in the corner) if
and only if it intersects the other stable boundary side
(not in the corner).
We also say that the leaves $F, G$ are {\em asymptotic}.

%
%
%
%

\begin{figure}
\centeredepsfbox{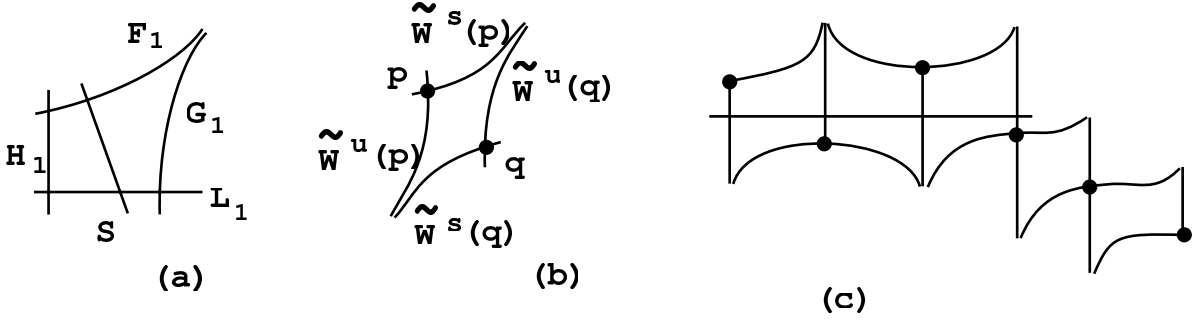}
\caption{a. Perfect fits in $\mi$,
b. A lozenge, c. A chain of lozenges.}
\label{loz}
\end{figure}

\begin{define}{(\cite{Fe2,Fe3})}\label{def:lozenge}
Lozenges - A lozenge $R$ is a region of $\mi$ whose closure
is homeomorphic to a rectangle with two corners removed.
More specifically two points $p, q$ define the corners
of a lozenge if there are half leaves $A, B$ of
$\ws(p), \wu(p)$ defined by $p$
and  $C, D$ half leaves of $\ws(q), \wu(q)$ defined by $p, q$, so
that $A$ and $D$ form a perfect fit and so do
$B$ and $C$. The region bounded by the lozenge
$R$ does not have any singularities.
%
%
%
%
The sides of $R$ are $A, B, C, D$.
The sides are not contained in the lozenge,
but are in the boundary of the lozenge.
There may be singularities in the boundary of the lozenge.
See fig. \ref{loz}, b.
\end{define}


There are no singularities in the lozenges,
which implies that
$R$ is an open region in $\mi$.


Two lozenges are {\em adjacent} if they share a corner and
there is a stable or unstable leaf
intersecting both of them, see fig. \ref{loz}, c.
Therefore they share a side.
A {\em chain of lozenges} is a collection $\{ \cc _i \},
i \in I$, where $I$ is an interval (finite or not) in ${\bf Z}$;
so that if $i, i+1 \in I$, then
${\mathcal C}_i$ and ${\mathcal C}_{i+1}$ share
a corner, see fig. \ref{loz}, c.
Consecutive lozenges may be adjacent or not.
The chain is finite if $I$ is finite.

\begin{define}{(scalloped chain)}\label{def:scallop}
Let ${\mathcal C}$ be a chain of lozenges.
If any two
successive lozenges in the chain are adjacent along
one of their unstable sides (respectively stable sides),
then the chain is called {\em s-scalloped}
(respectively {\em u-scalloped}) (see
fig. \ref{pict} for an example of a $s$-scalloped chain).
Observe that a chain is s-scalloped if
and only if there is a stable leaf intersecting all the
lozenges in the chain. Similarly, a chain is u-scalloped
if and only if there is an unstable leaf intersecting
all the lozenges in the chain.
The chains may be infinite.
A scalloped chain is a chain that is either $s$-scalloped or
$u$-scalloped.
\end{define}

\begin{figure}
\centeredepsfbox{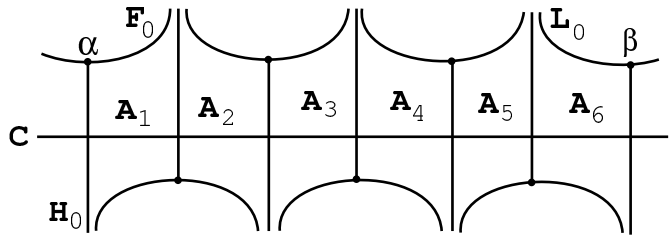}
\caption{
A partial view of a scalloped region.
Here $C, F_0, L_0$ are stable leaves,
so this is a s-scalloped region.}
\label{pict}
\end{figure}

For simplicity when considering scalloped chains we also include any half leaf which is
a boundary side of two of the lozenges in the chain. The union of these
is called a {\em {scalloped region}} which is then a connected set.

We say that two orbits $\gamma, \alpha$ of $\wwp$
(or the leaves $\ws(\gamma), \ws(\alpha)$)
are connected by a
chain of lozenges $\{ {\mathcal C}_i \}, 1 \leq i \leq n$,
if $\gamma$ is a corner of ${\mathcal C}_1$ and $\alpha$
is a corner of ${\mathcal C}_n$.

\begin{remark}
\label{rk.lozengeannulus}
{\em A key fact, first observed in \cite{Ba3}, and extensively used in \cite{bafe}, is the following:
the lifts in $\widetilde{M}$ of elementary Birkhoff annuli are precisely lozenges invariant by some cyclic subgroup of $\pi_1(M)$ (see \cite[Proposition $5.1$]{Ba3} for the case
of embedded Birkhoff annuli). It will also play a crucial role in the sequel.
More precisely: let $A$ be an elementary Birkhoff
annulus. We say that $A$ lifts to the
lozenge $C$ in $\mi$ if the interior
of $A$ has a lift which intersects
orbits only in $C$. It follows that
this lift intersects every orbit in $C$
exactly once and also that the two
boundary closed orbits of $A$ lift
to the
full corner orbits of $C$.
This uses the fact that a lozenge cannot be properly
contained
in another lozenge.}
\end{remark}

In particular
the following important property also follows: if $\alpha$ and $\beta$
are the periodic orbits in $\partial A$ (traversed in the
flow forward direction), then there are
{\underline {positive}} integers $n, m$ so that
$\alpha^n$ is freely homotopic
to $(\beta^m)^{-1}$. We emphasize the free homotopy between inverses.

\begin{remark}
{\em According to remark \ref{rk.lozengeannulus}, chains of lozenges
correspond to sequences of Birkhoff annuli,
every Birkhoff annulus sharing a common periodic orbit with the previous element
of the sequence, and also a periodic orbit with the next element in the sequence.
When the sequence closes up, it provides an immersion $f: T^2$ (or $K$) $\to M$, which
is called a} Birkhoff torus {\em (if the cyclic sequence contains an even number of
Birkhoff annuli),  or a} Birkhoff Klein bottle {\em (in the other case).}
\end{remark}

\section{Disjoint  pieces and transverse tori}
\label{sec:disj}
In \cite{bafe} section 7, we proved that if $P$ is
a periodic Seifert fibered piece of $M$ with
a pseudo-Anosov flow $\Phi$ then:
there is a connected, finite union $Z$ of elementary
Birkhoff annuli, which is
weakly embedded $-$ this means
that restricted to the union of the
interiors of the Birkhoff annuli it is embedded
and the periodic orbits are disjoint from the
interiors. 
In addition a small neighbourhood
$N(Z)$ is a {\em {representative}} for the Seifert fibered
piece $P$, that is, $N(Z)$ is isotopic to $P$.
We call such a $Z$ a {\em {spine}} for the
Seifert piece $P$.
In this section we prove several
important results concerning the relative
position of spines of distinct periodic
Seifert pieces (if there are such), and
also how the boundaries of such $N(Z)$
interact with the flow $\Phi$.

\begin{lemma}{}
Let $A_1, A_2$ be two elementary Birkhoff
annuli which lift to the
same lozenge $C$ in $\mi$ and so that the cores
of $A_1, A_2$ are freely homotopic.
Then $A_1$ is flow homotopic to
$A_2$ in the interior. That is, there is a homotopy $f_t$
from $A_1$ to $A_2$
so that for any $x$ in the interior of $A_1$,
\  $f_{[0,1]}(x)$
is contained in a flow line of $\Phi$. In addition
if $x$ is a point
where $A_1$ does not self
intersect,
then $f_t([0,1])(x)$
is a set of no self intersections,
of the homotopy.
\end{lemma}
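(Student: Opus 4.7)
The key input is Remark~\ref{rk.lozengeannulus}: the interior of each elementary Birkhoff annulus $A_i$ has a distinguished lift $\widetilde{A}_i \subset \mi$ which is a global section of $\wwp$ over the lozenge $C$, meeting every orbit in $C$ exactly once. The free-homotopy hypothesis on the cores of $A_1,A_2$ ensures that both lifts are invariant under the same cyclic stabilizer $\langle g\rangle \subset \pi_1(M)$ of $C$; recall that $g$ fixes the two corner orbits of $C$ and acts freely on the remaining orbits.

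To construct the homotopy, I define a flow-time function $\tau\colon C \to \rrrr$ by
\[
\wwp_{\tau(\gamma)}(\widetilde{A}_1 \cap \gamma) \; = \; \widetilde{A}_2 \cap \gamma \qquad \text{for every orbit } \gamma \subset C,
\]
which is continuous by transversality of the two sections to $\wwp$ together with continuity of the flow, and which is $\langle g\rangle$-invariant because both sections are. The equivariant flow isotopy $\widetilde{f}_t(\widetilde{x}) = \wwp_{t\tau(\gamma_{\widetilde{x}})}(\widetilde{x})$ on $\widetilde{A}_1$ then descends to a flow homotopy $f_t\colon \mathrm{int}(A_1) \to \mathrm{int}(A_2)$ in $M$ whose tracks lie in single flow lines of $\Phi$; it extends by the identity on the shared boundary periodic orbits (projections of the corner orbits of $C$) to a homotopy of all of $A_1$ onto $A_2$.

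For the embedded-track claim, suppose $x \in \mathrm{int}(A_1)$ has a unique preimage in $A_1$ and that the track $t \mapsto f_t(x)$ self-intersects on $[0,1]$. Lifting, the track $\widetilde{f}_{[0,1]}(\widetilde{x})$ is an embedded sub-arc of a single orbit $\gamma_{\widetilde{x}}$ in $\mi$, so self-intersection downstairs forces the orbit through $x$ to be periodic in $M$; equivalently, some nontrivial $h \in \pi_1(M)$ stabilizes $\gamma_{\widetilde{x}}$. Since interiors of distinct lozenges are disjoint and $\gamma_{\widetilde{x}}$ is an interior orbit of both $C$ and $hC$, we must have $h \in \langle g\rangle$; but $\langle g\rangle$ acts freely on non-corner orbits of $C$, forcing $h=1$, a contradiction. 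Hence the orbit through $x$ is non-periodic and the track embeds.

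The delicate point I anticipate is the equality of the stabilizers of $\widetilde{A}_1$ and $\widetilde{A}_2$, rather than coincidence only up to commensuration. This is exactly where the free-homotopy hypothesis on cores is used: applying Remark~\ref{rk.lozengeannulus} simultaneously to both annuli identifies the boundary periodic orbits of each $A_i$ with the two corner orbits of $C$, and the free homotopy of cores pins down the generator of the common stabilizer as the deck transformation realizing that homotopy, which is the preferred generator $g$ of the stabilizer of $C$.
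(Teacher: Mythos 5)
Your construction of the flow homotopy is essentially identical to the paper's proof: fix lifts $\widetilde A_1,\widetilde A_2$ whose interiors meet exactly the orbits of $C$, use the hypothesis on the cores to obtain a single element $g$ generating the stabilizer of both lifts, define the continuous, $g$-equivariant transit-time function along orbits, and push down the linear homotopy along flow lines. This is the same argument, correctly identifying where the core hypothesis enters.

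The final paragraph, on the ``no self intersections'' claim, contains a genuine error. You argue that a self-intersection of the track forces the orbit through $x$ to be periodic, and then rule this out, concluding that every orbit through the interior of an elementary Birkhoff annulus is non-periodic. That conclusion is false: periodic orbits of $\Phi$ routinely cross the interiors of Birkhoff annuli (in the geodesic flow example, any closed geodesic $\beta$ meeting $\alpha$ transversally yields a periodic orbit through the interior of the Birkhoff annulus over $\alpha$). The specific false step is ``interiors of distinct lozenges are disjoint, hence $h\in\langle g\rangle$.'' Distinct lozenges can share interior orbits --- this is precisely the situation handled in part II of the proof of Proposition \ref{disjspin}, where Birkhoff annuli in different spines have intersecting interiors --- and in the example above the deck transformation $h$ associated to $\beta$ fixes $\gamma_{\widetilde x}$ while $hC\neq C$ and $h\notin\langle g\rangle$. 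Note also that the track $f_{[0,1]}(x)$ is an arc inside a single orbit of $\Phi$, which is embedded only if its flow-length is shorter than the period of that orbit, so no argument via non-periodicity can establish the claim. The paper's own (admittedly terse) justification of this point is different in kind: it asserts that the homotopy is an \emph{isotopy} over the locus where $A_1$ is embedded, which rests on the fact that distinct points of $\widetilde A_1$ lie on distinct orbits of $C$ (so their lifted tracks are disjoint) and on comparing $\pi_1(M)$-translates of $\widetilde A_1$, not on periodicity of individual orbits. You should reformulate and reprove this last assertion accordingly.
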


\begin{proof}{}
Choose fixed lifts $\widetilde A_1, \widetilde A_2$
so that the interiors intersect exactly
the orbits in $C$.
Let $g$ in $\pi_1(M)$ so that  it generates
$Stab(\widetilde A_i)$.
The fact that a single $g$ generates both stabilizers
uses the condition on the cores of $A_1, A_2$.

Let $E$ be the interior of $\widetilde A_1$.
For any $p$ in $E$, there is
a unique real number $t(p)$ so that
$\wwp_{t(p)}(p)
= \eta_i(p)$ is a point in $\widetilde A_2$.
This map $t(p)$ is continuous and
clearly equivariant under $g$:
$\wwp_{t(g(p))} \ (g(p)) = g(\eta_i(p))$.
Since this is equivariant,
it projects to a map from the interior of $A_1$ to
the interior of
$A_2$. The linear homotopy
along the orbits in the required
homotopy. The homotopy is
an isotopy where $A_1$ is
embedded.
\end{proof}

\begin{proposition}{}{}
Let $\Phi$ be a pseudo-Anosov flow and
let   $\{ P_i, \ 1 \leq i \leq  n\}$
(where $n$ may be $0$) be the periodic
Seifert pieces of $\Phi$. Then we may choose the
spines $Z_i$ of $P_i$ so that they are pairwise
disjoint.
\label{disjspin}
\end{proposition}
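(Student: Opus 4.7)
My plan is to start with any initial collection of spines $Z_i \subset P_i$ produced by Theorem F of \cite{bafe} and then modify them one periodic piece at a time until the final family is pairwise disjoint. The intersections $Z_i \cap Z_j$ (for $i \neq j$) decompose into three types, which I handle separately: (a) a vertical periodic orbit of $Z_i$ that coincides as a closed orbit of $\Phi$ with a vertical periodic orbit of $Z_j$; (b) a vertical periodic orbit of $Z_i$ that meets the interior of a Birkhoff annulus of $Z_j$; and (c) interior--interior intersections of two Birkhoff annuli belonging to different $Z_i, Z_j$.

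The first step is to rule out (a) altogether. Suppose $\alpha$ is simultaneously a vertical periodic orbit of $Z_i$ and of $Z_j$. By the definition of ``vertical'' applied in each piece, there exist positive integers with $\alpha^{a} \sim f_i^{\,n}$ and $\alpha^{b} \sim f_j^{\,m}$ as free homotopy classes in $M$, where $f_i$ and $f_j$ are regular Seifert fibers of $P_i$ and $P_j$. Eliminating $\alpha$ gives a nontrivial free homotopy between a power of $f_i$ and a power of $f_j$. For the JSJ decomposition of an orientable irreducible manifold this forces $P_i$ and $P_j$ to have commensurable Seifert fibers along a common boundary, which (together with the maximality of the Seifert pieces and the fact that we have removed any $T^2 \times I$ pieces) is impossible unless $i=j$. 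So the families of vertical periodic orbits of distinct spines are disjoint as subsets of $M$.

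Next I handle (b). Since the interior of each Birkhoff annulus is transverse to $\Phi$, the vertical periodic orbit $\alpha$ of $Z_i$ can only meet the interior of a Birkhoff annulus $A$ of $Z_j$ at finitely many isolated points per period. Using the flow-homotopy lemma just established, and the fact (Remark~\ref{rk.lozengeannulus}) that $A$ lifts to a lozenge $C$ in $\mi$ in which we may slide transversals along the flow, I can replace $A$ by a flow-homotopic Birkhoff annulus $A'$ whose interior is push-forward of $A$ along a small flow time, chosen to avoid the finitely many intersection points with $\alpha$. Doing this simultaneously for all vertical periodic orbits of all $Z_k$ (which form a finite collection of orbits disjoint from $A$ by (a)) removes all type (b) intersections while keeping $Z_j$ a valid spine of $P_j$.

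The hard part is (c). Two interiors of Birkhoff annuli from different spines meet in a $1$-manifold (after general position), and I want to push this $1$-manifold off by an ambient isotopy preserving the spine structure. The key observation is that the cores of $A_i \subset Z_i$ and $A_j \subset Z_j$ point in the vertical directions of different Seifert fibrations of $P_i$ and $P_j$; on the boundary torus $T$ between $P_i$ and $P_j$ these directions are distinct isotopy classes of simple closed curves. I would therefore argue that each closed intersection curve is null-homotopic in each of $A_i, A_j$ (otherwise it would produce a free homotopy between powers of $f_i$ and $f_j$, contradicting step one), and each intersection arc has endpoints at orbits which by (a) and (b) can be pushed apart. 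An innermost disk / outermost arc argument then removes these intersections by an ambient isotopy supported near the $A_j$'s that does not affect the vertical periodic orbits already made disjoint. The main obstacle is precisely this last step: showing that the isotopies used to remove interior--interior intersections can be chosen so as not to reintroduce intersections at later stages of the induction over the pairs $(i,j)$. This will be handled by performing the entire modification within an arbitrarily small neighborhood of each $Z_j$ and respecting the flow-homotopy class of each annulus, so that the inductive hypothesis on previously separated pairs is preserved.
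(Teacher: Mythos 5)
Your steps (a) and (c) are essentially the paper's argument (for (a): fibers of distinct Seifert pieces of the JSJ decomposition cannot have freely homotopic common powers; for (c): non--null-homotopic intersection curves would again force common powers of fibers, so all intersection circles bound discs and an innermost-disc/irreducibility argument removes them by isotopy). The genuine problem is your step (b). A flow homotopy cannot remove an intersection of a Birkhoff annulus with a flow line: if $x\in\alpha\cap\operatorname{int}(A)$ and $A'$ is obtained by flowing each interior point of $A$ for a (possibly variable) positive time $t(x)$, then $\Phi_{t(x)}(x)$ still lies on $\alpha$ (which is flow-invariant) and on $A'$. Structurally, by Remark \ref{rk.lozengeannulus} the set of orbits of $\widetilde{\Phi}$ met by a lift of $\operatorname{int}(A)$ is exactly the set of orbits in the associated lozenge $C$, and this set is a flow-homotopy invariant; so if $\widetilde{\alpha}$ lies in $C$, no flow homotopy of $A$ (nor any ``small flow time'' push) will avoid $\alpha$. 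What is actually needed, and what the paper proves, is that configuration (b) \emph{cannot occur}: one encloses $A$ in a $\pi_1$-injective Birkhoff torus $T\subset Z_j$ without backtracking, whose lift $\widetilde{T}$ lies in a bi-infinite chain of lozenges $\mathcal{C}$ met exactly once by each orbit interior to the chain; since $\alpha$ lies in $P_i$ it can be homotoped off $Z_j$, which forces $\alpha$ to be freely homotopic into $T$, so a generator $g$ of $Stab(\widetilde{\alpha})$ can be taken in $Stab(\widetilde{T})$; then $g$ preserves $\mathcal{C}$ and the lozenge $B$ containing $\widetilde{\alpha}$, hence the pair of corners of $B$, which is incompatible with $g$ fixing the interior orbit $\widetilde{\alpha}$. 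Without an impossibility argument of this kind your induction never gets off the ground, because the type (b) intersections you propose to perturb away are not removable by the moves you allow.

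Two smaller points. In (c), once part (b) is known to be vacuous the intersection $A_i\cap A_j$ is a compact subset of the open interiors, so after general position every component is a circle; there are no ``intersection arcs with endpoints at orbits'' to treat, and you should not need $P_i$ and $P_j$ to be adjacent along a torus $T$ (they need not be): the contradiction for essential intersection curves comes directly from common powers of the two regular fibers, exactly as in (a).
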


\begin{proof}{}
The construction in \cite{bafe} shows that
the periodic orbits of each $Z_i$
are uniquely determined by $P_i$.
The Birkhoff annuli in $Z_i$ are not unique and
can be changed by flow homotopy.
We prove the proposition in several steps.

\vskip .1in
\noindent
I) For any $i \not = j$, a periodic orbit in $Z_i$ does
not intersect $Z_j$.

In this part there will be no need to make adjustments
to the Birkhoff annuli.
Suppose $\alpha$ is a closed orbit in $Z_i$ which
intersects $Z_j$ with $j, i$ distinct.
The first situation is that $\alpha$ intersects
a closed orbit $\beta$ in $Z_j$, in which
case $\alpha = \beta$.
Recall that in a periodic Seifert piece some power
$\alpha^{n_i}$ represents a regular fiber
in $Z_i$ and similarly some power $\beta^{n_j}$
represents a regular fiber in $Z_j$. But then
the regular fibers in $Z_i, Z_j$ have common
powers $-$ this is impossible for distinct
Seifert fibered pieces of $M$ \cite{He}.

The second situation is that $\alpha$ intersects
the interior of a Birkhoff annulus $A$ in
$Z_j$. Since $Z_j$ is a spine for the Seifert
piece $P_j$, there is an immersed Birkhoff torus
$T$ in $Z_j$ containing $A$.
In addition choose $T$ to be $\pi_1$-injective.
This can be achieved by looking at a lift
$\widetilde T$ to $\mi$ and the sequence of lozenges
intersected by $\widetilde T$. If there is no
backtracking in the sequence of lozenges then
$T$ is $\pi_1$-injective. It is easy to choose one such
$T$ with no backtracking.

Fix a lift $\widetilde A$ of $A$ contained in
a lift $\widetilde T$ of $T$ and let $\widetilde
\alpha$ be a lift of $\alpha$ intersecting $\widetilde A$.
Since $T$ is incompressible, $\widetilde T$ is a properly
embedded plane in $\mi$.
The topological plane $\widetilde T$ is contained
(except for the lifts of the periodic orbits)
in a bi-infinite chain of lozenges ${\mathcal C}$.
Any orbit in the interior of one of
the lozenges in ${\mathcal C}$ intersects
$\widetilde T$ exactly once.

Since $\alpha$ corresponds to a closed curve
in $P_i$ and $Z_j$ is isotopic into
$P_j$, then $\alpha$ can be homotoped to
be disjoint from $Z_j$. Lift this homotopy
to $\widetilde M$ from $\widetilde \alpha$ to
a bi-infinite curve in $\widetilde M$ disjoint
from $\widetilde T$.
Recall that $\widetilde \alpha$ intersects
$\widetilde T$ in a single point.
This implies that
a whole ray of $\widetilde \alpha$ has
to move cross $\widetilde T$ by the homotopy. Hence this ray
is at bounded distance from $\widetilde T$. As $\alpha$ is
compact this implies that $\alpha$ is freely homotopic
into $T$. Let $g$ be the covering translation
which is a generator of $Stab(\widetilde \alpha)$.
Then with the correct choices we can assume
that $g$ is in $\pi_1(T)$, which we assume
is contained in $Stab(\widetilde T)$.

This is now a contradiction because $g$
leaves invariant the bi-infinite chain of lozenges
${\mathcal C}$. In addition $g(\widetilde \alpha)
= \widetilde \alpha$ so $g$ leaves
invariant the lozenge $B$ of ${\mathcal C}$
containing $\widetilde \alpha$. But then it would
leave invariant the pair of corners of $B$,
contradiction to leaving invariang
$\widetilde \alpha$.
We conclude that this cannot happen.
This finishes part I).

\vskip .1in
\noindent
II) Suppose that for some $i \not = j$ there are
Birkhoff annuli $A \subset Z_i, B \subset Z_j$
so that $A \cap B \not = \emptyset$.

Notice that the intersections are in the interior
by part I). Recall also that the interiors of
the Birkhoff annuli are embedded. By a small
perturbation put the collection $\{ Z_k \}$
in general position with respect to itself.
Let $\delta$ be a component of $A \cap B$.

Suppose first that $\delta$ is not null homotopic
in $A$. Since $A$ is $\pi_1$-injective, then
the same is true for $\delta$ in $M$ and $\delta$
in $B$. Then $\delta$ is homotopic in
$A$ to a power of a boundary of $A$, which
itself has a common power with the regular fiber
of $P_i$. This implies that the fibers in
$P_i, P_j$ have common powers, contradiction
as in part I).

It follows that $\delta$ is null homotopic
in $A$ and hence bounds a disc $D$ in $A$.
Notice that $\delta$ is embedded as both
$A$ and $B$ have embedded interiors.
We proceed to eliminate such intersections
by induction.
We assume that $\delta$ is innermost
in $D$: the interior of $D$ does not intersect
any $Z_k, k \not = i$ (switch $j$ if necessary).
In addition $\delta$ also bounds a disc
$D'$ in $B$ whose interior is disjoint
from $D$ $-$ by choice of $D$.
Hence $D \cup D'$ is an embedded
sphere which bounds a ball $B$ in $M$ $-$ because
$M$ is irreducible.
We can use this ball to isotope $Z_j$ to
replace a neighborhood of $D'$ in $B$
by a disc close to $D$ and disjoint from $D$,
eliminating the intersection $\delta$ and
possibly others. Induction eliminates
all intersections so we can assume that all
$\{ Z_k \}$ are disjoint (for a more detailed explanation of
this kind of argument, see \cite[section $7$]{Ba3}).

Notice that the modifications in $\{ Z_k \}$ in
part II) were achieved by isotopies.
This finishes the proof of the proposition.
\end{proof}

Recall that we are assuming the manifold $M$ to be orientable, so that
we can use \cite[Theorem F]{bafe}; however the following Lemma holds
in the general case, hence we temporally drop the orientability hypothesis.

\begin{lemma}{(local transversality)}{}
Let $V$ be an immersed Birkhoff torus or
Birkhoff Klein bottle with no backtracking
$-$ this means that for any lift of $V$ to
$\mi$, the sequence of lozenges associated
to it has no backtracking.
Let $\widetilde V$ be a fixed lift of $V$ to
$\widetilde M$ and let $\widetilde \alpha$
a lift to $\widetilde V$ of a closed orbit
$\alpha$ in $V$.
There are well defined lozenges $B_1, B_2$ in
$\mi$ which contain a neighborhood of
$\widetilde \alpha$ in $\widetilde V$
(with $\widetilde \alpha$ removed): \
$\widetilde \alpha$ is a corner of both
$B_1$ and $B_2$.
If $B_1, B_2$ are adjacent  lozenges
then $V$ can be homotoped to be transverse
to $\Phi$ near $\alpha$.
Conversely if
$V$ can be homotoped to be transverse
to $\Phi$ near $\alpha$
then $B_1$ and $B_2$ are adjacent.
This is independent of the lift $\widetilde V$
of $V$ and of $\widetilde \alpha$.
\label{localtrans}
\end{lemma}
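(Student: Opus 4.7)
The proof will be entirely local around $\alpha$. Fix a flow-box neighborhood $\widetilde N$ of $\widetilde\alpha$ whose $\Theta$-image is a small disc $D_0\subset\oo$ centered at $\Theta(\widetilde\alpha)$. Since $\alpha$ is a $p$-prong orbit (with $p=2$ in the regular case and $p\geq 3$ in the singular case), the local stable/unstable foliations $\oos$ and $\oou$ meeting at $\Theta(\widetilde\alpha)$ subdivide $D_0\setminus\{\Theta(\widetilde\alpha)\}$ into $2p$ open sectors. By Remark~\ref{rk.lozengeannulus}, each lozenge having $\widetilde\alpha$ as a corner occupies exactly one such local sector; in particular $B_1$ and $B_2$ correspond to two sectors $S_1,S_2$, and the no-backtracking hypothesis guarantees $S_1\neq S_2$. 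The two Birkhoff annuli of $V$ meeting at $\alpha$ lift inside $\widetilde N$ to graphs $\widetilde A_i=\{(x,\tau_i(x)):x\in\overline{S_i}\cap D_0\}$ in flow-box coordinates $(x,t)\in D_0\times\rrrr$, glued along $\widetilde\alpha=\{\Theta(\widetilde\alpha)\}\times\rrrr$.

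\textbf{Step 1: adjacent implies transverse.} Suppose $B_1,B_2$ share a half-leaf $\sigma$ at $\widetilde\alpha$. Then $\overline{S_1}\cup\overline{S_2}$ contains the full half-leaf $\Theta(\sigma)$, and $U:=S_1\cup\Theta(\sigma)\cup S_2\cup\{\Theta(\widetilde\alpha)\}$ is an embedded topological half-disc in $D_0$. Set $\widetilde D:=U\times\{0\}\subset\widetilde N$; this is a piece of transverse section for $\wwp$. The ``straight-line along the flow'' homotopy $(x,\tau)\mapsto(x,(1-s)\tau)$ carries each graph $\widetilde A_i\cap\widetilde N$ onto $\overline{S_i}\times\{0\}\subset\widetilde D$, and the two homotopies agree on $\widetilde\alpha$. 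Projecting back to $M$ gives a homotopy of $V$, supported in $N(\alpha)$, whose endpoint is a surface transverse to $\Phi$ near $\alpha$.

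\textbf{Step 2: transverse implies adjacent (main obstacle).} Conversely, suppose $V$ is homotoped to $V'$, supported in $N(\alpha)$, with $V'\pitchfork\Phi$ near $\alpha$. Lifting, $\widetilde V'\cap\widetilde N$ is transverse to $\wwp$ and coincides with $\widetilde V$ outside the homotopy support. Since $\widetilde V'$ meets $\widetilde\alpha$ at an interior point of the transverse surface, the projection $\Omega:=\Theta(\widetilde V'\cap\widetilde N)$ is an open subset of $D_0$ containing $\Theta(\widetilde\alpha)$ as an interior point, and so $\Omega$ contains an open disc about $\Theta(\widetilde\alpha)$. Outside this small disc, $\widetilde V'\cap\widetilde N$ agrees with the graphs $\widetilde A_1,\widetilde A_2$, so the ``outer frontier'' of $\Omega$ lies in $\overline{S_1}\cup\overline{S_2}$. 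The key point is now this: for $\Omega$ to be a connected open neighborhood of $\Theta(\widetilde\alpha)$ whose outer boundary lies in $(\overline{S_1}\cup\overline{S_2})\setminus\{\Theta(\widetilde\alpha)\}$, one needs the two closed sectors to meet \emph{outside} $\Theta(\widetilde\alpha)$ $-$ otherwise $(\overline{S_1}\cup\overline{S_2})\setminus\{\Theta(\widetilde\alpha)\}$ is disconnected, and the transverse surface $\widetilde V'\cap\widetilde N$ cannot simultaneously connect both threads to a single disc about $\Theta(\widetilde\alpha)$ without crossing through an open sector whose closure it never reaches. Formally, one performs the ``orbit count'' argument: for $x\in D_0$, let $n(x)=\#(\widetilde V'\cap\Theta^{-1}(x)\cap\widetilde N)$; this integer is locally constant and equals $1$ on $S_1\cup S_2$ (from the unchanged graphs) and at least $1$ on the open disc around $\Theta(\widetilde\alpha)$ inside $\Omega$; comparing values across the boundary of the homotopy support forces continuity which fails unless $\overline{S_1}\cap\overline{S_2}$ contains a full half-leaf from $\Theta(\widetilde\alpha)$, i.e., $B_1$ and $B_2$ are adjacent. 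This continuity/connectedness argument is the main technical obstacle.

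\textbf{Step 3: independence of the choices.} Any other lift $\widetilde\alpha'$ equals $g\widetilde\alpha$ for some $g\in\pi_1(M)$, sending $(B_1,B_2)$ to $(gB_1,gB_2)$; adjacency is preserved by deck transformations. A different lift $\widetilde V$ through $\widetilde\alpha$ corresponds either to a deck-translate of the given one, or to another branch of the immersion at $\alpha$; by the no-backtracking hypothesis applied to the cyclic sequence of lozenges associated to $V$, the pair $(B_1,B_2)$ assigned to each such branch is the pair of lozenges consecutive to $\widetilde\alpha$ in that branch's chain, and their adjacency depends only on the chain, not on the chosen sheet.
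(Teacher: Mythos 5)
There are genuine gaps in both directions, and they occur precisely at the points the paper's proof is designed to handle. In Step 1 your ``straight-line along the flow'' homotopy $(x,\tau)\mapsto (x,(1-s)\tau)$ is not a legitimate homotopy of $V$. The fiber $\Theta^{-1}(\Theta(\widetilde\alpha))$ is the whole line $\widetilde\alpha$, and a $g$-invariant neighborhood of $\widetilde\alpha$ in $\widetilde V$ (where $g$ generates the stabilizer) accumulates on all of that line, so the graph functions $\tau_i$ are unbounded near $\Theta(\widetilde\alpha)$; your homotopy collapses the essential closed orbit $\alpha$ to a point, and it is not equivariant under $g$ (which translates nontrivially in the flow direction along $\widetilde\alpha$), so it does not descend to $M$. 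What a correct argument must do is push the closed curve $f(\beta)$ (a power of $\alpha$, taken in the \emph{domain} torus or Klein bottle) off $\alpha$ into the shared half-leaf $Z$ of $\ws(\widetilde\alpha)$ or $\wu(\widetilde\alpha)$, and verify that the pushed curve \emph{closes up} in $M$. That closing-up is exactly where adjacency enters: the deck transformation $g$ associated to $f(\beta)$ preserves the pair $\{B_1,B_2\}$ and hence preserves $Z$. Your purely local picture in $\oo$ never touches this, and it also silently assumes the two local sheets of $\widetilde V$ at $\widetilde\alpha$ are distinct Birkhoff annuli, whereas a single annulus may wrap twice around $\alpha$ or the relevant curve $\beta$ may be one-sided (the cases the paper explicitly flags, where $g$ may correspond to $\alpha^2$ or may swap $B_1$ and $B_2$).

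Step 2 is likewise not a proof. You start from ``$\widetilde V'$ meets $\widetilde\alpha$ at an interior point of the transverse surface,'' but this is exactly what must be (and in the paper is) ruled out: a surface transverse to $\wwp$ through a point $p\in\widetilde\alpha$ would carry a compact curve of $\widetilde W^u(p)\cap\widetilde V'$ returning to $\widetilde\alpha$, forcing a transverse surface to meet the orbit twice, which is impossible. So $\Omega$ does not contain $\Theta(\widetilde\alpha)$, and your counting function $n(x)$ is not locally constant (the surface has boundary, and local constancy fails across the projection of the boundary), so the ``continuity forces adjacency'' step does not go through as stated; you yourself label it the main obstacle. The paper's actual mechanism is different and more robust: once one knows $\widetilde A\cap\widetilde\alpha=\emptyset$, a connected piece with boundary in both $B_1$ and $B_2$ must cross at least one stable and one unstable prong of $\widetilde\alpha$ separating the two non-adjacent lozenges, and the flow crosses a stable prong and an unstable prong of $\alpha$ in opposite directions, contradicting transversality. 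I would encourage you to restructure the proof around (i) the choice of $g$ from a curve in the domain surface and the $g$-invariance of the common half-leaf for the forward direction, and (ii) the two-step ``misses $\widetilde\alpha$, hence crosses prongs of both kinds'' argument for the converse.
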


\begin{proof}{}
Formally we are considering a map $f: T^2$ (or $K$) $\rightarrow
M$ so that the image is the union $V$ of (immersed) Birkhoff
annuli. The homotopy is a homotopy of the map $f$
and it may peel off pieces of $V$ which are glued
together. This occurs for instance if the orbit
$\alpha$ is traversed more than once in $V$, the
image of $f$.
An example of this is a Birkhoff annulus that wraps
around its boundary a number of times.
Another possibility is that many closed curves in
$T^2$ or $K$ may map to $\alpha$ and we are only
modifying the map near one of these curves.

Let $S$ be the domain of $f$ which is
either the torus $T^2$ or the Klein bottle
$K$.
There is a simple closed curve $\beta$ in $S$
and a small neighborhood $E$ of
$\beta$ in $S$ so that $f(E)$ is also
the projection of a small neighborhood
of $\widetilde \alpha$ in $\widetilde V$ to $M$.
Notice that $E$ may be an annulus or Mobius band.
The statement ``$V$ can be homotoped to be
transverse to $\Phi$ in a neighborhood of $\alpha$"
really means that $f|E$ can be homotoped so that
its final image $f'|E$ is transverse to $\Phi$.
We will abuse terminology and keep referring
to this as ``$V$ can be homotoped ...".

Let $g$ a covering translation
associated to $f(\beta)$. It follows that
$g(\widetilde \alpha) = \widetilde \alpha$.
In addition since $g$ is associated to a loop coming
from $S$ (and not just a loop in $V$), then
$g$ preserves $\widetilde V$ and more to the point
here $g$ preserves
the pair $B_1, B_2$.
It may be that $g$ switches $B_1, B_2$, for example
if $\beta$ is one sided in a Mobius band.
This is crucial here: if we took $g$ associated to $\alpha$
for instance, then $g(\widetilde \alpha) = \widetilde \alpha$,
but $g$ could scramble the lozenges with corner
$\widetilde \alpha$ in an unexpected manner and
one could not guarantee that $B_1, B_2$ would be
preserved by $g$.
We also choose $\beta$ so that $f(\beta) = \alpha$.


Suppose first $B_1, B_2$ are adjacent and wlog assume they
are adjacent along a half leaf $Z$ of $\widetilde W^u(\alpha)$.
The crucial fact here is that since $g$ preserves
the pair $B_1, B_2$ then $g$ leaves $Z$ invariant.
%
%
Let $U$ be a neighborhood of $\alpha$ in $M$. Choose it
so the intersection with $f(E)$
is either an annulus or Mobius band (in general only
immersed). Using the image $\pi(Z)$ of the half leaf $Z$
we can homotope the power of $\alpha$ corresponding
to $g$ (that is, corresponding  to $f(\beta)$
as a parametrized loop) away from
$\alpha$
so that its image in $\pi(Z)$ is transverse
to the flow and closes up.
In the universal cover $g$
preserves
the set $B_1, B_2$ then the pushed curve from $\widetilde \alpha$
returns
to the same sector of $\mi - \widetilde W^s(\widetilde \alpha)$
and this curve can be closed up when mapped to $M$.
Once that is done we can also homotope
a neighborhood of $\alpha$ in $V$ as well to be
transverse to $\Phi$.

In the most general situation that neighborhood of $V$ could
be an annulus which is one sided in $M$, then
the push away from $\alpha$ could not close up.
In our situation it may be that this annulus goes around say twice
over $\alpha$ and going once around $\alpha$ sends
the lozenges $B_1, B_2$ to other lozenges. But going
around twice over $\alpha$ (corresponding to $g$)
returns $B_1 \cup B_2$ to itself.
If the neighborhood is a Mobius band, we want to
consider the core curve as it generates the fundamental
group of this neighborhood.
This finishes the proof of the first statement of the lemma.

\vskip .1in
Suppose now that $V$ can be homotoped to be transverse
to $\Phi$ is a neighborhood of $\alpha$.
We use the same setup as in the first part.
Let $U$ be a neighborhood of $\alpha$ in $M$
so that the pulled back neighborhood of $\beta$ in $S$
is either an annulus or Mobius band.
We assume that $V$ can be perturbed near $\alpha$ to $V'$ in $U$,
keeping it
fixed in $\partial U$, and to be transverse
to $\Phi$ in a neighborhood of $\alpha$.
Let $A$ be the the part of $V'$ which is the part of $V$
perturbed near $\alpha$.

Consider all prongs of $(\ws(\widetilde \alpha)
\cup \wu(\widetilde \alpha)) - \widetilde \alpha$.
By way of contradiction we
are assuming that the lozenges $B_1, B_2$ are not
adjacent.  Then there are at least 2 such prongs
as above separating $B_1$ from $B_2$ in
$\mi - \widetilde \alpha$ on either component
of $\mi - (B_1 \cup B_2 \cup \widetilde \alpha)$.
Let $\widetilde A$ be the lift of
$A$ near $\widetilde \alpha$.

We first show that $\widetilde A \cap \widetilde \alpha$
is empty. Suppose not and let $p$ in the intersection.
Since $A$ is transverse to $\Phi$ then $\widetilde A$
is transverse say to $\wlu$ so we follow the intersection
$\widetilde A \cap \widetilde W^u(p)$ from $p$. This projects
to a compact set in $A$, contained in the interior of $A$
as $\widetilde W^u(p)$ does not intersect $\partial \widetilde A$.
This is because $\partial \widetilde A$ is contained
in the union of the lozenges $B_1, B_2$ and they are disjoint
from any prong of $p$.
So the original curve in $\widetilde W^u(p)$
has to return to $\widetilde \alpha$ and looking
at this curve in $\widetilde W^u(\alpha)$ this transverse
curve has to intersect $\widetilde \alpha$ twice, which
makes it impossible to be transverse to the flow.

Since $\widetilde A$ cannot intersect $\widetilde \alpha$
and it has boundaries in $B_1$ and $B_2$ then
it has to intersect at least two prongs from $\widetilde
\alpha$, at least one stable and one unstable prong
in $\widetilde U$. Project to $M$. Then $A$ cannot
be transverse to the flow $\Phi$.
This is because in a stable prong of $\alpha$ the flow
is transverse to
$A$ in one direction and in an unstable prong
of $\alpha$ the flow is transverse to $A$ in the opposite direction.
This finishes the proof of lemma \ref{localtrans}.
\end{proof}

\begin{proposition}{(transverse torus)}{}
\label{pro:transversetorus}
Let $\Phi$ be a pseudo-Anosov flow in $M^3$.
Suppose that $P_i, P_j$ are periodic Seifert
fibered pieces which are adjacent and let
$T$ be a torus in the common boundary of
$P_i, P_j$.
Then we can choose $N(Z_i), N(Z_j)$
neighborhoods of $P_i, P_j$ so that the
components  $T_i, T_j$ of $\partial N(Z_i), \partial N(Z_j)$
isotopic to $T$ are the same set and
this set is transverse to $\Phi$.
\label{transtor}
\end{proposition}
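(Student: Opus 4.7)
My strategy has three stages: first, arrange that the boundary component of $\partial N(Z_i)$ isotopic to $T$ coincides with the corresponding component of $\partial N(Z_j)$, yielding a single Birkhoff torus $T^\ast$; second, use Lemma~\ref{localtrans} to reduce global transversality of $T^\ast$ to a lozenge-adjacency condition at each vertex of its associated chain; third, derive this adjacency from the JSJ property that $P_i$ and $P_j$ have non-isotopic fiber classes on $T$.

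For the first stage, I would use Proposition~\ref{disjspin} to assume the spines $Z_i, Z_j$ are disjoint. The construction of \cite{bafe} endows the boundary components $T_i \subset \partial N(Z_i)$ and $T_j \subset \partial N(Z_j)$ isotopic to $T$ with natural Birkhoff torus structures, whose associated chains of lozenges can be chosen with no backtracking (otherwise the chain could be shortened, contradicting the fact that $N(Z_i)$ is a representative of $P_i$). Using the lemma immediately preceding Proposition~\ref{disjspin} (two Birkhoff annuli lifting to the same lozenge are flow-homotopic in their interiors), together with isotopies supported in flow-tubes, I would adjust the choices of $N(Z_i)$ and $N(Z_j)$ so that $T_i$ and $T_j$ coincide as a single $\pi_1$-injective Birkhoff torus $T^\ast$ in $M$, bounding $N(Z_i)$ and $N(Z_j)$ on opposite sides.

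For the second stage, lift $T^\ast$ to a plane $\widetilde{T^\ast} \subset \mi$; this lift sits inside a bi-infinite chain of lozenges $\{\mathcal C_n\}_{n \in \mathbb Z}$, with consecutive lozenges sharing vertices at lifts $\widetilde\alpha_n$ of the vertical periodic orbits of $T^\ast$. Away from these vertical orbits, a Birkhoff torus is already transverse to $\Phi$. By Lemma~\ref{localtrans}, $T^\ast$ can be homotoped to be transverse near $\alpha_n$ iff $\mathcal C_{n-1}$ and $\mathcal C_n$ are adjacent lozenges. Assuming this adjacency at every vertex, I would piece together the local transverse perturbations using flow-cutoff functions to obtain a torus globally transverse to $\Phi$ and isotopic (rel the rest of the manifold structure) to $T^\ast$.

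The main obstacle is the third stage: verifying the lozenge-adjacency at every $\widetilde\alpha_n$. Suppose for contradiction that at some $\widetilde\alpha$, the two lozenges $B_1$ (on the $N(Z_i)$-side of $\widetilde{T^\ast}$) and $B_2$ (on the $N(Z_j)$-side) are not adjacent. Via Remark~\ref{rk.lozengeannulus}, $B_1$ corresponds to a Birkhoff annulus $A_1 \subset T^\ast$ cabling $\alpha$ to another vertical periodic orbit, with the free-homotopy relation identifying (a power of) $\alpha$ with (a power of) a fiber of $P_i$; so $A_1$ reads the $P_i$-fiber slope $h_i$ on $T$. Symmetrically $B_2$ records $h_j$. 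The non-adjacency of $B_1, B_2$ means that as $\widetilde{T^\ast}$ crosses $\widetilde\alpha$ it skips past at least one prong of $\widetilde\alpha$, and combinatorial tracking of the generator $g$ of $\mathrm{Stab}(\widetilde\alpha)$ through the closed loop produced in $T^\ast$ forces the two slopes $h_i, h_j$ to coincide as isotopy classes in $T \cong T^\ast$. This contradicts the JSJ property that distinct Seifert pieces meeting along $T$ have non-isotopic fiber classes (precisely the fact invoked in Part~I of the proof of Proposition~\ref{disjspin}). The delicate point will be turning the geometric statement ``$B_1, B_2$ are non-adjacent'' into the precise isotopy-class equality of $h_i$ and $h_j$ on $T$; I expect this to require carefully unwinding how the two Birkhoff annuli at $\widetilde\alpha$ specify transverse slopes via the action of $g$ on the prongs at $\widetilde\alpha$.
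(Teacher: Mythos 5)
Your overall shape (reduce transversality to lozenge adjacency at each corner via Lemma~\ref{localtrans}) matches the paper, but the step that actually carries the proof is missing, and the reduction itself is set up on a false premise. Stage~1 cannot work as stated: the Birkhoff torus in $Z_i$ homotopic to $T$ and the one in $Z_j$ homotopic to $T$ have disjoint sets of corner orbits (the vertical orbits of $P_i$ versus those of $P_j$); if they could be made to coincide as a single Birkhoff torus $T^\ast$, their corners would coincide and the regular fibers of $P_i$ and $P_j$ would have common powers, which is impossible. The paper keeps the two Birkhoff tori $T_1\subset Z_i$ and $T_2\subset Z_j$ separate throughout, and the fact that they give \emph{two distinct} bi-infinite chains of lozenges $\mathcal B_1\neq\mathcal B_2$, both invariant under $G=\pi_1(T)\cong \mathbf{Z}^2$, is precisely the input for the adjacency you need: Proposition~5.5 of \cite{bafe} says two such chains must lie in a common scalloped region, one $s$-scalloped and one $u$-scalloped, and ``$s$-scalloped'' means exactly that consecutive lozenges of $\mathcal B_1$ are adjacent along unstable sides. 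Your Stage~3 tries instead to get adjacency from a local analysis at a single corner $\widetilde\alpha$, but both lozenges $B_1,B_2$ there belong to the chain of one and the same Birkhoff torus, whose corners are all vertical orbits of the \emph{same} Seifert piece; there is no mechanism by which non-adjacency at $\widetilde\alpha$ forces the $P_i$- and $P_j$-fiber slopes on $T$ to agree, and you acknowledge you have not produced one. This is the genuine gap: without the global $\mathbf{Z}^2$-invariance/scalloped-region argument (or a substitute), the adjacency hypothesis of Lemma~\ref{localtrans} is unverified. Note that the paper explicitly warns that for a general periodic piece the boundary of $N(Z_i)$ \emph{cannot} be isotoped to be transverse, so adjacency really does require using both sides of $T$.

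A second omission: even after the local perturbations, the resulting transverse torus is a priori only immersed, since $T_1$ may traverse Birkhoff annuli of $Z_i$ with multiplicity. The paper devotes a separate Claim to proving the perturbed torus $T_1'$ can be chosen embedded, via the fat-graph structure of $\eta_i(Z_i)$ (a repeated edge would make $T$ homotopic to a proper multiple of a JSJ torus). Embeddedness is indispensable, because the identification of $T_1'$ with $T$ uses ``embedded, incompressible and homotopic implies isotopic'' in an irreducible manifold. Finally, the two transverse tori $T_1'$ and $T_2'$ are matched up only at the very end, by observing that their lifts cross exactly the same orbits, so they cobound a product flow region which one collapses along flow lines --- not by any preliminary identification of Birkhoff tori.
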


\begin{proof}{}
By proposition \ref{disjspin} we may assume that
$N(Z_i), N(Z_j)$ are disjoint. Since $Z_i$ is a spine for $P_i$,
the torus $T$ is homotopic to a Birkhoff torus $T_1$ contained
in $Z_i$.
We assume that $T_1$ has no backtracking.
Quite possibly
$T_1$ is only an immersed torus,
for example there may be Birkhoff annuli in $Z_i$ which
are covered twice by $T_1$.
The torus $T_1$ lifts to a properly embedded plane
$\widetilde T_1$ which intersects a unique
bi-infinite  chain of lozenges
${\mathcal B}_1$. With appropriate choices we may
assume that $\pi_1(T) \cong {\bf Z}^2$ corresponds to
a subgroup $G$ of covering translations leaving
${\mathcal B}_1$ invariant.
The corners of the lozenges in ${\mathcal B}_1$
project to closed orbits in $T_1$. These have powers
which are freely homotopic to the regular fiber in
$P_i$ because $P_i$ is a periodic Seifert
piece. Similarly $P_j$ produces a
Birkhoff torus $T_2$ homotopic to $T$ with $T_2$
contained in $Z_j$ and a lift $\widetilde T_2$
contained in a bi-infinite chain of lozenges
${\mathcal B}_2$, which is also invariant under the same
$G$. The corners of the lozenges in ${\mathcal B}_2$
project to closed orbits of the flow with powers
freely homotopic to a regular fiber in $P_j$.
If these two collections of corners are the
same, they have the same isotropy group, which
would imply the fibers in $P_i, P_j$ have common
powers, impossible as seen before.

We conclude that ${\mathcal B}_1, {\mathcal B}_2$ are
distinct and both invariant under $G \cong {\bf Z}^2$.
This is an exceptional situation and proposition
5.5 of \cite{bafe} implies that both
chains of lozenges are contained in a scalloped
region and one of them (say ${\mathcal B}_1$)
is s-scalloped and
the other (${\mathcal B}_2$) is u-scalloped.
The lozenges in the s-scalloped region all intersect
a common stable leaf, call it $E$ and the corners
of these lozenges are in stable leaves in
the boundary of the scalloped region.

Let then $\alpha$ be a periodic orbit in $T_1$ with
lift $\widetilde \alpha$ to $\widetilde T_1$
and lozenges $B_1, B_2$ of ${\mathcal B}_1$ which
have corner $\widetilde \alpha$. Then $B_1,
B_2$ are adjacent along an unstable leaf.
Further if $f(\beta)$ is the curve as in the
proof of the
previous lemma, which is homotopic to a power
of $\alpha$, then $f(\beta)$ is in $T$ and so the
covering translation associated to $f(\beta)$
preserves $\widetilde T_1$ and hence also
$B_1, B_2$. By the previous lemma we can homotope
$T_1$ slightly near $\alpha$
to make it transverse
to the flow near $\alpha$. When lifting to
the universal cover, the corresponding lift
of the perturbed torus will not intersect $\widetilde \alpha$,
but will intersect all orbits in the half leaf
of $\widetilde W^u(\widetilde \alpha)$ which
is in the common boundary of $B_1$ and $B_2$.
Do this for all closed orbits of $\Phi$ in $T_1$.
Notice we are pushing $T_1$ along unstable leaves.
Consider now a lozenge $B_1$ in ${\mathcal B}_1$ and
$A$ the Birkhoff annulus contained in the closure
of $\pi(B_1)$ which is contained in $T_1$.
Both boundaries have been pushed away along
unstable leaves. The unstable leaves are on the
same side of the Birkhoff annulus $A$.
Therefore
one can also push in the same direction the remainder
of $T_1$ $-$ to make it disjoint from $Z_i$. This produces
a new torus $T'_1$ satisfying

\begin{itemize}

\item $T'_1$ is a contained in 
a small neighborhood of $Z_i$ and is transverse to the
flow $\Phi$,

\item
$T'_1$ is disjoint from every $Z_k$ (including $Z_i$),

\item
There is a fixed lift $\widetilde T'_1$ which is
invariant under $G$ and that it intersects
exactly the orbits in the scalloped region.

\end{itemize}

The much more subtle property to prove is the following:

\vskip .1in
\noindent
{\bf Claim} $-$ $T'_1$ can be chosen embedded.
\vskip .03in

To prove this claim we fix the representative
$N(Z_i)$ of $P_i$ and a Seifert fibration $\eta_i: P_i \rightarrow \Sigma_i$
so that $Z_i$ is a union of fibers: each
Birkhoff annulus $A$ of $Z_i$ is a union
of fibers and it is embedded in the interior.
Since $M$ is orientable, the orbifold $\Sigma_i$ is a surface
with a finite number of singular points, which are the projections by $\eta_i$ of
the vertical periodic orbits. Moreover, $\eta_i(Z_i)$ is a {\em fat graph,} i.e.
is a graph embedded in $\Sigma_i$ which is a deformation retract of $\Sigma_i$.
One can furthermore select $\eta_i$ so that the stable and unstable vertical annuli in
$N(Z_i)$ are Seifert saturated, ie. project to arcs in $\Sigma_i$ with one boundary
in $\Sigma_i$, the other being a vertex of $\eta_i(Z_i)$. Finally, one can assume that
the retraction $r: \Sigma_i \rightarrow \eta_i(Z_i)$ is constant along stable and unstable arcs,
mapping each of them on the vertex of $\eta_i(Z_i)$ lying in their boundary.

Since the Birkhoff annuli in $Z_i$ are transverse to the flow,
one can distinguish the two sides of every edge of $\eta_i(Z_i)$, one where
the flow is ``incoming", and the other ``outgoing". The stable arcs are contained
in the incoming side, whereas the unstable arcs are contained in the outgoing side.
It follows that
the set of boundary components of $\Sigma$
can be partitioned in two subsets so that for every
edge $e$ of $\eta_i(Z_i)$, the two sides of $e$ in $\Sigma$ lie
in different sets of this partition.

The immersed Birkhoff torus $T_1$ is a sequence of Birkhoff annuli
$A_1$, $A_2$, ... , $A_k$, $A_{k+1} = A_1$. It corresponds to a sequence $e_1$, ... , $e_k$, $e_{k+1} = e_1$ of edges
in $\eta_i(Z_i)$. As described above, since ${\mathcal B}_1$ is $s$-scalloped,
$T'_1$ is obtained by pushing every $A_i$ along the unstable annuli so that
$T'_i$ intersects no stable annulus. It follows that we always push on the ``outgoing" side.
Let $c_i$ be the unique segment in the outgoing boundary of $\Sigma_i$ whose image by the
retraction $r$ is $e_i$: it follows that the sequence of segments $c_1$, $c_2$, ... , $c_k$ describe
an outgoing component $C$ of $\partial\Sigma_i$. In other words, $\eta_i(T_1)$
is the retraction of a boundary component of $N(Z_i)$.

Hence, if we have $e_i = e_j$ for some $i < j$,
we have $e_{i+1} = e_{j+1}$, and so on, so that the sequence $e_1$, ... , $e_k$
is the repetition of a single loop in $\eta_i(Z_i)$. Then, $T_1$ is homotopic to
the boundary component $\eta_i^{-1}(C)$ of $N(Z_i)$ repeated at least twice.
But it would mean that the JSJ torus $T$ is homotopic to the JSJ torus $\eta_i^{-1}(C)$
repeated several times, which is a clear contradiction.

Therefore, $e_1$, ... , $e_k$ is a simple loop: $T_1$ can pass through a Birkhoff annulus
in $Z_i$ at most once. Then the homotopy from $T_1$ to $T'_1$
does the following: the interiors of the Birkhoff annuli are
homotoped to an embedded collection. The neighborhoods of
the periodic orbits also satisfy that.
We conclude that $T'_1$ can be chosen embedded.
This finishes the proof of the claim.
\vskip .1in

As $T'_1$ is embedded and homotopic to $T$ and $M$ is
irreducible, then $T'_1$ is in fact isotopic to $T$
\cite{He}.
The same is true for $T_2$ to produce $T'_2$ with
similar properties.
Notice that $\widetilde T'_1$
and $\widetilde T'_2$ intersect exactly the
same set of orbits in $\mi$. Hence their projections
$T'_1, T'_2$ to $M$ bound a closed region $F$ in $M$ with
boundary $T'_1 \cup T'_2$, homeomorphic to $T'_1 \times
[0,1]$ and so that the flow is a product in $F$.
We can then isotope $T'_1$ and $T'_2$ along flow
lines to collapse them together.

In this way we produce representatives $N(Z_i), N(Z_j)$
of $P_i, P_j$ respectively; with boundary components
$T'_1$, $T'_2$ isotopic to $T$ (they are the same set)
and transverse to the flow $\Phi$.
This finishes the proof of proposition \ref{transtor}.
\end{proof}

\begin{proposition}{(good position)}{}
Suppose that $\Phi$ is a totally periodic
pseudo-Anosov flow in a graph manifold $M$.
Let $P_i$ be the Seifert fibered spaces in
the torus decomposition of $M$.
Then there are spines $Z_i$ made up of Birkhoff
annuli
for $P_i$
and compact neighborhoods $N(Z_i)$ so that:

\begin{itemize}

\item
$N(Z_i)$ is isotopic to $P_i$,

\item
The union of $N(Z_i)$ is $M$ and the interiors of
$N(Z_i)$ are pairwise disjoint,

\item
Each $\partial N(Z_i)$ is a union of tori
in $M$ all of which are transverse to the flow
$\Phi$.
\end{itemize}

\label{goodposition}
\end{proposition}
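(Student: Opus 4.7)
The plan is to assemble the three preceding propositions. By Proposition \ref{disjspin} we choose pairwise disjoint spines $Z_i$ for every Seifert piece $P_i$; the hypothesis that $\Phi$ is totally periodic guarantees that every piece of the JSJ decomposition of $M$ is periodic, so each piece really contributes such a spine. For any JSJ torus $T$, denote by $P_i$, $P_j$ the two (not necessarily distinct) periodic pieces it bounds. Proposition \ref{transtor} then produces an embedded torus $T'$, isotopic to $T$ and transverse to $\Phi$, obtained by pushing Birkhoff tori $T_1 \subset Z_i$ and $T_2 \subset Z_j$ into a common transverse position inside arbitrarily small tubular neighborhoods of $Z_i$ and $Z_j$, followed by a flow-product isotopy that collapses $T'_1$ and $T'_2$ together.

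To conclude, we must carry this construction out simultaneously for every JSJ torus of $M$ and verify that the resulting transverse tori are pairwise disjoint. For a fixed piece $P_i$ with Seifert projection $\eta_i: P_i \to \Sigma_i$, its various boundary tori correspond to distinct boundary components of $\Sigma_i$. Recall from the proof of Proposition \ref{transtor} that each edge $e$ of the fat graph $\eta_i(Z_i) \subset \Sigma_i$ has one ``incoming'' and one ``outgoing'' side with respect to $\Phi$, and each such side of $e$ lies in exactly one boundary component of $\Sigma_i$. The perturbation that yields a transverse torus from a Birkhoff torus $T_1$ in $Z_i$ is performed along a single specified side (determined by whether the associated lozenge chain is $s$- or $u$-scalloped) of each edge traversed by $T_1$. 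Since distinct boundary components of $\Sigma_i$ occupy distinct sides of the edges of $\eta_i(Z_i)$, the perturbations needed for different boundary tori of $P_i$ can be localized in pairwise disjoint neighborhoods inside $N(Z_i)$. Across different pieces the perturbations are automatically disjoint thanks to Proposition \ref{disjspin}.

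One thus obtains a pairwise disjoint family $\{T'_k\}$ of embedded tori transverse to $\Phi$, one isotopic to each JSJ torus $T_k$. Cutting $M$ along $\bigcup_k T'_k$ yields finitely many components, each containing exactly one spine $Z_i$. Take $N(Z_i)$ to be the closure of the component containing $Z_i$. Then by construction the $N(Z_i)$ cover $M$, have pairwise disjoint interiors, and each $\partial N(Z_i)$ is a union of transverse tori from $\{T'_k\}$. That $N(Z_i)$ is isotopic to $P_i$ follows from the uniqueness, up to ambient isotopy in an irreducible Haken manifold, of the minimal collection of disjoint incompressible tori splitting $M$ into Seifert-fibered and atoroidal pieces, applied to the incompressible family $\{T'_k\}$.

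The principal obstacle is precisely this simultaneity step: ensuring that the transverse tori furnished by Proposition \ref{transtor} at the different JSJ tori can be produced pairwise disjoint from a single consistent choice of spines and Seifert fibrations. The ribbon-graph structure of $\eta_i(Z_i)$ together with the incoming/outgoing dichotomy of its edge sides—already the key combinatorial input in the embeddedness claim within the proof of Proposition \ref{transtor}—is exactly what makes all the local perturbations compatible.
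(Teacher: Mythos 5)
Your proof is correct and follows essentially the same route as the paper: choose pairwise disjoint spines via Proposition \ref{disjspin}, then apply Proposition \ref{transtor} at each JSJ torus, observing that the resulting adjustments are localized and mutually non-interfering so they can all be performed simultaneously. The paper simply states that the tori can be adjusted ``one by one'' without affecting the other components, whereas you spell out via the incoming/outgoing sides of the fat-graph edges why the perturbations for different boundary tori stay disjoint — a harmless elaboration of the same argument.
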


\begin{proof}{}
If $P_i$ and $P_j$ are adjoining, the previous proposition
explains how to adjust the corresponding components
of $\partial N(Z_i)$ and $\partial N(Z_j)$ to satisfy
the 3 properties for that component without changing
any of the $\{ Z_k \}$ or the other components
of $\partial N(Z_i), \partial N(Z_j)$.
We can adjust these tori in boundary
of the collection $\{ N(Z_i) \}$ one by one.
This finishes the proof. This actually shows
that any component of $M - \cup \ Z_i$ is
homeomorphic to $T^2 \times [0,1]$.
\end{proof}

This proposition shows that given any boundary torus
of (the original) $N(Z_i)$ it can isotoped to
be transverse to $\Phi$.
Fix a component $\widetilde Z_i$ of the inverse
image of $Z_i$ in $\mi$ and let $B_1$ be a lozenge
with a corner $\widetilde \alpha$
in $\widetilde Z_i$ and so that
$B_1$ contains a lift $\widetilde A$
of an (open) Birkhoff
annulus $A$  in $Z_i$.
Let $\alpha$ be the projection of $\widetilde \alpha$
to $M$.
The proof of
proposition \ref{transtor} shows that
for each side of $A$ in $M$ there is a torus
which is a boundary component of a small neighborhood
$N(Z_i)$ and which contains an annulus very close
to $A$. Going to the next Birkhoff annulus on
each torus beyond $\alpha$, proposition \ref{transtor}
shows that the corresponding lozenge $B$ is adjacent
to $B_1$. Hence we account for the two lozenges
adjacent to $B_1$. This can be iterated.
This shows that for any corner $\widetilde \alpha$,
\textit{every} lozenge with corner  $\widetilde \alpha$
contains the lift of the interiof of a Birkhoff annulus
in $\partial N(Z_i)$.
Hence there are no more lozenges with a corner
in $\widetilde Z_i$. Hence the pruning step
done in section 7 of \cite{bafe} is inexistent:
the collection of lozenges which are connected
by a chain of lozenges to any corner in $\widetilde
Z_i$ is already associated to $N(Z_i)$.
This shows the assertion above.

\section{Itineraries}
\label{sec:itinerary}
In the previous section, we proved that $M$ admits a JSJ decomposition so that every Seifert piece $P_i$ is
a neighborhood $N(Z_i)$ of the spine $Z_i$ and whose boundary is a
union of  tori transverse to $\Phi$. Denote by $T_1$, ... , $T_k$ the
collection of all these tori:
for every $k$, there is a Seifert piece $P_i$ such that $\Phi$ points outward $P_i$ along $T_k$, and another piece $P_j$
such that $\Phi$ points inward $P_j$ along $T_k$ (observe that we may have $i=j$, and also $P_i$ and $P_j$ may have several
tori $T_k$ in common).
It follows from the description of $N(Z_i)$ that the only full orbits
of $\Phi$ contained in $N(Z_i)$ are closed orbits and they
are the vertical orbits of $N(Z_i)$.

From now on in this section, we fix one such JSJ decomposition of $M$
associated to the flow $\Phi$.
In this section when we consider a Birkhoff annulus without any further
specification we are referring to a Birkhoff annulus in one
of the fixed spines $Z_i$. In the same way a
\textit{Birkhoff band} is a lift to $\mi$ of the interior of one
of the fixed Birkhoff annuli.

Let $\gsg(T_k)$, $\gug(T_k)$ be the foliations induced on $T_k$ by $\ls$, $\lu$. It follows from the previous section (and also by the Poincar\'e-Hopf index theorem)
that $\gsg(T_k)$ and $\gug(T_k)$ are regular foliations, i.e. that the orbits of $\Phi$ intersecting $T_k$ are regular. Moreover, $\gsg(T_k)$ admits
closed leaves, which are the intersections between $T_k$ and the stables leaves of the vertical
periodic orbits contained in $N(P_j)$.
Observe that all the closed leaves of $\gsg(T_k)$ are obtained in this way:
it follows from the fact that $T_k$ is an approximation of an union of Birkhoff annuli.

Hence there is a cyclic order on the set of closed leaves of $\gsg(T_k)$, two successive closed leaves for this order are the boundary components of an annulus in $T_k$ which can be pushed
forward along the flow to a Birkhoff annulus contained in the spine $Z_j$. We call such a region of $T_k$ an \textit{elementary $\gsg$-annulus of $T_k$.}

Notice that an elementary annulus is an open subset of the respective torus $T_k$ $-$ the
boundary closed orbits are not part of the elementary annulus.

Similarly, there is a cyclic order on the set of closed leaves of $\gug(T_k)$ so that the region between two successive closed leaves (an  \textit{elementary $\gug(T_k)$-annulus}) is obtained by
pushing forward along $\Phi$ a Birkhoff annulus appearing in $Z_i$.
The regular foliations $\gsg(T_k)$ and $\gug(T_k)$ are transverse one to the other and
their closed leaves are not isotopic. Otherwise $P_i, P_j$ have Seifert fibers
with common powers, contradiction.
Hence none of these foliations
admits a Reeb component. It follows that leaves in an elementary $\gsg(T_k)$ or $\gug(T_k)$-annulus
spiral from one boundary to the other boundary so that the direction of ``spiralling'' is the
opposite at both sides. It also follows that the length of curves in one leaf of these foliations not intersecting a closed leaf of the other foliation is uniformly bounded from above. In other words:

\begin{lemma}
\label{le:boundabove}
There is a positive real number $L_0$ such that any path contained in a leaf of $\gug(T_k)$ (respectively $\gsg(T_k)$) and contained in an elementary $\gsg(T_k)$-annulus (respectively $\gug(T_k)$-annulus) has length $\leq L_0$. \hfill $\square$
\end{lemma}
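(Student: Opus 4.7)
The plan is to reduce to a single elementary annulus on a single torus and then obtain the bound by a Poincar\'e--Bendixson-type argument on the closed annulus. Since there are only finitely many JSJ tori $T_k$ and each carries only finitely many closed leaves of $\gsg(T_k)$ and $\gug(T_k)$, there are finitely many elementary annuli of each type in total. It therefore suffices to bound the length of $\gug(T_k)$-leaf segments contained in a single elementary $\gsg(T_k)$-annulus $A$; the other case in the statement (paths in $\gsg(T_k)$-leaves inside an elementary $\gug(T_k)$-annulus) is symmetric. Let $\bar{A}$ denote the closure of $A$ in $T_k$: it is a closed annulus whose boundary consists of two closed leaves of $\gsg(T_k)$, to which $\gug(T_k)$ is transverse by the preceding discussion.

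I would first establish that every leaf of the restricted foliation $\gug(T_k)|_{\bar{A}}$ is a compact arc with both endpoints on $\partial\bar{A}$. A closed leaf of $\gug(T_k)|_{\bar{A}}$ would be a closed curve in $\bar{A}$ tangent to $\gug(T_k)$, hence a closed leaf of $\gug(T_k)$ entirely contained in $\bar{A}$; but closed leaves of $\gug(T_k)$ are by hypothesis non-isotopic to closed leaves of $\gsg(T_k)$, hence non-isotopic to the core of $\bar{A}$, so none can be contained in $\bar{A}$. Since $\gug(T_k)$ has no Reeb components, a standard Poincar\'e--Bendixson argument for a regular foliation on an annulus transverse to the boundary shows that any leaf of $\gug(T_k)|_{\bar{A}}$ failing to exit $\bar{A}$ in finite time would have to accumulate on a closed leaf of $\gug(T_k)|_{\bar{A}}$, contradicting the previous observation.

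To convert this qualitative statement into a uniform length bound, define $L : \partial\bar{A} \to [0,+\infty)$ by setting $L(x)$ equal to the arc length of the $\gug(T_k)|_{\bar{A}}$-leaf starting at $x$, measured in any fixed Riemannian metric on $M$. By continuous dependence of leaves of a regular foliation transverse to a compact boundary, the map sending $x$ to the leaf through $x$ is continuous in the Hausdorff topology on compact subsets of $\bar{A}$, and arc length is continuous on this family; hence $L$ is continuous on the compact set $\partial\bar{A}$ and therefore bounded. Taking the maximum over all tori $T_k$ and both types of elementary annuli yields the desired uniform constant $L_0$. The principal technical point is the continuity of $L$: a priori the winding number of the leaf through $x$ around $\bar{A}$ could jump as $x$ varies, producing a discontinuity. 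What rules this out is the combination of transversality at $\partial\bar{A}$ and the absence of interior closed leaves of $\gug(T_k)|_{\bar{A}}$ established in the previous step, since no leaf can wind arbitrarily many times around $\bar{A}$ without accumulating on an (inexistent) closed leaf.
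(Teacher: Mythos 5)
Your overall route is the one the paper has in mind (the paper offers no written proof, only the preceding structural discussion), and the reduction to a single elementary annulus, the exclusion of interior closed leaves of $\gug(T_k)|_{\bar A}$ via non-isotopy with the core, and the final compactness step are all correct. Two points deserve attention, one of which is a genuine gap as stated. The Poincar\'e--Bendixson step is not valid verbatim in the regularity class at hand: $\gsg(T_k)$ and $\gug(T_k)$ are only continuous foliations, and for a $C^0$ foliation of a compact annulus transverse to the boundary, a half-leaf trapped in $\bar A$ need not accumulate on a closed leaf --- its limit set could be an exceptional (Denjoy-type) minimal set sitting in the interior of $\bar A$ and containing no closed leaf, so your ``contradicting the previous observation'' does not yet bite. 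The fix is to use what the paper establishes just before the lemma: the complement of the closed leaves of $\gug(T_k)$ is a finite union of elementary $\gug$-annuli (each flowing to an elementary Birkhoff annulus), in each of which every leaf spirals from one boundary closed leaf to the other. Hence the closure of \emph{every} leaf of $\gug(T_k)$ contains a closed leaf of $\gug(T_k)$; since the limit set of a trapped half-leaf is a compact $\gug$-saturated subset of the open annulus $A$ (it cannot meet $\partial\bar A$ by transversality), it would then contain a closed $\gug$-leaf inside $\bar A$, which is what your step 2 forbids. With this patch, all leaves of $\gug(T_k)|_{\bar A}$ are indeed compact arcs.

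The second point is your continuity argument for $L$. ``Continuous dependence of leaves in the Hausdorff topology'' is not a general property of foliations --- it is exactly what can fail when leaf lengths blow up --- and the winding-number remark at the end is circular as written. The clean justification is that a compact arc transverse to $\partial\bar A$ at both endpoints has trivial holonomy, so a long flow box around it shows nearby leaves are arcs of nearby length; this uses the already-established fact that \emph{all} leaves are compact arcs. Alternatively you can bypass continuity entirely: if lengths were unbounded, midpoints of arbitrarily long arcs would accumulate on a point of $\bar A$ both of whose half-leaves are trapped in $\bar A$ (by the flow-box lower semicontinuity of ``length before exiting''), contradicting the compact-arc statement. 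Finally, a cosmetic remark: when $\gsg(T_k)$ has a single closed leaf the closure of $A$ is all of $T_k$, so one should work with the annulus obtained by cutting $T_k$ along that leaf rather than with the closure.
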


\vskip .1in
\noindent
{\bf {The sets $\mathcal T$ and $\mathcal T_\sharp$}} $-$
Let $\mathcal T$ be the collection of all the lifts in $\widetilde{M}$ of the tori $T_k$. Every
element of $\mathcal T$ is a properly embedded
plane in $\mi$. We will also abuse notation and denote by $\mathcal T$ the union of the elements of $\mathcal T$.
Let $\Delta$ be the union of the lifts of the vertical orbits of $\Phi$.
Finally let $\mathcal T_{\sharp} = \mathcal T \cup \Delta$.
\vskip .08in

Observe that there exists a positive real number $\eta$ such that the $\eta/2$-neighborhoods of the
$T_k$ are pairwise disjoint. Therefore:
\begin{equation}
\label{eq:TT'}
\forall \wt, \wt'\in {\mathcal T}, \;\; \wt \neq \wt' \Rightarrow d(\wt, \wt') \geq \eta
\end{equation}

\noindent
Here $d(\wt,\wt')$ is the minimum distance between a point in $\wt$ and a point in $\wt'$.

What we have proved concerning the foliations $\gsg(T_k)$, $\gug(T_k)$ implies the following: for every $\wt \in \cT$, the restrictions
to $\wt$ of $\wls$ and $\wlu$ are foliations by lines, that we denote by $\widetilde{\gsg}(\wt)$,
$\widetilde{\mathcal G}^u(\wt)$. These foliations are both product, i.e. the leaf space of each of them is homeomorphic to
the real line. Moreover, every leaf of $\widetilde{\mathcal G}^s(\wt)$ intersects every leaf of $\widetilde{\mathcal G}^u(\wt)$ in one
and only one point. Therefore, we have a natural homeomorphism $\wt \approx {\mathcal H}^s(\wt) \times {\mathcal H}^u(\wt)$,
identifying every point with the pair of stable/unstable leaf containing it (here, ${\mathcal H}^{s,u}(\wt)$ denotes the leaf space of $\widetilde{\mathcal G}^{s,u}(\wt)$).

\vskip .05in
\noindent
{\bf {Bands and elementary bands}} $-$
Some leaves of $\widetilde{\mathcal G}^{s,u}(\wt)$ are lifts of closed leaves: we call them \textit{periodic leaves}. They cut $\wt$ in bands, called (stable or unstable) \textit{elementary bands},
which are lifts of elementary annuli (cf. fig. \ref{fig:bands}). Observe that the intersection between a stable elementary
band and an unstable elementary band is always non-trivial: such an intersection is called a \textit{square}. Finally,
any pair of leaves $(\ell_1, \ell_2)$ of the same foliation $\widetilde{\mathcal G}^s(\wt)$ or $\widetilde{\mathcal G}^u(\wt)$ bounds a region in $\wt$
that we will call a \textit{band} (elementary bands defined above is in particular a special type of band).
A priori bands and elementary bands can be open, closed or ``half open" subsets of
$\wt$.

\begin{remark}
\label{rk:orientons}
{\em We arbitrarily fix a transverse orientation of each foliation $\gsg(T_k)$, $\gug(T_k)$. It induces (in a $\pi_1(M)$-equivariant way) an orientation on each leaf space ${\mathcal H}^{s,u}(\wt)$. Since every leaf of $\widetilde{\mathcal G}^u(\wt)$ is
naturally identified with ${\mathcal H}^{s}(\wt)$, the orientation of ${\mathcal H}^{s}(\wt)$ induces an orientation on every leaf of $\widetilde{\mathcal G}^u(\wt)$.
When one describes successively the periodic leaves of $\widetilde{\mathcal G}^u(\wt)$, this orientation alternatively coincide and not with the orientation induced
by the direction of the flow.
This is because such leaves are lifts of closed curves isotopic to periodic orbits.}

\end{remark}

\begin{figure}
\centeredepsfbox{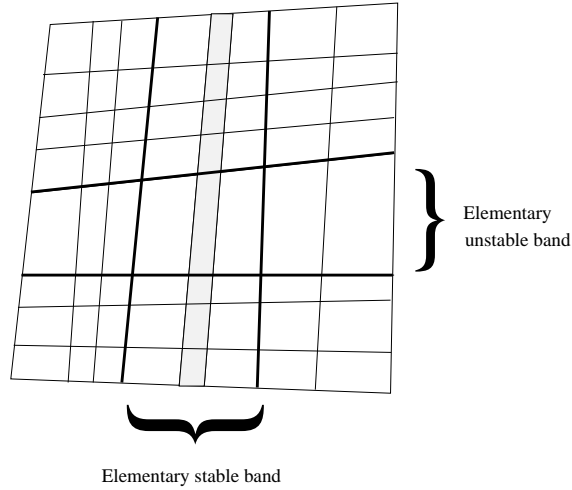}
\caption{
Bands and elementary bands. Nearly vertical lines are leaves of $\widetilde{\mathcal G}^s(\wt)$, and nearly
horizontal lines are leaves of $\widetilde{\mathcal G}^u(\wt)$. Thicker lines are periodic leaves.
The shaded region is a band which is not an elementary band.}
\label{fig:bands}
\end{figure}

For every $\tilde{x}$ in $\mi$, let $(\wt_1(\tilde{x}), ... , \wt_n(\tilde{x}), ...)$ be the list of the elements
of $\cT$ successively met by the positive $\wwp$-orbit of $\wx$ (including an initial $\wt$ if $\wx$ is contained in an element $\wt$ of $\cT$).
Observe that this sequence can be finite, even empty: it happens precisely when the positive orbit remains trapped in a connected component
of $\mi \setminus \cT$, i.e. the lift of a Seifert piece $P_i$. In this case, the projection of the orbit lies in the stable leaf of a vertical
periodic orbit $\theta$ of $P_i$. In other words, $\wx$ lies in $\wws(\alpha)$ where $\alpha$ is a lift of $\theta$. In that case,
we denote by $I^+(\wx)$ the sequence $(\wt_1(\tilde{x}), ... , \wt_n(\tilde{x}), \alpha, \alpha, ...)$, where $\wt_n(\wx)$ is the last element of $\cT$
intersecting the positive $\wwp$-orbit of $\tilde{x}$, and all the following terms are all equal to $\alpha$. We say then that $I^+(\wx)$ is \textit{finite.}
In the other case, i.e. when the sequence $(\wt_1(\tilde{x}), ... , \wt_n(\tilde{x}), ...)$ is infinite,
$I^+(\wx)$ will denote this infinite sequence.
In both situations, $I^+(\wx)$ is called the \emph{positive itinerary} of $\wx$.

Similarly, one can define the \textit{negative itinerary} $I^-(\wx)$ has the sequence of elements of $\cT$ successively crossed
by the negative orbit of $\wx$. Once more, such a sequence can be finite if $\wx$ lies in the unstable leaf of the lift of a periodic vertical orbit
$\beta$, in which case we repeatedly add this information at the end of the sequence. Actually, we consider $I^-(\wx)$ as a sequence indexed
by $0$, $-1$, $-2$, ...

\vskip .15in
\noindent
{\bf {Total itinerary and itinerary map}} $-$
The sequence
$I^-(\wx)$ together with $I^+(\wx)$ defines
a sequence indexed by $\mathbb Z$ called the \textit{total itinerary,} denoted by $I(\wx)$.
This defines a map $I: \mi \rightarrow \cT^{\bf Z}_\sharp$ called
the \textit{itinerary map}.


\subsection{Characterization of orbits by their itineraries}

A very simple but crucial fact for the discussion here is the following:
if $\widetilde T$ is an element of $\cT$, then $\widetilde T$ is a properly embedded plane
transverse to $\wwp$. Hence it separates $\mi$ and intersects an arbitrary orbit
of $\wwp$ at most once.

\begin{lemma}
\label{le:memefeuille}
Let $\wt$ be an element of $\cT$. Let $\wx$, $\wy$ be two elements of $\wt$ such that $\wws(\wx) = \wws(\wy)$. Then
$I^+(\wx)=I^+(\wy)$.
\end{lemma}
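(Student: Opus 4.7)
The plan is to prove $I^+(\wx) = I^+(\wy)$ by induction on $n$, maintaining the invariant that not only $\wt_j(\wx) = \wt_j(\wy) =: \widetilde T_j$ for $1 \leq j \leq n$, but also that the two crossing points $\wwp_{t_n(\wx)}(\wx), \wwp_{t_n(\wy)}(\wy)$ on $\widetilde T_n$ lie on a common leaf $\ell_n := \wws(\wx) \cap \widetilde T_n$ of $\widetilde{\mathcal G}^s(\widetilde T_n)$. The base $n=1$ is immediate: $\wt$ is transverse to $\wwp$ and $\wls|_{\wt}$ is a product foliation by lines, so $\wws(\wx) \cap \wt = \wws(\wy) \cap \wt$ is a single leaf $\ell_0$ of $\widetilde{\mathcal G}^s(\wt)$, containing both $\wx$ and $\wy$.

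For the inductive step, after crossing $\widetilde T_n$ both forward orbits enter the same connected component $\widetilde P$ of $\mi \setminus \cT$, the lift of some Seifert piece $P_i$. I split into two cases, according to whether $\ell_n$ is a periodic leaf of $\widetilde{\mathcal G}^s(\widetilde T_n)$. In the periodic case, $\ell_n = \wws(\alpha) \cap \widetilde T_n$ for some lift $\alpha \subset \widetilde P$ of a vertical periodic orbit of $P_i$, so $\wws(\wx) = \wws(\alpha)$; both forward orbits are then forward asymptotic to $\alpha$ and trapped in $\widetilde P$, and the definition of the itinerary gives the common tail $\alpha, \alpha, \ldots$, concluding the proof. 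In the non-periodic case, $\ell_n$, being itself a leaf of $\widetilde{\mathcal G}^s(\widetilde T_n)$, does not cross any periodic leaf of that same foliation, so it lies inside a single elementary stable band $E$ of $\widetilde T_n$, and both crossing points lie in $E$.

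The main obstacle is then the structural fact that every $\wwp$-orbit entering $\widetilde P$ through $E$ must exit through one and the same out-boundary torus $\widetilde T_{n+1} \in \cT$. I plan to extract this from the description of $N(Z_i)$ developed in Section~\ref{sec:disj}: the elementary $\gsg$-annulus projecting $E$ is in bijection with a unique Birkhoff annulus $A$ of the spine $Z_i$, whose lozenge lift has as corners the two lifted vertical periodic orbits $\beta_1, \beta_2$ realizing the boundary periodic leaves of $E$, and whose sides are the corresponding half-leaves of $\wws(\beta_i)$ and $\wwu(\beta_i)$. An orbit through $E$ stays sandwiched between the $\wwp$-invariant stable leaves $\wws(\beta_1)$ and $\wws(\beta_2)$; since the flow in $\widetilde P$ is the lift of the flow in $N(Z_i)$, which sweeps the lozenge over $A$ onto a prescribed elementary unstable band on a single out-torus of $\widetilde P$, it must exit $\widetilde P$ through the corresponding $\widetilde T_{n+1}$. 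Granted this, $\wt_{n+1}(\wx) = \wt_{n+1}(\wy) = \widetilde T_{n+1}$, and $\wws(\wx) = \wws(\wy)$ forces both exit points to lie on the common leaf $\ell_{n+1} := \wws(\wx) \cap \widetilde T_{n+1}$, so the invariant propagates. Iterating yields either an infinite common sequence of tori (if the non-periodic case persists) or a finite one followed by a shared trapping orbit (when the periodic case eventually occurs), so in all cases $I^+(\wx) = I^+(\wy)$.
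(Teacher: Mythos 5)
Your induction hinges, already at the base case, on the assertion that $\wws(\wx)\cap\wt$ is a \emph{single} leaf of $\widetilde{\mathcal G}^s(\wt)$, which you deduce from the fact that $\widetilde{\mathcal G}^s(\wt)$ is a product foliation by lines. This is a non sequitur: the leaves of $\widetilde{\mathcal G}^s(\wt)$ are the connected components of the intersections $L\cap\wt$ for $L\in\wls$, and nothing available at this stage rules out that the single leaf $L=\wws(\wx)=\wws(\wy)$ meets $\wt$ in several components, with $\wx$ and $\wy$ on different ones. If that happened, your two starting points could lie in different elementary stable bands of $\wt$ and the induction would not start (the same issue recurs when you set $\ell_{n+1}:=\wws(\wx)\cap\widetilde T_{n+1}$, although there the propagation can be repaired: the flow map sends the connected set $\ell_n$ into a single component of $\wws(\wx)\cap\widetilde T_{n+1}$). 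The connectivity of $\wws(\wx)\cap\wt$ is in fact precisely what one gets by combining this lemma with its converse (Lemma~\ref{le:memefeuille2}), so assuming it here is essentially circular; the paper only proves such connectivity statements (Claims $1$ and $3$ in the proof of Proposition~\ref{pro:debutitinerary}) for stable leaves of \emph{periodic} orbits, and uses periodicity to do so. To fill the gap you would need an independent argument — for instance, that every orbit of $\wws(\wx)$ ``between'' those of $\wx$ and $\wy$ must also cross $\wt$ because it is forward asymptotic to the orbit of $\wx$, which eventually stays far on the far side of $\wt$ — and that is the paper's own proof in disguise.

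For comparison, the paper's argument is global and much softer: it never looks at elementary bands or at which Seifert piece is being traversed. For a finite itinerary it uses only that the half-spaces $\wt^+_i$ (the components of $\mi\setminus\wt_i(\wx)$ not containing $\wx$) are nested and that the orbit of $\wy$ must accumulate on the same periodic orbit $\alpha$, forcing it to cross each $\wt_i(\wx)$ in order. For an infinite itinerary it combines the uniform lower bound $\eta$ on distances between distinct elements of $\cT$ (giving $d(\wt_i(\wx),\wt_{i+p}(\wx))\geq p\eta$) with the bounded distance $R$ between the forward $\wwp$-orbits of $\wx$ and $\wy$ (they lie in the same stable leaf), concluding that the orbit of $\wy$ eventually lies on the far side of every $\wt_n(\wx)$; mutual subsequence-hood plus the fact that each plane is crossed at most once then gives equality. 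Your route also front-loads the content of Proposition~\ref{pro:debutitinerary} (that an elementary band determines the next torus crossed), which the paper establishes only after this lemma; the inclusion you need is the easy one, but as written the proposal is not a complete proof.
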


\begin{proof}
Clearly:
$$\wt_1(\wx) = \wt = \wt_1(\wy)$$

For every integer $i$ such that $\wt_i(\wx)$ is well-defined, let $\wt^+_i$ be the connected component of $\mi \setminus \wt_i(\wx)$ not containing
$\wx$. One easily observes that if $i < j$, then $\wt_j^+ \subset \wt_i^+$.

We first consider the case where $I^+(\wx)$ is finite:
$$I^+(\wx) = (\wt_1(\tilde{x}), ... , \wt_n(\tilde{x}), \alpha, \alpha, ...)$$
Then, $\alpha \subset \wt_n^+$. Moreover, $\wy$ lies in $\wws(\wx) = \wws(\alpha)$, hence $I^+(\wy)$ is finite, and the $\wwp$-orbit
of $\wy$ accumulates on $\alpha$. It must therefore enter in $\wt^+_n$, hence intersects $\wt_n(\wx)$.
But for that purpose, it must enter in $\wt^+_{n-1}$, hence intersect $\wt_{n-1}(\wx)$. Inductively, we obtain that
$(\wt_1(\tilde{x}), ... , \wt_n(\tilde{x}))$ is a subsequence (in that order), of $I^+(\wy)$.

Since we can reverse the role of $\wx$ and $\wy$, we also prove in a similar way that $I^+(\wy)$ is a subsequence
of $I^+(\wx)$. By the remark above the equality $I^+(\wx) = I^+(\wy)$ follows.
\medskip

We consider now the other case; the case where $I^+(\wx)$ is infinite. Then, by what we have just proved above,
$I^+(\wy)$ is an infinite sequence too.
Recall that there is a positive real number $\eta$ bounding from below the distance between elements of $\cT$. In particular:
$$\forall i \in \mathbb N, \;\; d(\wt_i(\wx), \wt_{i+1}(\wx)) \geq \eta$$
Now, any length minimizing path between $\wt_i(\wx)$ and $\wt_{i+2}(\wx)$ must intersect
$\wt_{i+1}(\wx)$. It follows easily that:
$$\forall i \in \mathbb N, \;\; d(\wt_i(\wx), \wt_{i+2}(\wx)) \geq 2\eta$$
Inductively, one gets:
$$\forall i, p \in \mathbb N, \;\; d(\wt_i(\wx), \wt_{i+p}(\wx)) \geq p\eta$$

On the other hand, since $\widetilde y$ lies in $\wws(\wx)$, there is a positive real number $R$ such that:
$$\forall t>0, \;\; d(\wwp^t(\wx), \wwp^t(\wy)) \leq R$$

For every positive integer $n$, select $t \in {\mathbb R}^+$ such that $\wwp^t(\wx)$ lies in $\wt^+_{n+p}(\wx)$, where $p \geq 2R/\eta$. Then:
$$d(\wwp^t(\wx), \wt_n(\wx)) \geq p\eta \geq 2R$$

Since $d(\wwp^t(\wx), \wwp^t(\wy)) \leq R$, $\wwp^t(\wx)$ and $\wwp^t(\wy)$ lie on the same side of $\wt_n(\wx)$, i.e. $\wt_n^+$.
Hence $\wt_n(\wx)$ appears in the positive itinerary of $\wy$. Since $n$ is arbitrary, it follows that $I^+(\wx)$ is a subsequence
of $I^+(\wy)$, the order in the sequence being preserved.

Switching the roles of $\wx$ and $\wy$, we also get that $I^+(\wy)$ is a subsequence
of $I^+(\wx)$. Hence, the two itineraries must coincide.
\end{proof}

In order to prove the reverse statement, we need the following result:

\begin{proposition}
\label{pro:debutitinerary}
Let $\wt$, $\wt'$ be two elements of $\cT$, intersected successively by a $\wwp$-orbit, i.e. such that for some $\wz \in \wt$,
the first intersection of the forward orbit of $\wz$ with $\cT$ (after $\wt$) is some $\wwp^t(\wz) \in \wt'$,
$t > 0$. Then, the subset $\wA(\wt, \wt')$ comprised of elements of $\wt$ such that the positive itinerary starts by $(\wt, \wt', ...)$ is a stable elementary band;
more precisely, the stable elementary band of $\wt$ containing $\wz$.
\end{proposition}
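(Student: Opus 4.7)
The plan is to identify $\wA(\wt,\wt')$ with $S$, the stable elementary band of $\wt$ containing $\wz$. Let $\widetilde P_j$ denote the lift of the Seifert piece $P_j$ that the forward $\wwp$-orbit of $\wz$ enters immediately after crossing $\wt$. Then $\wt$ is an incoming boundary component of $\widetilde P_j$ and, by hypothesis on $\wz$, the outgoing torus $\wt'$ is another boundary component of the same lifted piece. For every $\wx\in\wt$ not lying on a periodic leaf of $\widetilde{\gsg}(\wt)$, the forward orbit of $\wx$ exits $\widetilde P_j$ in finite time through a uniquely defined element $E(\wx)\in\cT$; the goal is to show $E^{-1}(\wt')=S$.

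For the inclusion $S\subseteq\wA(\wt,\wt')$, I would show that the exit-torus map $E$ is locally constant and then invoke connectedness of $S$. Local constancy follows from continuous dependence of $\wwp$ on initial conditions and transversality of each element of $\cT$ to $\wwp$: given $\wx$ with $E(\wx)=\wt'$ and exit time $t(\wx)$, nearby points in $\wt$ have orbits crossing $\wt'$ in times close to $t(\wx)$. The uniform separation (\ref{eq:TT'}) between distinct elements of $\cT$ forbids the exit torus from jumping, since the exit point varies continuously and the available targets in $\cT$ form an $\eta$-separated family. Hence $E$ is constant on each stable elementary band, in particular equal to $\wt'$ on $S$. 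Points lying on a periodic leaf of $\widetilde{\gsg}(\wt)$ are automatically excluded from $\wA(\wt,\wt')$ since their forward orbits remain trapped in $\widetilde P_j$, asymptotic to a vertical periodic orbit, and their itinerary therefore continues with a lift of that vertical orbit rather than with an element of $\cT$.

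The reverse inclusion $\wA(\wt,\wt')\subseteq S$ is the main obstacle, and I would approach it via the lozenge description of Remark \ref{rk.lozengeannulus}. The band $S$ corresponds to a Birkhoff band $\wA$ in $\widetilde P_j$, equivalently to a lozenge $C$ in $\mi$ whose corners are the two vertical periodic lifts $\widetilde\theta_1,\widetilde\theta_2$ whose stable leaves cut out the periodic leaves bounding $S$. Orbits issued from $S$ traverse the flowbox attached to $\wA$ and exit $\widetilde P_j$ across an unstable elementary band of $\wt'$ bounded by the unstable lifts of $\widetilde\theta_1,\widetilde\theta_2$. A distinct stable elementary band $S'$ corresponds to a different Birkhoff band with a different pair of bounding vertical lifts, and I would argue that the specific lift of the exit torus is uniquely prescribed by this pair. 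The delicate step is to rule out that two distinct Birkhoff bands share the same outgoing lift $\wt'$; for this I would exploit the embeddedness and non-backtracking of the transverse tori established in the proof of Proposition \ref{transtor}, which forces the combinatorics on $\wt'$ to be compatible with a single incoming Birkhoff band. If two distinct Birkhoff bands landed on the same lift $\wt'$, their outgoing unstable bands would be translates of one another under the $\pi_1$-equivariance of $\widetilde P_j$, contradicting the embeddedness downstairs of the transverse torus of which $\wt'$ is a lift. Once this rigidity is established, the reverse inclusion follows.
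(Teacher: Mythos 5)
Your first inclusion ($S\subseteq\wA(\wt,\wt')$, via local constancy of the exit map $E$ on $\wt$ minus the periodic leaves, using transversality and the $\eta$-separation~(\ref{eq:TT'}), then connectedness of the band) is correct and is a reasonable substitute for the paper's identification of $S$ with the trace on $\wt$ of the saturation of a single Birkhoff band. The problem is the reverse inclusion, which is the actual content of the proposition, and there your argument does not work.

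Since $E$ is constant on elementary bands, what you must rule out is that two \emph{distinct} stable elementary bands $S$, $S'$ of $\wt$ satisfy $E(S)=E(S')=\wt'$. Your proposed contradiction --- that the two outgoing unstable elementary bands in $\wt'$ ``would be translates of one another under $\pi_1$-equivariance, contradicting the embeddedness downstairs of $T'$'' --- is unfounded. Two distinct Birkhoff bands adjacent to $\wt$ are in general lifts of \emph{different} Birkhoff annuli of $Z_j$, so there is no deck transformation relating them; and even if there were, a single lifted torus $\wt'$ routinely contains infinitely many distinct unstable elementary bands (the torus $T'$ downstairs has one elementary $\gug$-annulus per Birkhoff annulus of $Z_j$ exiting through it), which is perfectly compatible with $T'$ being embedded. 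Likewise, the ``non-backtracking'' invoked from Proposition~\ref{transtor} is a property of chains of lozenges underlying Birkhoff tori, not of the transverse tori, and gives you nothing here.

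What is actually needed --- and what the paper does --- is an isolation argument in $\mi$: letting $\alpha,\beta$ be the corner orbits of the Birkhoff band $\widetilde A_0$ crossed by the orbit of $\wz$, one uses the hypothesis that $\alpha,\beta$ are \emph{not} $1$-prong orbits to choose a \emph{second} component $S^2_\alpha$ of $\wws(\alpha)\setminus\alpha$ (and similarly for $\beta$) and build two properly embedded planes $S(\alpha)$, $S(\beta)$ whose common complementary region $C(\alpha)\cap C(\beta)$ contains $\wt'$ (Claim~2) and meets $\wt$ exactly in the band $S$ (Claim~3, itself requiring a solid-torus trapping argument to show $S^2_\alpha$ misses $\wt$). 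A forward orbit issued from $\wt\setminus \overline{S}$ is disjoint from $\wws(\alpha)\cup\wws(\beta)$, hence can never enter $C(\alpha)\cap C(\beta)$ and never meets $\wt'$ at all. Your proof makes no use of the no-$1$-prong hypothesis, yet the statement genuinely depends on it; this is the missing idea.
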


\begin{proof}
The orbit of $\wz$ between the times $0$ and $t$
lies in a connected component of $\mi \setminus \cT$, hence projects into a Seifert piece $P_i$. Due to the previous section, this orbit
in $M$ starts in
a connected component of $\partial P_i$ (projection $T$ of $\wt$), intersects one of the Birkhoff annuli $A_0$ contained in the spine $Z_i$, and then crosses
the projection $T'$ of $\wt'$. In other words, there is a lift $\widetilde{A}_0$ of $A_0$ intersected by the orbit of $\wz$ and contained between $\wt$ and $\wt'$.
The boundary of $A_0$ is the union of two periodic orbits (maybe equal one to the other), and any element of $A_0$ has a negative orbit intersecting $T$, and a positive orbit intersecting $T'$. At the universal covering level, the boundary of $\widetilde{A}_0$ is the union of two distinct orbits $\alpha$ and $\beta$;
the $\wwp$-saturation of the Birkhoff band $\widetilde{A}_0$ intersects $\wt$ (respectively $\wt'$) along an elementary band $\widetilde{A}$ (respectively $\widetilde{A}'$).
Moreover, the boundary of $\widetilde{A}$ is the union of two leaves of $\widetilde{\mathcal G}^s(\wt)$. More precisely, one of these leaves is contained in the intersection
$\wt \cap S^1_\alpha$, and the other in the intersection $\wt \cap S^1_\beta$, where $S^1_\alpha$ is a component of $\wws(\alpha) \setminus \alpha$ and $S^1_\beta$ a component of $\wws(\beta) \setminus \beta$.

Similarly, $\widetilde{A}'$ is an elementary band in $\wt'$ bounded by two leaves of $\widetilde{\mathcal G}^u(\wt')$, which are contained in some
components $U^1_\alpha$, $U^1_\beta$ of $\wwu(\alpha) \setminus \alpha$, $\wwu(\beta) \setminus \beta$.

Clearly $\widetilde{A} \subset  \wA(\wt,\wt')$.

\textit{Claim $1$: the intersection $S^1_\alpha \cap \wt$ is connected.} If not, there would be a segment of orbit of $\wwp$ with extremities in $\wt$ but
not intersecting $\wt$. It would be in contradiction with the fact that $\wt$ disconnects $\mi$ and is transverse to $\wwp$.

Similarly, $S^1_\beta \cap \wt$ is connected (i.e. is reduced to a boundary component of the elementary band $\widetilde{A}$, and the intersections
$U^1_\alpha \cap \wt'$, $U^1_\beta \cap \wt'$ are connected, i.e. precisely the boundary components of $\widetilde{A}'$.

\textit{Key fact: by our definition of pseudo-Anosov flows, $\alpha$ and $\beta$ are not $1$-prong
orbits.} It follows that there is a component $S^2_\alpha$ of $\wws(\alpha) \setminus \alpha$ different from $S^1_\alpha$. We select this component to be the one just after $U^1_\alpha$, i.e. such that the following is true: the union $S(\alpha)=S^1_\alpha \cup \alpha \cup S^2_\alpha$ is a $2$-plane such that the connected component $C(\alpha)$ of $\mi \setminus S(\alpha)$ containing $\widetilde{A}_0$ does not intersect $\wws(\alpha)$. Similarly, we define a $2$-plane $S(\beta) = S^1_\beta \cup \beta \cup S^2_\beta$ contained in $\wws(\beta)$ such that the connected component $C(\beta)$ of $\mi \setminus S(\beta)$ containing $\widetilde{A}_0$ does not intersect $\wws(\beta)$ (see fig. \ref{fig:trapp}).

\begin{figure}
\centeredepsfbox{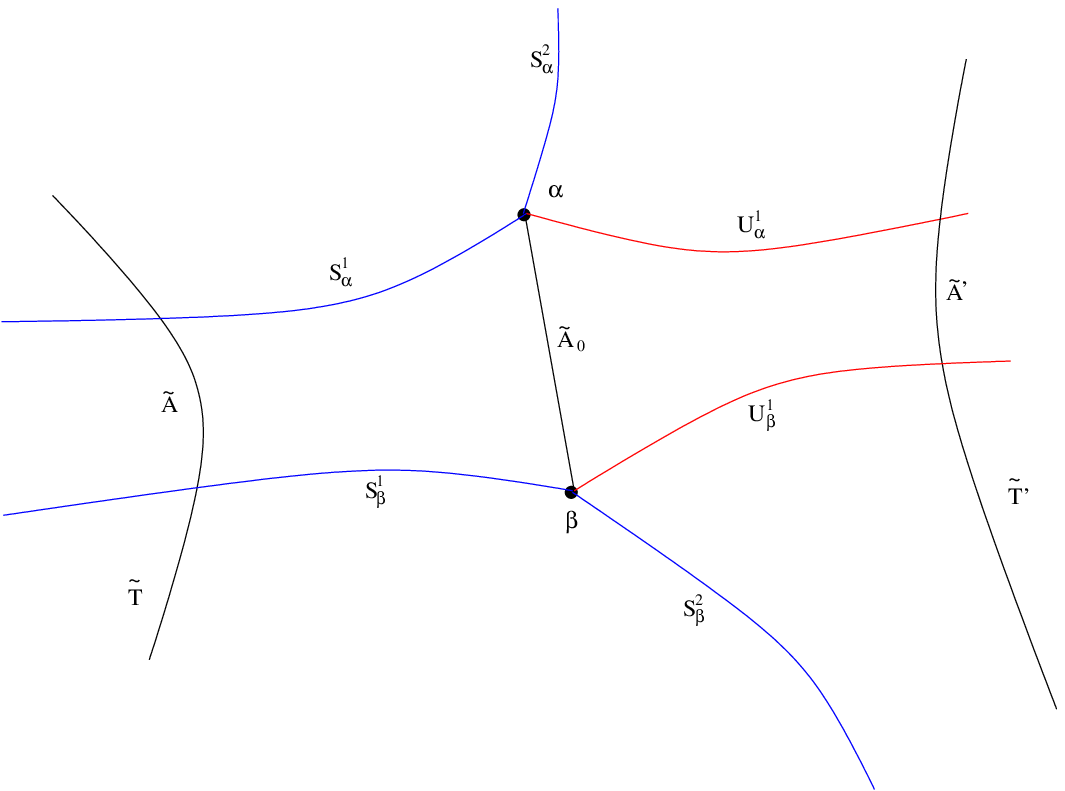}
\caption{
Isolating $\wt'$ and $\widetilde{A}_0$ by stable prongs.}
\label{fig:trapp}
\end{figure}

\textit{Claim $2$: $S^2_\alpha$ and $S^2_\beta$ are disjoint from $\wt'$:} indeed, the positive orbit of any point in $\wt'$ is trapped into the component of
$\mi \setminus \wt'$ disjoint from $\widetilde{A}_0$, hence cannot accumulate on $\alpha$ or $\beta$: $\wt'$ is disjoint from $\wws(\alpha) \cup \wws(\beta)$. The claim follows.
\medskip

\textit{Claim $3$: $S^2_\alpha$ and $S^2_\beta$ are disjoint from $\wt$:} Assume by contradiction that $S^2_\alpha$ intersects $\wt$.
In the same way we have proved that $S^1_\alpha \cap \wt$ is connected, one can prove that $S^2_\alpha \cap \wt$ is a single leaf of $\widetilde{\mathcal G}^s(\wt)$. It bounds together with
$S^1_\alpha \cap \wt$ a $\widetilde{\mathcal G}^s(\wt)$-band $\widetilde{B}$. Let $g$ be the indivisible element of
$\pi_1(M)$ corresponding to a generator of the fundamental group of
the projection of $\alpha$: it preserves $\widetilde \alpha$, $\wt$, hence also $S^1_\alpha \cap \wt$, $S^2_\alpha \cap \wt$ and the band $\widetilde{B}$. It follows
that the union of $\widetilde{B}$ with the regions in $S^1_\alpha$, $S^2_\alpha$ between $S^1_\alpha \cap \wt$, $S^2_\alpha \cap \wt$ and $\alpha$ projects in
$M$ as a torus contained in the Seifert piece $P_i$. This torus bounds a solid torus in $P_i$. Every orbit of $\phi$ entering in this solid torus from the projection of
$\widetilde{B}$ cannot exit from this solid torus, since it cannot further intersect the projection of $\widetilde{B}$, nor the projections of $S^1_\alpha$, $S^2_\alpha$.
It has to remain in the Seifert piece containing the projection of $\alpha$. It is a contradiction since the set of positive orbits trapped in $P_i$ has empty interior. Claim $3$ is proved.
\medskip

We now focus our attention to the region $C(\alpha) \cap C(\beta)$, whose boundary is the disjoint union $S(\alpha) \bigsqcup S(\beta)$. According to Claim $2$,
$\wt'$ is contained in $C(\alpha) \cap C(\beta)$. Now it follows from Claim $3$ that the intersection $C(\alpha) \cap C(\beta) \cap \wt$
is the elementary band $\widetilde{A}$.

Consider now the positive orbit of an element $p$ of $\wt \setminus \widetilde{A}$: if $p$ lies in $\partial\widetilde{A}$,
then this orbit accumulates on $\alpha$ or $\beta$ and therefore does not intersect $\wt'$. If not, then this orbit is disjoint from $\wws(\alpha) \cup \wws(\beta)$, hence never enters in $C(\alpha) \cap C(\beta)$. In particular, it never crosses $\wt'$.

It follows that $\wA(\wt,\wt') \subset \wA$ and so
proposition \ref{pro:debutitinerary}  is proved.
\end{proof}

We can now prove the converse of Lemma \ref{le:memefeuille}:

\begin{lemma}
\label{le:memefeuille2}
Let $\wt$ be an element of $\cT$. Let $\wx$, $\wy$ be two elements of $\wt$ such that $I^+(\wx)=I^+(\wy)$. Then $\wy$ and $\wx$
lie in the same leaf of $\widetilde{\mathcal G}^s(\wt)$.
\end{lemma}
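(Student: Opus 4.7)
I argue by contradiction, supposing that $I^+(\wx)=I^+(\wy)$ while $\wws(\wx)\neq\wws(\wy)$. Using the product structure $\wt\approx{\mathcal H}^s(\wt)\times{\mathcal H}^u(\wt)$, let $\wz\in\wt$ be the unique point with $\wws(\wz)=\wws(\wy)$ and $\wwu(\wz)=\wwu(\wx)$. Then $\wz\in\wws(\wy)$, so Lemma~\ref{le:memefeuille} applied to $\wz$ and $\wy$ yields $I^+(\wz)=I^+(\wy)=I^+(\wx)$, and $\wz\neq\wx$ because $\wws(\wz)=\wws(\wy)\neq\wws(\wx)$. Replacing $\wy$ by $\wz$, I may assume that $\wx\neq\wy$ lie on a common leaf $u_0$ of $\widetilde{\mathcal G}^u(\wt)$, still share the positive itinerary $I^+(\wx)=I^+(\wy)=(\wt=\wt_1,\wt_2,\ldots)$, and still satisfy $\wws(\wx)\neq\wws(\wy)$.

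In the finite case, where the itinerary terminates with repeated entries $\alpha,\alpha,\ldots$ for some lift $\alpha$ of a vertical periodic orbit, both $\wx,\wy$ lie in $\wws(\alpha)\cap\wt$; reproducing the argument of Claim~$1$ in the proof of Proposition~\ref{pro:debutitinerary} (the transversality of $\wt$ to $\wwp$ combined with the separating property of $\wt$ in $\mi$ prevents a flow segment in $\wws(\alpha)$ with endpoints in $\wt$ from avoiding $\wt$ in between), the intersection $\wws(\alpha)\cap\wt$ is a single leaf of $\widetilde{\mathcal G}^s(\wt)$, immediately contradicting $\wws(\wx)\neq\wws(\wy)$. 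In the infinite case, I iterate Proposition~\ref{pro:debutitinerary}: the set $\wA_n\subset\wt$ consisting of points whose positive itinerary begins with $(\wt_1,\ldots,\wt_n)$ is obtained from $\wA_{n-1}$ by pulling back, through the Poincar\'e return map $\wt\to\wt_n$ (which preserves both $\wls$ and $\wlu$), the elementary stable band $\wA(\wt_n,\wt_{n+1})$; hence each $\wA_n$ is a stable band of $\wt$, the bands are nested, and both $\wx,\wy\in\wA_n$ for every $n$. The product structure on $\wt$ forces the unstable arc $\sigma\subset u_0$ joining $\wx$ to $\wy$ to lie in every $\wA_n$, so the Poincar\'e map carries $\sigma$ homeomorphically onto an arc $\sigma_n\subset\wt_n$ lying in a single leaf of $\widetilde{\mathcal G}^u(\wt_n)$ and entirely inside the elementary stable band $\wA(\wt_n,\wt_{n+1})$. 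Lemma~\ref{le:boundabove} then yields the uniform bound $\mathrm{length}(\sigma_n)\leq L_0$ for every $n\geq 1$.

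The hard step is to extract a contradiction from this uniform length bound. My plan is to combine compactness with the pseudo-Anosov expansion. Since only finitely many tori $T_k$ appear in $\cT$, by pigeonhole some $T_{k_0}$ is crossed by the common itinerary infinitely often; choose a subsequence $n_j\to\infty$ and covering transformations $g_j\in\pi_1(M)$ so that $g_j(\wt_{n_j})=\wt^\ast$ is a single fixed lift of $T_{k_0}$. By compactness of $T_{k_0}$, after a further extraction the points $g_j(x_{n_j})$ and $g_j(y_{n_j})$ converge to limits $p,q\in\wt^\ast$ lying on a common leaf of $\widetilde{\mathcal G}^u(\wt^\ast)$ at distance at most $L_0$. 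The lozenge/Birkhoff-annulus correspondence (Remark~\ref{rk.lozengeannulus}) identifies the sequence of lozenges traversed by the two orbits as being the same, and the pseudo-Anosov property that two distinct orbits on a common unstable leaf of $\wlu$ cannot be forward asymptotic (otherwise expansivity would force them onto a single orbit) combined with the uniform bound $\mathrm{length}(\sigma_n)\leq L_0$ forces $p=q$ and, after undoing the translations $g_j$, forces the orbits of $\wx$ and $\wy$ to coincide. But $\wt$ is transverse to $\wwp$ and meets each orbit at most once, so this gives $\wx=\wy$, contradicting $\wws(\wx)\neq\wws(\wy)$; hence $\wws(\wx)=\wws(\wy)$, completing the proof.
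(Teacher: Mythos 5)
Your argument follows the paper's own proof almost step for step through the reduction and the derivation of the uniform bound: the replacement of $\wy$ by the point $\wz$ lying on the $\widetilde{\mathcal G}^u(\wt)$-leaf of $\wx$ and the $\widetilde{\mathcal G}^s(\wt)$-leaf of $\wy$ (justified by Lemma~\ref{le:memefeuille}), the finite case via the connectedness of $\wws(\alpha)\cap\wt_n$, and, in the infinite case, the observation that the unstable arc $\sigma$ joining the two points is carried by the flow to unstable arcs $\sigma_n$ contained in the elementary stable bands $\wA(\wt_n,\wt_{n+1})$, whence $\mathrm{length}(\sigma_n)\le L_0$ by Lemma~\ref{le:boundabove}. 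Up to that point everything is correct and coincides with the paper.

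The divergence is in the final step, and there your argument has a genuine gap. The paper concludes in one line: the $\sigma_n$ are obtained by pushing a nondegenerate unstable arc forward along $\wwp$ for arbitrarily long times (the crossing times tend to infinity since distinct elements of $\cT$ are at distance at least $\eta$), so their lengths tend to infinity, contradicting the bound $L_0$; this is the standard expansion property of unstable arcs under the forward flow. Your compactness substitute does not establish this. After extracting limits $p,q\in\wt^\ast$ of $g_j(x_{n_j})$ and $g_j(y_{n_j})$, nothing you have written forces $p=q$: two limit points at unstable distance at most $L_0$ are just that, and the assertion that ``distinct orbits on a common unstable leaf cannot be forward asymptotic, combined with the uniform bound'' forces $p=q$ is precisely the divergence statement you are trying to prove, now applied to the limit pair --- the reasoning is circular. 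Moreover, even granting $p=q$, ``undoing the translations $g_j$'' on a limit along a subsequence yields no identity between $\wx$ and $\wy$; at best it would say $\mathrm{length}(\sigma_{n_j})\to 0$, which is not yet a contradiction. The clean way to finish is the paper's: invoke (or prove separately) that a nondegenerate arc in a leaf of $\wlu$ transverse to the flow has length tending to infinity under $\wwp^t$ as $t\to+\infty$ --- equivalently, two distinct orbits in the same unstable leaf cannot stay at bounded distance for all forward time --- and note that this is incompatible with $\mathrm{length}(\sigma_n)\le L_0$.
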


\begin{proof}
Let $\wx_1$, $\wx_2$, ... and $\wy_1$, $\wy_2$, ... be the positive iterates of $\wx$, $\wy$ belonging in $\wt_1 := \wt_1(\wx) = \wt_1(\wy)$, $\wt_2 := \wt_2(\wx)  = \wt_2(\wy)$, ... .
According to Proposition~\ref{pro:debutitinerary}, for every positive integer $i$, the iterates $\wx_i$, $\wy_i$ lie in the same stable elementary band $\wA_i := \wA(\wt_i, \wt_{i+1}) \subset \wt_i$.

Consider first the case where the common itinerary $I^+(\wx) = I^+(\wy)$ is finite, of length $n+1$: $\wx_n$ and $\wy_n$ lie in the elementary band of $\wt_n$, and $I_k(\wx) = I_k(\wy) = \alpha$ for some periodic orbit $\alpha$, for all $k>n$. Hence,
th positive orbits of $\wx_n$, $\wy_n$ accumulate on $\alpha$. It follows from the arguments used in the proof of Claim $3$ of Proposition~\ref{pro:debutitinerary} that the
intersection $\wws(\alpha) \cap \wt_n$ is a single leaf of $\widetilde{\mathcal G}^s(\wt_n)$. Proposition~\ref{le:memefeuille} follows easily in this case.

We are left with the case where $I^+(\wx) = I^+(\wy)$ is infinite.
Let $\widetilde{\mathcal G}_i^s$, $\widetilde{\mathcal G}_i^u$ denote the restriction to $\widetilde{A}_i$ of $\wls$, $\wlu$. Observe that every leaf of $\widetilde{\mathcal G}_i^u$ intersects every leaf of $\widetilde{\mathcal G}_i^s$.
In particular, the $\widetilde{\mathcal G}_1^s$-leaf of $\wx$ intersects the $\widetilde{\mathcal G}_1^u$-leaf of $\wy$. Therefore, according to Lemma~\ref{le:memefeuille}, one can assume without loss of generality
that $\wx$ and $\wy$ lies in the same leaf of $\widetilde{\mathcal G}_1^u$. More precisely, there is a path $c: [a,b] \to \widetilde{A}_1$ contained in a leaf of $\widetilde{\mathcal G}_1^u$ and joining $\wx$ to $\wy$.

\textit{Assume by way of contradiction that $c$ is not a trivial path reduced to a point.}
If we push $c$ forward by the flow $\wwp$, one get a path $c_2: [a,b] \to \widetilde{A}_2$, contained in a leaf of $\widetilde{\mathcal G}_2^u$,
connecting $\wx_2$ to $\wy_2$. By induction, pushing along $\wwp$, we get a sequence of unstable paths $c_i: [a,b] \to \widetilde{A}_i$. Now, since
all these paths are obtained from $c$ by pushing along $\wwp$, the length of $c_i$ is arbitrarily long if $i$ is sufficiently big. This contradicts
Lemma~\ref{le:boundabove}.

This contradiction shows that $c$ is reduced to a point, i.e. that $\wx$ and $\wy$ lie in the same leaf of $\widetilde{\mathcal G}^s(\wt)$.
\end{proof}

Applying Lemmas~\ref{le:memefeuille} and \ref{le:memefeuille2} to the reversed flow one obtains:

\begin{proposition}
\label{pro:memefeuille2}
Let $\wt$ be an element of $\cT$. Let $\wx$, $\wy$ be two elements of $\wt$. Then $I^-(\wx)=I^-(\wy)$ if and only if $\wy$ and $\wx$
lie in the same leaf of $\widetilde{\gug}(\wt)$.\hfill $\square$
\end{proposition}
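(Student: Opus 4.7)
The plan is to apply the time-reversal symmetry of the entire setup. If $\Phi$ is a pseudo-Anosov flow, then the reversed flow $\Phi^{-1}$ is also a pseudo-Anosov flow, with its stable foliation equal to $\wlu$ and its unstable foliation equal to $\wls$. All constructions of section~\ref{sec:disj} and the beginning of section~\ref{sec:itinerary} depend only on the underlying unparameterized orbits and on the topology of the Seifert decomposition, so they are intrinsic to the pair of foliations $\{\wls,\wlu\}$ and not to the time orientation. In particular, the JSJ decomposition of $M$, the collection of tori $\{T_k\}$ transverse to $\Phi$, and the set $\cT$ are the same for $\Phi$ and $\Phi^{-1}$; the only thing that changes is the labelling of stable/unstable elementary bands, which swap.

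First I would verify this symmetry carefully: the vertical periodic orbits of $\Phi$ in each piece $P_i$ are the vertical periodic orbits of $\Phi^{-1}$, the spines $Z_i$ are the same, and the foliations $\gsg(T_k)$, $\gug(T_k)$ on the transverse tori are exchanged. Hence the bound of Lemma \ref{le:boundabove} applies equally with the roles of stable and unstable exchanged. Second, I would observe that for the reversed flow $\Phi^{-1}$, the positive itinerary of a point $\wx \in \wt$ (as defined using $\widetilde{\Phi^{-1}}$) is exactly $I^-(\wx)$ as defined for $\wwp$. Indeed, the sequence of elements of $\cT$ crossed by the negative $\wwp$-orbit of $\wx$ coincides with the sequence crossed by the positive $\widetilde{\Phi^{-1}}$-orbit; and the termination rule in the finite case (repetition of a lifted vertical periodic orbit $\beta$ whose unstable leaf for $\Phi$ is the stable leaf for $\Phi^{-1}$) matches.

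Third, I would invoke Lemma \ref{le:memefeuille} applied to $\Phi^{-1}$: if $\wx,\wy \in \wt$ lie in the same $\Phi^{-1}$-stable leaf through $\wt$, i.e.\ in the same leaf of $\widetilde{\gug}(\wt)$, then the $\Phi^{-1}$-positive itineraries agree, which translates to $I^-(\wx)=I^-(\wy)$. Conversely, Lemma \ref{le:memefeuille2} applied to $\Phi^{-1}$ says that if the $\Phi^{-1}$-positive itineraries coincide, then $\wx$ and $\wy$ lie in the same leaf of the $\Phi^{-1}$-stable foliation restricted to $\wt$; translating back, $I^-(\wx)=I^-(\wy)$ implies that $\wx$ and $\wy$ lie in the same leaf of $\widetilde{\gug}(\wt)$.

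The only real point requiring care is the time-reversal invariance of the entire framework, and in particular that the finiteness convention for $I^-$ matches the finiteness convention for $I^+$ under the exchange: a negative orbit of $\wwp$ accumulating on a lifted vertical periodic orbit $\beta$ corresponds exactly to a positive orbit of $\widetilde{\Phi^{-1}}$ accumulating on the same periodic orbit, with $\wws_{\Phi^{-1}}(\beta) = \wwu(\beta)$. Once this dictionary is fixed, both directions of the equivalence are immediate consequences of Lemmas \ref{le:memefeuille} and \ref{le:memefeuille2} applied verbatim to $\Phi^{-1}$, and no new geometric argument is needed.
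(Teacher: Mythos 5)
Your proposal is correct and is exactly the paper's intended argument: the paper proves this proposition with the single sentence ``Applying Lemmas~\ref{le:memefeuille} and \ref{le:memefeuille2} to the reversed flow one obtains,'' i.e.\ the time-reversal symmetry you spell out. Your careful verification that the tori, spines, and finiteness conventions for itineraries are invariant under reversing the flow (with stable and unstable objects exchanged) is precisely the content the authors leave implicit.
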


Itineraries are elements of ${\mathcal I} := \cT_\sharp^{\mathbb Z}$ where $\cT_\sharp$ is the disjoint union of $\cT$ with the set $\Delta$ of lifts of
vertical periodic orbits of $\Phi$. We define the \textit{shift map} $\sigma: {\mathcal I} \to {\mathcal I}$ which send any sequence $( \zeta_i )_{i \in \mathbb Z}$ to the sequence
$( \zeta_{i+1} )_{i \in \mathbb Z}$. Clearly, if $\wx$ and $\wy$ are two elements of $\mi$ lying on the same orbit of $\wwp$, then
$I(\wy)$ is the image of $I(\wx)$ under some iterate $\sigma^k$. Conversely:

\begin{corollary}
\label{cor:sameitinerary}
Let $\wx$, $\wy$ be two elements of $\mi$. Then $\wx$ and $\wy$ lie in the same orbit of $\wwp$ if and only if $I(\wy)=\sigma^k(I(\wx))$ for some $k \in \mathbb Z$.
\end{corollary}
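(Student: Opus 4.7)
The plan is to handle the two directions of the ``if and only if'' separately, with the converse being the substantive one, and to reduce it via the preceding lemmas to a case analysis at a single element of $\cT$ common to both itineraries.

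The forward direction is immediate from the definition of $I$: if $\wy=\wwp^t(\wx)$, then the $\wwp$-orbits of $\wx$ and $\wy$ coincide and cross the same elements of $\cT$ in the same order, and (in each time direction) accumulate on the same lift of a vertical periodic orbit, when applicable. Hence $I(\wy)$ is obtained from $I(\wx)$ by shifting the $\mathbb Z$-indexing by some integer $k$.

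For the converse I would first translate $\wy$ along the flow to absorb the shift, reducing to $I(\wx)=I(\wy)$. The main case is when some term of this common itinerary belongs to $\cT$: let $\wt=I_i(\wx)=I_i(\wy)$ be such a term. By the definition of the itinerary, the $\wwp$-orbits of $\wx$ and $\wy$ each cross $\wt$, and $\wt$ being transverse to $\wwp$ and separating $\mi$ each such orbit meets $\wt$ in a unique point, producing $\wx',\wy'\in\wt$. The matching of the full itineraries gives $I^+(\wx')=I^+(\wy')$ and $I^-(\wx')=I^-(\wy')$. Applying Lemma~\ref{le:memefeuille2} and Proposition~\ref{pro:memefeuille2}, the points $\wx'$ and $\wy'$ lie in the same leaf of $\widetilde{\gsg}(\wt)$ \emph{and} in the same leaf of $\widetilde{\gug}(\wt)$. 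These two leaves meet in exactly one point of $\wt$, so $\wx'=\wy'$, and therefore $\wx$ and $\wy$ lie on the same $\wwp$-orbit.

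It remains to dispose of the degenerate case in which no term of $I(\wx)$ belongs to $\cT$, i.e.\ every term is in $\Delta$. By the construction of $I$ from $I^\pm$, this forces the full (bi-infinite) $\wwp$-orbit of $\wx$ to be disjoint from $\cT$, hence contained in a single connected component of $\mi\setminus\cT$, which is a lift of some $N(Z_i)$. The structural fact recalled at the beginning of Section~\ref{sec:itinerary} $-$ the only full orbits of $\Phi$ contained in $N(Z_i)$ are its vertical periodic orbits $-$ then forces $\wx$ to lie on a lift $\alpha$ of such an orbit, so that $I(\wx)$ is the constant sequence equal to $\alpha$; the same reasoning produces $\alpha'$ with $I(\wy)=(\ldots,\alpha',\alpha',\ldots)$, and $I(\wx)=I(\wy)$ yields $\alpha=\alpha'$. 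Thus $\wx$ and $\wy$ lie on the same orbit. The whole technical core is already packaged in Lemma~\ref{le:memefeuille2} and Proposition~\ref{pro:memefeuille2}, so the main obstacles are bookkeeping: correctly absorbing the shift $k$ by a flow translation so that itineraries can be compared termwise, and closing off the trapped-orbit case (the only one not directly covered by the previous lemmas) via the structural description of the neighborhoods $N(Z_i)$.
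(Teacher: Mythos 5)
Your proposal is correct and follows essentially the same route as the paper's own proof: absorb the shift by moving along the flow, place both points on a common element $\wt$ of $\cT$, and apply Lemma~\ref{le:memefeuille2} and Proposition~\ref{pro:memefeuille2} together with the fact that each stable leaf of $\widetilde{\gsg}(\wt)$ meets each unstable leaf of $\widetilde{\gug}(\wt)$ in exactly one point. Your treatment of the degenerate all-$\Delta$ case is slightly more explicit than the paper's, but the argument is the same.
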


\begin{proof}
Assume that $I(\wy)=\sigma^k(I(\wx))$ for some $k \in \mathbb Z$. 
Suppose first that the $\wwp$ orbit of $\wx$ does not intersect
$\cT$. Then $\wx$ is in $\Delta$ and similarly $\wy$ is also
in $\Delta$. The hypothesis immediately imply that $\wx, \wy$ are in the
same orbit of $\wwp$.
If $\wx$ intersects $\cT$, then after a shift if necessary,  we may assume that 
$T_1(\wx)$ is an element of $\cT$.
Then, by replacing $\wy$ by the element of its $\wwp$-orbit in the element $T_1(\wx) = T_{1+k}(\wy)$ of $\cT$, and $\wx$
by its iterate in $T_1(\wx)$, one can assume that $\wx$ and $\wy$ both lie in $T_1(\wx)$, and that $I(\wx)=I(\wy)$.
In particular, $I^+(\wx)=I^+(\wy)$ and $I^-(\wx) = I^-(\wy)$.
Then, according to Lemma~\ref{le:memefeuille} and proposition~\ref{pro:memefeuille2}, $\wx$ and $\wy$ have the same stable leaf and the same unstable leaf.
The corollary follows.
\end{proof}

In the same way one can prove that for any $\wx, \wy$ in $\mi$, then 
$\wx,\wy$ are in the same stable leaf of $\wwp$ if and only if the
positive iteneraries of $\wx,\wy$ are eventually equal up to a fixed shift.

We can now extend Proposition~\ref{pro:debutitinerary}:
\begin{proposition}
\label{pro:itinerary}
Let $\wt$, $\wt'$ be two elements of $\cT$. Then, the subset $\wA(\wt, \wt')$
of $\wt$ comprised of elements of $\wt$ whose positive orbits intersects $\wt',$ if non-empty, is a stable band.
Furthermore, let $\wx$ be an element of $\wA(\wt, \wt')$; its positive itinerary has the form $(\wt_1 := \wt, \wt_2, ... , \wt_n, ....)$ where $\wt_n = \wt'$. Then,
for any other element $\wy$ of $\wA(\wt, \wt'),$ the first $n$-terms of $I^+(\wy)$ are also $(\wt_1 := \wt, \wt_2, ... , \wt_n)$. The elements $\wt_2$, ... , $\wt_{n-1}$ are precisely the elements of $\cT$ that
separate $\wt$ from $\wt'$ in $\mi$.
\end{proposition}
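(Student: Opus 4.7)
My plan is to proceed by induction on the position $n$ of $\wt'$ in the positive itinerary of a fixed $\wx \in \wA(\wt, \wt')$, the base case $n=2$ being exactly Proposition~\ref{pro:debutitinerary}.

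I would first handle the topological assertions about which planes appear in the itinerary. For any $\wy \in \wA(\wt, \wt')$, the compact orbit arc from $\wy$ to its intersection with $\wt'$ is transverse to $\cT$ and meets each element of $\cT$ at most once; it therefore crosses a plane $\wt'' \in \cT$ iff $\wt''$ separates the arc's endpoints, which, since $\wt, \wt', \wt''$ are pairwise disjoint connected planes, is equivalent to $\wt''$ separating $\wt$ from $\wt'$ in $\mi$. Thus the set $\mathcal{S}$ of crossed planes depends only on $\wt,\wt'$; taking $\wy = \wx$ yields $\mathcal{S} = \{\wt_2, \ldots, \wt_{n-1}\}$.

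Next I would equip $\mathcal{S}$ with the relation $\wt_i \prec \wt_j$ meaning $\wt_i$ separates $\wt$ from $\wt_j$. For two disjoint properly embedded planes in $\mi \cong \mathbb{R}^3$, the complement $\mi \setminus (\wt_i \cup \wt_j)$ has three components (one ``between'' and two ``outer''), and because both planes separate $\wt$ from $\wt'$, the pair $\wt,\wt'$ is forced to lie in the two outer regions; this rules out ties and makes $\prec$ total on each pair. Transitivity follows by observing that a cyclic alternative $\wt_i \prec \wt_j \prec \wt_k \prec \wt_i$ would compel any orbit arc from $\wt$ to $\wt'$ to cross one of the planes twice. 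The dynamical payoff is that such an orbit arc crosses the elements of $\mathcal{S}$ in $\prec$-order: after crossing $\wt_i$ the arc lies on the $\wt'$-side of $\wt_i$, so any $\wt_j$ with $\wt_i \prec \wt_j$ (which lies on that same side) cannot have been crossed earlier. Since $\prec$ depends only on $\mathcal{S}$, every $\wy \in \wA(\wt, \wt')$ shares the first $n$ terms $(\wt, \wt_2, \ldots, \wt_n)$ of its itinerary, giving the itinerary and separator statements.

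For the band assertion, the previous paragraph gives $\wA(\wt, \wt') \subset \wA(\wt, \wt_2)$, and Proposition~\ref{pro:debutitinerary} supplies a flow-induced homeomorphism $H: \wA(\wt, \wt_2) \to \widetilde{A}' \subset \wt_2$ onto the unstable elementary band $\widetilde{A}'$ produced in its proof. Hence
$$\wA(\wt, \wt') \;=\; H^{-1}\bigl(\wA(\wt_2, \wt') \cap \widetilde{A}'\bigr).$$
By flow-invariance of $\wls$ and $\wlu$, the map $H$ sends leaves of $\widetilde{\mathcal G}^s(\wt)$ to leaves of $\widetilde{\mathcal G}^s(\wt_2)$ and leaves of $\widetilde{\mathcal G}^u(\wt)$ to leaves of $\widetilde{\mathcal G}^u(\wt_2)$; in product coordinates it has the form $(x_1, y_1) \mapsto (\phi(x_1), \psi(y_1))$ with $\phi: \mathbb{R} \to (c,d)$ and $\psi: (a', b') \to \mathbb{R}$ homeomorphisms, reflecting that $\wA(\wt, \wt_2)$ is stably bounded and unstably unbounded while $\widetilde{A}'$ is the reverse. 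By the inductive hypothesis $\wA(\wt_2, \wt')$ is a stable band, so $\wA(\wt_2, \wt') \cap \widetilde{A}'$ is a rectangle bounded in both directions; pulling back through $H$, the unstable bounds coming from $\widetilde{A}'$ recede to $\pm\infty$ in the unstable direction of $\wt$ and disappear, while the stable bounds of $\wA(\wt_2, \wt')$ pull back to two leaves of $\widetilde{\mathcal G}^s(\wt)$. Hence $\wA(\wt, \wt')$ is a stable band, completing the induction. The step I expect to require most care is the topological foundation of the order $\prec$, namely the three-component decomposition of $\mi \setminus (\wt_i \cup \wt_j)$ for disjoint properly embedded planes in $\mi \cong \mathbb{R}^3$; once this input is secured, the rest of the argument reduces cleanly to Proposition~\ref{pro:debutitinerary} and the inductive hypothesis.
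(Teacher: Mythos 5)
Your proposal is correct and follows essentially the same route as the paper: the itinerary/separator claims come from the fact that the planes of $\cT$ crossed by the orbit arc are exactly those separating $\wt$ from $\wt'$, crossed in their separation order (the paper phrases this via the nesting $\wt_j^+ \subset \wt_i^+$ from Lemma~\ref{le:memefeuille}), and the band claim is obtained by reducing to Proposition~\ref{pro:debutitinerary} and iterating the flow-induced maps between consecutive tori (the paper carries out this iteration backwards, intersecting with elementary bands and flowing back, in the ``more information'' part of its proof). Your version is a somewhat more carefully structured induction, but the ingredients and the logic are the same.
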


\begin{proof}
Assume that $\wA(\wt, \wt')$ is non-empty, and let $\wx$ be an element of $\wA(\wt, \wt')$. Its positive itinerary contains $\wt'$, hence has the form $(\wt_1 := \wt, \wt_2, ... , \wt_n, ....)$ described in the statement.
As we have observed in the proof of Lemma~\ref{le:memefeuille}, we have $\wt^+_j \subset \wt^+_i$ for every $1\leq i < j \leq n$, hence every $\wt_i$ for $1 < i < n$ disconnects $\wt$ from
$\wt'$. On the other hand, every element of $\cT$ disconnecting $\wt$ and $\wt'$ must appear in the positive itinerary of elements of $\wA(\wt, \wt')$. It follows easily that the first $n$-terms
of the positive itinerary of elements of $\wA(\wt, \wt')$ coincide as stated in Proposition~\ref{pro:itinerary}. Moreover, if an element of $\wt$ has a positive itinerary of the form $(\wt_1 := \wt, \wt_2, ... , \wt_n, ....)$,
it obviously belongs to $\wA(\wt, \wt')$.

The only remaining point to check is that $\wA(\wt, \wt')$ is a stable band. But this follows immediatly from Lemma~\ref{le:memefeuille}.

It is very useful to give more information here: if $n = 2$ then $\wA(\wt,\wt')$ is a stable
{\underline {elementary}} band which projects to an open annulus in $T$. If $n > 2$ then $\wA(\wt,\wt')$ is
a stable band which is not elementary. For simplicity we describe the case $n = 3$. With the notation
above, then $\wA(\wt_2,\wt_3)$ is a stable elementary band as proved in
Proposition~\ref{pro:debutitinerary}. This band in $\wt_2$ intersects the unstable elementary
bands of $\wt_2$ in open squares. The one which has points flowing back to $\wt_1$ has
boundary made up of two stable sides $a_1, a_2$ which are contained in leaves of
$\widetilde{\gsg}(\wt_2)$, and unstable sides $b_1, b_2$ contained
in leaves of $\widetilde{\gug}(\wt_2)$.
Flowing back to $\wt = \wt_1$ (in this case) produces
$\wA(\wt,\wt')$. The arcs $b_1, b_2$ flow back towards two vertical periodic orbits,
without ever reaching them.
The arcs $a_1, a_2$ flow back to two full stable leaves $a'_1, a'_2$ of
$\widetilde{\gsg}(\wt)$.
Then $\wA(\wt,\wt')$ is the stable band with boundary $a'_1, a'_2$. This is not an elementary stable band.
In fact more is true: this band is strictly contained in a unique elementary
band and does not share a boundary component with this elementary band.
Finally this stable band projects injectively to $T$ $-$ unlike
what happens for elementary bands.

If $n > 3$ this process can be iterated.
Using the notation above the stable band bounded by $a'_1, a'_2$ intersects
the unstable elementary bands in their lifted torus in squares.
When flowing backwards, the same behavior
described above occurs.
\end{proof}

\begin{define}
\label{def:n}
Let $\wt$, $\wt'$ be two elements of $\cT$. We define the \textit{(signed) distance} $n(\wt, \wt')$ as follows:

-- if $\wt = \wt'$, then $n(\wt, \wt')=0$,

-- if $\wA(\wt, \wt')$ is non empty, then $n(\wt, \wt')$ is the integer $n$ such that for every element
$\wx$ of $\wA(\wt, \wt')$, we have $\wt_{n+1}(\wx) = \wt'$; and $n(\wt', \wt) = -n(\wt, \wt')$,

-- if $\wA(\wt, \wt')$ and $\wA(\wt', \wt)$ are both empty, then $n(\wt, \wt') = n(\wt', \wt) = \infty$.
\end{define}

\subsection{Behavior of the first return map}
\label{sub:stablestable}
Let us consider once more two successive elements $\wt$, $\wt'$ of $\cT$, i.e. such that $n(\wt, \wt')=1$. Recall that there is a stable elementary band $\wA := \wA(\wt, \wt') \subset \wt$ bounded by two stable leaves $l_1$, $l_2$,
and an unstable elementary band $\wA'  \subset \wt'$ bounded by two unstable leaves $l'_1$, $l'_2$, such that orbit of $\wwp$  intersecting
$\wt$ and $\wt'$ intersects them in precisely $\wA$, $\wA'$, respectively. The union of all these orbits is a region of $\mi$ bounded by (see fig. \ref{fig:trapp}):

-- $\wA$ and $\wA'$;

-- two stable bands $S^1_\alpha$, $S^1_\beta$ where $\alpha$, $\beta$ are lifts of periodic orbits,

-- two unstable bands $U^1_\alpha$, $U^1_\beta$.

\noindent
More specifically here
$S^1_{\alpha}$ denotes the unique component of
$\wws(\alpha) - (\alpha \cup \wt)$ whose closure intersects
both $\alpha$ and $\wt$. Its boundary is the union of $\alpha$ and a stable leaf
in $\wt$.
We call such a $3$-dimensional region the \textit{block defined by $\wt$, $\wt'$;} the orbits $\alpha$, $\beta$ are the \textit{corners} of the block.

\vskip .15in
\noindent
{\bf {The map $f_{\wt,\wt'}$}} $-$
We have a well-defined map $f_{\wt,\wt'}: \wA \to \wA'$, mapping every point to the intersection between its positive $\wwp$-orbit and $\wt'$. As long as there is no ambiguity on $\wt$ and $\wt'$, we will denote $f_{\wt,\wt'}$ by $f$.

\vskip .09in
Clearly, if two elements of $\wA$ lie on the same leaf of $\widetilde{\mathcal G}^s(\wt)$, then $f(\wx)$ and $f(\wx')$ lie on the same leaf of $\widetilde{\mathcal G}^s(\wt')$.
In other words, $f$ induces a map

$$f^s  :=  f_{\wt,\wt'}^s: \ (l_1, l_2) \ \to \ {\mathcal H}^s(\wt'),$$

\noindent
where $(l_1, l_2)$
is the open segment of the leaf space $\widetilde{\mathcal H}^s(\wt)$ delimited by
the boundary leaves $l_1$ and $l_2$ of $\wA$.

Since every leaf of $\widetilde{\mathcal G}^s(\wt')$ intersects every leaf of $\widetilde{\mathcal G}^u(\wt')$, it follows that $f^s$ is
{\underline {surjective}}:
if $v$ is a leaf of $\widetilde{\mathcal G}^s(\wt')$ then the intersection
property implies that $v$ intersects $l'_1$.
Then $v$ intersects $\wA'$ so $v$ is in the image of $f$.
Moreover, the intersection between every leaf of $\widetilde{\mathcal G}^s(\wt')$ and $\wA'$ is connected, hence $f^s$ is one-to-one.

Now assume that as in fig. \ref{fig:tilt} the flow along the ``periodic orbit" $\alpha$ is going up. Then, $\beta$ is going down. It follows that the map $f$ has the following behavior: points in $\wA$ close to $l_1$ are sent by $f$ in the top direction of $\wA'$, meaning that the closer to $l_1$ is the point $\wx$, the upper is the image $f(\wx)$. Indeed, the closer to $l_1$ is $\wx$, the longest is the period of time the positive orbit of $\wx$ will follow the vertical direction of $\alpha$. On the other hand, when $\wx$ is going near to $l_2$, the image $f(\wx)$ will be going the closer to $l'_2$, and in the bottom direction.

We already know that stable leaves in $\wt'$ cross the two sides $l'_1$ and $l'_2$ of $\wA'$, hence can be drawn as in the picture in a nearly horizontal way (since we have drawn $l'_{1,2}$ as vertical lines). Therefore, stable leaves in $\wA$, which are the pull-back by $f$ of stable leaves in $\wt$, are as depicted in fig. \ref{fig:tilt}: if we describe such a leaf $s$ from the bottom to the top, the image $f(s)$ will go from the left ($l'_1$) to the right ($l'_2$).
It follows that what is at the right (respectively, at the left) of $s$ is mapped under $f$ below (respectively, above) $f(s)$.

Observe that if we reverse the direction of the flow on $\alpha$ (and hence also on $\beta$), when we would have the opposite behavior: $f$ would map what is on the right of $s$ above $f(s)$.

\begin{figure}
\centeredepsfbox{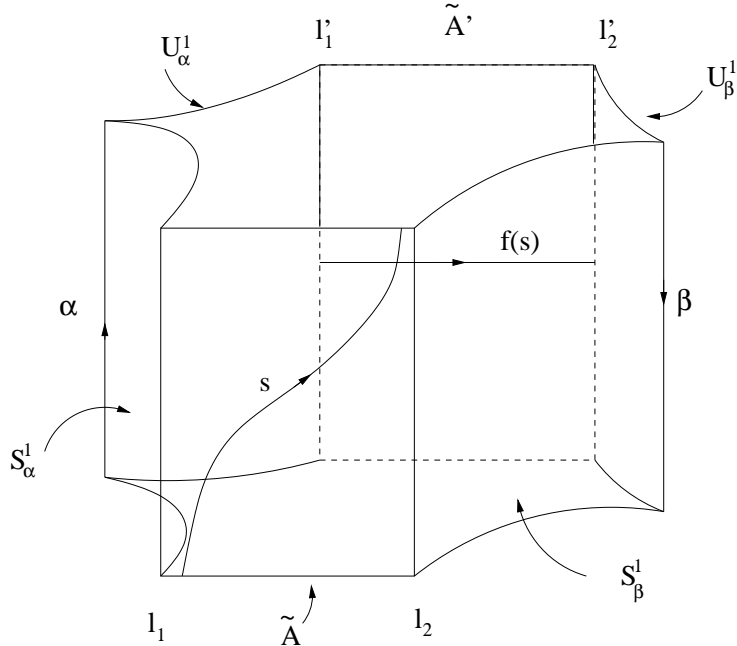}
\caption{
Pushing along the flow a stable leaf in an elementary band.}
\label{fig:tilt}
\end{figure}

Now recall that we have arbitrarily fixed an orientation of ${\mathcal H}^s(\wt)$ and ${\mathcal H}^s(\wt')$
(Remark~\ref{rk:orientons}). It is equivalent to prescribe a total order $\prec$ on ${\mathcal H}^s(\wt)$ and
${\mathcal H}^s(\wt')$. Assume e.g. that the positive
orientation on ${\mathcal H}^s(\wt)$ - which is a space of roughly vertical lines in $\wA$ - is from the left to the right; and assume that the orientation on ${\mathcal H}^s(\wt')$ is from the top to the bottom. Then, if $\alpha$ is oriented from the bottom to the top (as in the figure), the map $f^s$ preserves the orientation, whereas if $\alpha$ as the inverse orientation, $f^s$ reverses the orientations.

\vskip .18in
\noindent
{\bf {The map $f_{\wt,\wt'}$ when $n(\wt,\wt') > 1$}} $-$
Now if $\wt$, $\wt'$ are elements of $\cT$ with $n(\wt, \wt') = n > 1$, we still have a map
$f_{\wt, \wt'}: \wA(\wt, \wt') \to \wA'(\wt, \wt')$ where $\wA(\wt, \wt')$ is a stable band
in $\wt$ and $\wA'(\wt, \wt')$ an unstable band in $\wt'$. More precisely, the positive itinerary of orbits starting from $\wt$ and crossing $\wt'$ starts by $\wt_1 := \wt$, $\wt_2$, ... , $\wt_n := \wt$. Then $f_{\wt, \wt'}$ is the composition of all the $f_{\wt_i, \wt'_{i+1}}$.
The fact that the domain of
$f_{\wt,\wt'}$ is a stable band was proved in the end of the proof
of proposition \ref{pro:itinerary}. To get that the image is an unstable
band notice that flowing
backwards instead of forwards shows this fact (as the image is the domain
of the inverse map).

Exactly as in the case that $n(\wt,\wt') = 1$, it follows that the
map $f_{\wt,\wt'}$
also induces a
{\bf {surjective}} map $f^s_{\wt, \wt'}$ from a segment of ${\mathcal H}^s(\wt)$ onto
the entire ${\mathcal H}^s(\wt')$. This map can preserve the orientation or not; this property
depends on the orientation of the corners of the Birkhoff annuli successively crossed.

\subsection{Realization of itineraries}
\label{su.realization}





We define an oriented graph $\widetilde{\mathfrak G}$ as follows:

\begin{itemize}
  \item vertices are elements of $\cT$,
  \item edges are Birkhoff bands,
  \item the initial vertex and the final vertex of an oriented edge $E$ are the elements $\wt$, $\wt'$ of $\cT$ such that there are orbits of $\wwp$ intersecting $\wt$ at a point $\wx$, then crossing
$E$, and crossing afterwards $\wt'$ at a point $\wx'$. We require furthermore that $E$ is the unique Birkhoff band intersected by the orbit between $\wx$ and $\wx'$; in other words, that $n(\wt, \wt')=1$.
\end{itemize}


We add to $\widetilde{\mathfrak G}$ some vertices: the set $\Delta$ of lifts of vertical periodic orbits. These new vertices will not be connected one to the other, but only
to vertices of $\widetilde{\mathfrak G}$: we add an edge oriented from $\wt$ to $\alpha$ (respectively from $\alpha$ to $\wt$) if some element $\wx$ of $\wt$ has a positive (respectively negative) orbit accumulating on $\alpha$
without intersecting any element of $\cT$. The result is the \textit{augmented graph} $\widetilde{\mathfrak G}^\sharp$.


\begin{lemma}
\label{le:connected}
The graphs $\widetilde{\mathfrak G}$ and $\widetilde{\mathfrak G}^\sharp$ are (weakly) connected.
\end{lemma}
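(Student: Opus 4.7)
The plan is to proceed in three stages: a ``dual'' connectivity between lifted Seifert pieces, the internal connectivity within a single lifted piece, and finally the extension to $\widetilde{\mathfrak G}^\sharp$.

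First, I introduce an auxiliary graph $\widetilde{\mathfrak D}$ whose vertices are the connected components of $\mi \setminus \mathcal T$, each of which is a lift of some $N(Z_i) \setminus \partial N(Z_i)$, and whose edges are the elements $\widetilde T \in \mathcal T$, each joining the two components it separates. Since $\mi$ is connected and the elements of $\mathcal T$ are properly embedded planes that separate $\mi$, the graph $\widetilde{\mathfrak D}$ is connected.

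Second, I will show that for each connected lifted piece $\widetilde N$ with spine $\widetilde Z$, all the boundary components of $\widetilde N$ in $\mathcal T$ lie in a single component of $\widetilde{\mathfrak G}$. Every Birkhoff band $\widetilde A \subset \widetilde Z$ (a lozenge, by Remark \ref{rk.lozengeannulus}) provides an edge of $\widetilde{\mathfrak G}$ between the two boundary tori $\widetilde T_-(\widetilde A), \widetilde T_+(\widetilde A) \in \mathcal T$ pierced by orbits of $\wwp$ through $\widetilde A$ in backward and forward time respectively. The key claim is the following: if two Birkhoff bands $\widetilde A_1, \widetilde A_2 \subset \widetilde Z$ share a corner $\widetilde \alpha$, then their corresponding $\widetilde{\mathfrak G}$-edges share a vertex. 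Indeed, the associated lozenges $B_1, B_2$ with common corner $\widetilde \alpha$ must be adjacent; this is forced by Lemma \ref{localtrans} combined with Proposition \ref{goodposition}, which guarantees that $\partial N(Z_i)$ is transverse to $\Phi$. Say they share an unstable half-leaf $U$ of $\widetilde \alpha$. Then orbits of $B_1$ and of $B_2$ sufficiently close to $U$ shadow $U$ in forward time before exiting $\widetilde N$, so both exit through the unique boundary torus $\widetilde T^*$ pierced by $U$; hence $\widetilde T_+(\widetilde A_1) = \widetilde T^* = \widetilde T_+(\widetilde A_2)$. The stable case is symmetric.

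Since $\widetilde Z$ is connected and is built by gluing Birkhoff bands along their corners, the key claim propagates along $\widetilde Z$, showing that all boundary tori of $\widetilde N$ realized as $\widetilde T_\pm(\widetilde A)$ for some $\widetilde A \subset \widetilde Z$ lie in a single component of $\widetilde{\mathfrak G}$. Moreover, every boundary torus of $\widetilde N$ is realized: each elementary $\widetilde{\mathcal G}^s$-annulus of such a torus pushes forward along the flow to an elementary Birkhoff annulus in $Z_i$, whose lift is a Birkhoff band of $\widetilde Z$ having this torus as $\widetilde T_-$ or $\widetilde T_+$. Combining this with the first stage, any two $\widetilde T, \widetilde T' \in \mathcal T$ are joined by a $\widetilde{\mathfrak D}$-path that visits a finite sequence of lifted pieces; inside each visited piece the two incident boundary tori are linked in $\widetilde{\mathfrak G}$ by the second stage, yielding a $\widetilde{\mathfrak G}$-path from $\widetilde T$ to $\widetilde T'$.

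Finally, to pass from $\widetilde{\mathfrak G}$ to $\widetilde{\mathfrak G}^\sharp$, I observe that each $\alpha \in \Delta$ is a corner of some Birkhoff band in a lifted spine $\widetilde Z$, and that a stable or unstable half-leaf of $\alpha$ bounding an adjacent lozenge meets a boundary torus of the piece, producing by definition an edge of $\widetilde{\mathfrak G}^\sharp$ between $\alpha$ and that torus. The main obstacle is the key claim in the second stage, which converts the corner-sharing relation among Birkhoff bands in the spine into vertex-sharing of their $\widetilde{\mathfrak G}$-edges; this rests crucially on the fact that the transversality of $\partial N(Z_i)$ to $\Phi$ forces any two lozenges in $\mi$ with a common corner at a lift of a vertical periodic orbit to be adjacent rather than separated by an intermediate prong.
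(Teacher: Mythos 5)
Your route is genuinely different from the paper's, which is much softer: for each $\wt \in \cT$ it considers the set $U_\infty(\wt)$ of points of $\mi \setminus \Delta$ reachable from $\wt$ by concatenating segments of $\wwp$-orbits with paths inside elements of $\cT$; these sets are open, pairwise disjoint or equal, and cover $\mi \setminus \Delta$, which is connected, so they all coincide, and an orbit crossing a sequence of tori traces a path in $\widetilde{\mathfrak G}$. Your decomposition into the (tree-like) dual graph of lifted Seifert pieces plus internal connectivity of each piece through its spine is a legitimate, more structural alternative, and your first and third stages are fine.

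However, the key claim of your second stage is false as stated, and the cited results do not give it. Two lozenges $B_1, B_2$ with a common corner $\widetilde\alpha$ need \emph{not} be adjacent: around the lift of a regular vertical periodic orbit there can be up to four lozenges with corner $\widetilde\alpha$ (one in each quadrant of $\mi \setminus (\wws(\widetilde\alpha)\cup\wwu(\widetilde\alpha))$), and up to $2p$ around a $p$-prong, and only cyclically consecutive ones are adjacent. The paper even emphasizes, in the discussion following Proposition \ref{goodposition}, that \emph{every} lozenge with corner $\widetilde\alpha$ contains a Birkhoff band of the spine; so there really are pairs of Birkhoff bands sharing a corner whose lozenges are separated by intermediate prongs, and whose edges in $\widetilde{\mathfrak G}$ need not share a vertex. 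Lemma \ref{localtrans} only concerns the two lozenges that are consecutive along a given Birkhoff torus through $\alpha$. The gap is local and reparable: cyclically consecutive lozenges around $\widetilde\alpha$ \emph{are} adjacent (this is what Proposition \ref{transtor} and the remarks after Proposition \ref{goodposition} provide), the shared side alternates between stable and unstable half-leaves, and your exit/entry argument shows that the corresponding edges share alternately their $\widetilde T_+$ or their $\widetilde T_-$ endpoint. Chaining around the corner places all edges arising at a given corner in a single component of $\widetilde{\mathfrak G}$, which is all your second stage actually requires. You should therefore weaken the key claim to ``the corresponding edges lie in the same component of $\widetilde{\mathfrak G}$'' and justify it by this chain, rather than by an adjacency that fails in general.
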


\begin{proof}
Recall that an oriented graph is weakly connected if the underlying non-oriented graph is connected, i.e. if
any pair of vertices can be connected by a sequence of edges.
It is quite obvious that $\widetilde{\mathfrak G}^\sharp$
is weakly connected as soon as $\widetilde{\mathfrak G}$ is weakly connected.

For every element $\wt$ of $\cT$ let $U_1(T)$ the open subset of $\mi$ comprising elements attained by an orbit of $\wwp$ intersecting $\wt$, and let $W_1(\wt)$
be the set of elements of $\cT$ which can be joined to $\wt$ by an orbit of $\wwp$. Define then inductively:

\begin{eqnarray*}
    U_{i+1}(\wt) & = & \bigcup_{\wt' \in W_i(\wt)} U_1(\wt')\\
    W_{i+1}(\wt) & = & \bigcup_{\wt' \in W_i(\wt)} W_1(\wt')
\end{eqnarray*}

We obtain an increasing sequence of domains $U_i(\wt)$ whose union $U_\infty(\wt)$ is an open subset of $\mi$; more precisely,
of $\mi \setminus \Delta$ (recall that $\Delta$ is the union of lifts of vertical periodic orbits).
In a less formal way, one can define $U_\infty(\wt)$ as the set of elements of $\mi \setminus
\Delta$ which are attainable from $\wt$  through
concatenations of segments of orbits of $\wwp$ and paths in elements of $\cT$.

The domains $U_\infty(\wt)$ where $\wt$ are elements of
$\cT$ are either pairwise disjoint or equal.
Since any orbit of $\wwp$ which is not the lift
of a vertical periodic orbit
intersects one element of $\cT$, it follows that
the union of these domais is the entire set $\mi \setminus \Delta$. This domain is
connected, hence $\mi \setminus \Delta = U_\infty(\wt)$
for every $\wt$ in $\cT$. In particular, for every $\wt$, $\wt'$ in $\cT$, there is an integer $i$ such that $\wt'$ lies in $W_i(\wt)$.

Now observe that the sequence of elements of $\cT$ successively crossed by an orbit of $\wwp$ defines a path in $\widetilde{\mathfrak G}$.
It follows that $\widetilde{\mathfrak G}$ is weakly connected, as required.
\end{proof}

As we have observed in the previous proof, every oriented path in $\widetilde{\mathfrak G}^\sharp$ defines naturally an element of ${\mathcal I} = \cT_\sharp^{\mathbb Z}$.
Recall that $\cT_\sharp = \cT \cup \Delta$.

Let ${\mathcal I}_0 \subset \mathcal I$
be the subset of ${\mathcal I}$ comprising sequences $(\zeta_i)_{i \in \mathbb Z}$ corresponding to
oriented paths in $\widetilde{\mathfrak G}^\sharp$
satisfying the following additional property:

\begin{center}
    \textit{if $\zeta_i$ is an element of $\Delta$, then $\zeta_j = \zeta_i$ for either all $j \leq i$, or all $j \geq i$.}
\end{center}

\begin{proposition}
\label{prop:imageI}
The image of the itinerary map $I: \mi \to \cT_\sharp^{\mathbb Z}$ is precisely ${\mathcal I}_0$.
\end{proposition}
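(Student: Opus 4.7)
We verify both inclusions $I(\mi) \subseteq \mathcal I_0$ and $\mathcal I_0 \subseteq I(\mi)$.

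The inclusion $I(\mi) \subseteq \mathcal I_0$ is essentially tautological. For any $\wx \in \mi$ and any $i$, the pair $(I_i(\wx), I_{i+1}(\wx))$ either consists of two successively crossed tori (so $n(I_i(\wx), I_{i+1}(\wx))=1$ and the pair is an edge of $\widetilde{\mathfrak G}$), or represents a transition between a torus and some $\alpha \in \Delta$ onto which the orbit accumulates without further torus crossings (an added edge of $\widetilde{\mathfrak G}^\sharp$), or consists of two copies of the same $\alpha$ lying inside a $\Delta$-tail. The convention of appending a constant $\alpha$-tail to the itinerary of a trapped orbit is exactly the extra property defining $\mathcal I_0$.

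For the reverse inclusion, fix $(\zeta_i) \in \mathcal I_0$. If $(\zeta_i)$ is constant equal to $\alpha \in \Delta$, any point of $\alpha$ realizes it. Otherwise, after shifting indices we may assume $\zeta_0 \in \cT$, and the plan is to construct $\wx_\infty = s \cap u \in \zeta_0$ where $s$ is a leaf of $\widetilde{\mathcal G}^s(\zeta_0)$ realizing the positive half of the itinerary and $u$ is a leaf of $\widetilde{\mathcal G}^u(\zeta_0)$ realizing the negative half. The product structure $\zeta_0 \approx \mathcal H^s(\zeta_0) \times \mathcal H^u(\zeta_0)$ makes $s \cap u$ a single point, and Lemmas~\ref{le:memefeuille} and~\ref{le:memefeuille2} together with Proposition~\ref{pro:memefeuille2} ensure that $s$ determines $I^+(\wx_\infty)$ and $u$ determines $I^-(\wx_\infty)$. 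By symmetry I describe only the construction of $s$.

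If the positive itinerary is finite, terminating with an $\alpha$-tail after $\zeta_0,\ldots,\zeta_N$, then Claim~$1$ in the proof of Proposition~\ref{pro:debutitinerary} shows that $\wws(\alpha) \cap \zeta_N$ is a single leaf of $\widetilde{\mathcal G}^s(\zeta_N)$; its preimage under the homeomorphism $f_{\zeta_0, \zeta_N}$ is the required $s$. If the positive itinerary is infinite, consider the nested stable bands $\wA_n := \wA(\zeta_0,\zeta_n)$ supplied by Proposition~\ref{pro:itinerary}, corresponding to open intervals $I_n = (a_n, b_n)$ in $\mathcal H^s(\zeta_0) \cong \mathbb R$. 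Non-emptiness of each $\wA_n$ follows by induction on $n$: the existence of the edges $(\zeta_k, \zeta_{k+1})$ in $\widetilde{\mathfrak G}$ gives $\wA(\zeta_k,\zeta_{k+1}) \neq \emptyset$, and the surjectivity of each $f^s_{\zeta_0, \zeta_k}$ onto $\mathcal H^s(\zeta_k)$ guarantees a non-empty pullback. Moreover $\wA(\zeta_k, \zeta_{k+1})$ is always a bounded, proper subinterval of $\mathcal H^s(\zeta_k)$, since orbits approaching either of the two corner vertical periodic orbits of the Birkhoff band joining $\zeta_k$ to $\zeta_{k+1}$ peel off and never cross $\zeta_{k+1}$. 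Pulling this properness back through the homeomorphisms $f^s$ forces the strict nesting $I_{n+1} \subsetneq I_n$, with $a_n$ strictly increasing and $b_n$ strictly decreasing. Setting $a_\infty := \lim a_n$ and $b_\infty := \lim b_n$, both limits are finite (the sequences are monotone and bounded inside $I_1$), and we have $a_n < a_\infty \leq b_\infty < b_n$ for every $n$; hence $a_\infty$ belongs to every open $I_n$. Take $s$ to be the stable leaf with coordinate $a_\infty$.

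The main obstacle, and the real content of the argument, is producing $s \in \bigcap_n I_n$ (interior), rather than merely in $\bigcap_n \overline{I_n}$: a leaf lying on the boundary of some $\wA_n$ would correspond to an orbit peeling off to a vertical periodic orbit and failing to realize the full itinerary. The strict-nesting observation above, whose key input is the properness of each $\wA(\zeta_k, \zeta_{k+1})$ inside $\mathcal H^s(\zeta_k)$, is precisely what resolves this difficulty. Once $s$ and $u$ are in hand, the point $\wx_\infty := s \cap u$ lies in every $\wA_n$ and in every unstable band $\widetilde B_m$ associated to the negative itinerary, so the final statement of Proposition~\ref{pro:itinerary} applied in both directions forces $I(\wx_\infty) = (\zeta_i)$, completing the proof.
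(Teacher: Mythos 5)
Your proof is correct and follows essentially the same route as the paper's: the forward inclusion is handled tautologically, and the reverse inclusion splits into the finite and infinite positive-itinerary cases, with the crux in the infinite case being that both endpoints of the nested open intervals $J(\zeta_0,\zeta_n)$ move strictly, so the intersection is non-empty. Your packaging of that crux (the elementary segment $J(\zeta_n,\zeta_{n+1})$ is precompact in $\mathcal H^s(\zeta_n)\cong\mathbb R$, and $f^s_{\zeta_0,\zeta_n}$ is a monotone bijection of $I_n$ onto all of $\mathcal H^s(\zeta_n)$, so its preimage is compactly contained in $I_n$) is a clean restatement of the fact the paper extracts from the end of the proof of Proposition~\ref{pro:itinerary}, namely that $\wA(\zeta_0,\zeta_{n+1})$ shares no boundary component with $\wA(\zeta_0,\zeta_n)$.
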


\begin{proof}
The fact that the image of $I$ is contained in ${\mathcal I}_0$ is quite obvious since if an itinerary
$I(\wx) = (\zeta_i)_{i \in \mathbb Z}$ has a term $\zeta_i$ equal to $\alpha \in \Delta$,
then either $\wx \in \wwu(\alpha)$, or $\wx \in \wws(\alpha)$. In the first case, $\zeta_j = \alpha$ for all $j \leq i$, whereas in the second case
$\zeta_j = \alpha$ for all $j \geq i$.

Let now $(\zeta_i)_{i \in \mathbb Z}$ be an element of ${\mathcal I}_0$.
If every $\zeta_i$ lies in $\Delta$, then they are all equal (since there is no edge in
$\widetilde{\mathfrak G}^\sharp$ connecting two different elements of $\Delta$): every $\zeta_i$ is
equal to $\alpha \in \Delta$. Then the sequence is the itinerary
of any element of $\alpha$.

Assume now that some $\zeta_i$ is an element $\wt$ of $\cT$, suppose this is
$\zeta_1$.
Consider positive integers $n$: as long as $\zeta_n$ is an element of $\cT$ (and not of $\Delta$), then the signed distance $n(\zeta_1, \zeta_n)$ (in the sense of Definition~\ref{def:n}) is $+n$. It follows that $\wA(\zeta_1, \zeta_n)$ is a (non-empty!)
stable band (usually not elementary). At the leaf space level
($\widetilde{\mathcal H}^s(\wt)$), the projection of $\wA(\zeta_1, \zeta_n)$
is an open segment $J(\zeta_1, \zeta_n)$ in $\widetilde{\mathcal H}^s(\wt) \approx \mathbb R$.

Assume first the case where the positive itinerary is finite: there is an integer
$n>0$ such that $\zeta_i \in \cT$ for all $i\leq n$, and such that $\zeta_{n+1}$ is an element $\alpha$
of $\Delta$. Then, there is one (and only one) stable leaf $s_0$ of $\zeta_n$ whose elements has positive itinerary $(\alpha, \alpha, ...)$. Since $f^s_{\zeta_1,\zeta_n}$ is surjective,
there is a stable leaf $s$ in $J(\zeta_1, \zeta_n)$ whose image by $f^s_{\zeta_1,\zeta_n}$
is $s_0$. Then, the positive itinerary of elements of $s$ is, as required, $(\zeta_i)_{i\geq 1}$.

Consider now the other case, that is, the case where every $\zeta_i$ ($i>0$) is an element of $\cT$.
Then, the segments \ $(J(\zeta_1, \zeta_i)_{i\geq 1})$ \
form a decreasing (for the inclusion) sequence of intervals in $\widetilde{\mathcal H}^s(\wt) \approx \mathbb R$.
In fact more is true. The explanation at the end of the proof of proposition
\ref{pro:itinerary} shows that when $n$ increases by one, then {\underline {both}}
endpoints of $J(\zeta_1,\zeta_n)$ change. This follows from the fact in that
explanation that the band inside the elementary band did not share a boundary
component with the elementary band.
Given this fact, it follows that the intersection of
the $J(\zeta_1,\zeta_n)$ is non-empty. Every $\wx$ in $\zeta_1$ whose projection lies in this intersection will admit as positive itinerary $(\zeta_i)_{i\geq 1}$.

In both situations, we have a non-empty stable band $\wA((\zeta_i)_{i\geq 1})$ comprising elements
of $\cT$ with positive itinerary $(\zeta_i)_{i\geq 1}$. Observe that according to
Lemma~\ref{le:memefeuille2}, $\wA((\zeta_i)_{i\geq 1})$ is a single stable leaf $-$ that
is, it is a degenerate stable band.

By applying this argument to the reversed flow, one gets that the set of elements of $\zeta_0$ whose negative itinerary coincide with
$(\zeta_i)_{i \leq 0}$ is an \textit{unstable} leaf. Since in the plane $\zeta_1$ every unstable leaf intersects every stable leaf, we obtain
that $\zeta_1$ contains exactly one element whose itinerary is precisely $(\zeta_i)_{i \in \mathbb Z}$.
\end{proof}

\section{Topological and isotopic equivalence}

Let $\Phi$, $\Psi$ be two totally periodic pseudo-Anosov flows, and let $\{ Z_i(\Phi)\} $, $\{ Z_i(\Psi) \} $
be respective (chosen) spine collections.

\subsection{Topological equivalence}
\label{sub.theoremD}
In this section we show how to deduce Theorem D from Theorem D'. 

\vskip .08in
\noindent
{\bf {Proof of theorem D}} 
 $-$ First suppose that $\Phi$
and $\Psi$ are topologically equivalent. Let $f$ be a self homeormorphism
of $M$ realizing this equivalence.
Consider the two JSJ decompositions $\{ f(N(Z_i(\Phi)) \}$ and $\{ N(Z_j(\Psi)) \}$ 
of $M$. By uniqueness these are isotopic, so for any $i$ there is
a $j = j(i)$ so that $f(N(Z_i(\Phi))$ is isotopic to $N(Z_{j(i)})$.
The map $f$ induces an isomorphism between the subgroups
$\pi_1(P_i)$ and $\pi_1(P_j)$. 
In \cite{bafe} the spine $Z_i(\Phi)$ was constructed as follows:
consider all elements $g$ in $\pi_1(P_i)$ which act freely in the orbit
space of $\wwp$. Each such $g$ leaves invariant a unique  bi-infinite chain
of lozenges ${\mathcal C}$, the corners of which project
to periodic orbits of $\Phi$. The union of these periodic orbits over all
$g$ gives the vertical periodic orbits of $P_i$, which hence depend
only on the flow $\Phi$ and the piece $P_i$.
The Birkhoff annuli in $Z_i$ are elementary and they are
determined up to $\Phi$-flow isotopy. These facts are equivariant under the map $f$.
This means that 
$f$ sends the vertical orbits of $Z_i(\Phi)$ to the vertical 
orbits of $Z_j(\Psi)$ and takes the Birkhoff annuli in $Z_i(\Phi)$
to Birkhoff annuli in $M$ which are $\Psi$-flow isotopic to the
Birkhoff annuli in $Z_j(\Psi)$. We can postcompose the map $f$ with 
the $\Psi$-flow isotopy
so that the conjugacy $f'$ takes $Z_i(\Phi)$ to $Z_j(\Psi)$.
The conjugacy $f'$ preserves the orientation of the vertical orbits.
This finishes the proof of this direction.

\vskip .1in
Conversely, assume that up to a homeomorphism, we have the equality $\{ Z_i(\Phi) \}
  = \{ Z_j(\Psi) \}$,
and that the two flows define the same orientation on the vertical orbits. 
Up to reindexing the collection $\{ Z_j(\Psi) \}$ we can assume that for all
$i$, $Z_i(\Phi) = Z_i(\Psi)$.
Since the JSJ decomposition 
of $M$ is unique up to isotopy \cite{Ja-Sh,Jo}, it follows that
$N(Z_i(\Phi))$ is isotopic to 
$N(Z_i(\Psi))$ for all $i$. 
A torus $T$ boundary of $N(Z_i(\Phi))$ and $N(Z_j(\Phi))$ is isotopic to a corresponding
boundary torus $T'$ between $N(Z_i(\Psi))$ and $N(Z_j(\Psi))$.
We can then change the flow $\Psi$ by an isotopy so that $T' = T$.
Hence we can assume that $N(Z_i(\Phi)) = N(Z_i(\Psi))$.
We simplify the notations by setting $Z_i = Z_i(\Phi) = Z_i(\Psi)$ and $N(Z_i) = N(Z_i(\Phi)) = N(Z_i(\Psi))$.

For each component $T_k$ of $\partial N(Z_i)$, let ${\mathcal G}^{s,u}_\Phi(T_k)$ and ${\mathcal G}^{s,u}_\Psi(T_k)$ be the foliations on $T_k$ induced by the stable/unstable foliations
of respectively $\Phi$, $\Psi$.
Of course, a priori there is no reason for ${\mathcal G}^{s,u}_\Phi(T_k)$ and ${\mathcal G}^{s,u}_\Psi(T_k)$ 
to be equal. 
However we prove the following crucial properties:

\vskip .08in
\noindent
{\bf {Claim}} $-$ The two foliations 
${\mathcal G}^{s,u}_\Phi(T_k)$ and ${\mathcal G}^{s,u}_\Psi(T_k)$ 
have the same number of closed leaves, and these leaves
are all vertical.  It follows that these closed curves are isotopic.
In addition one can assume that the elementary ${\mathcal G}^{s,u}_\Phi$-annuli are exactly
the elementary ${\mathcal G}^{s,u}_\Psi$-annuli.

Consider the component $W_k$ of $N(Z_i) - Z_i$ containing
$T_k$ in its boundary. Then $W_k$ is homeomorphic
to $T^2 \times [0,1)$, where $T_k = T^2 \times \{ 0 \}$ is a boundary 
component of $N(Z_i)$ and is therefore entering or exiting $N(Z_i)$.
Suppose without loss of generality that $T_k$ is an outgoing component.
By the description of the flow in $N(Z_i)$ every point in $W_k$ flows backward to
intersect $Z_i$ or be asymptotic to a vertical orbit in $Z_i$.
Let $\alpha$ be a vertical orbit of $Z_i$. 
Since $T_k$ is outgoing it follows
that unstable leaf of 
$\alpha$ intersects $W_k$ and consequently this unstable leaf intersects 
$T_k$,
and in a closed leaf. It follows that for any such $\alpha$ there is
a closed leaf of ${\mathcal G}^s_{\Phi}(T_k)$ and a closed leaf of
${\mathcal G}^s_{\Psi}(T_k)$. These closed curves in $T_k$ have
powers which are freely homotopic to powers of the regular fiber
in $P_i$ and hence they have powers which are freely homotopic
to each other. 
Since both are simple closed curves in $T_k$ and $N(Z_i)$ is Seifert fibered,
it now follows that 
these closed leaves are isotopic in $T_k$.
This proves that that 
${\mathcal G}^{s,u}_\Phi(T_k)$ and ${\mathcal G}^{s,u}_\Psi(T_k)$ have the
same number of closed leaves and they are all isotopic.

The manifold $M$ is obtained from the collection of Seifert pieces
$\{ P_i \}$ by glueings along the collection of tori $\{ T_k \}$.
Isotopic glueing maps of the $\{ T_k \}$ generate the same manifold.
Hence we can change the glueing maps and have a homeomorphism of
$M$ which sends the closed leaves of ${\mathcal G}^{s,u}_\Phi(T_k)$
to the closed leaves of ${\mathcal G}^{s,u}_\Psi(T_k)$.


The unstable vertical annuli for $\Phi$ (respectively for $\Psi$)
connect the vertical periodic orbits to the closed leaves in $T_k$. Now the point is that
the vertical annuli for $\Psi$ may not be isotopic rel boundary to the vertical annuli for $\Phi$:
they may wrap around $T_k$, intersecting several times the vertical annuli for $\Phi$.
However, there is an orientation preserving homeomorphism which is the identity outside 
$\overline W_k$, 
inducing a Dehn twist around $T_k$ in the mapping class group of $M$, which maps every $\Phi$-unstable annulus
to the corresponding $\Psi$-unstable annulus. 
The completion of $W_k$ is a manifold which is a quotient of $T^2 \times [0,1]$.
The only identifications are in ``vertical" orbits in $T^2 \times \{ 1 \}$. The maps
above induce a homeomorphism of this quotient of $T^2 \times [0,1]$ which is the identity in
the boundary. It follows that this is isotopic to a ``Dehn twist" in the boundary $T^2 \times \{ 0 \}$.
We leave the details to the reader.

Now the conjugate of $\Phi$ by this homeomorphism has precisely the same spine decomposition $N(Z_i)$, the same orientation on vertical periodic orbits, and the same stable/vertical annuli
in each $N(Z_i)$. According to Theorem D', to be proved in the next section, this flow
is isotopically equivalent to $\Psi$. This finishes the proof of Theorem D.

\subsection{Isotopic equivalence}
\label{sub.theoremD'}
In this section we prove Theorem D'. Suppose first that there is a homeomorphism
$f$ isotopic to the identity,
realizing a conjugacy between $\Phi$ and $\Psi$ and preserving
direction of vertical orbits. 
As in the previous section we may assume there is such $f$ so that
$f(Z_i(\Phi)) = Z_i(\varphi)$ (after reindexing). In addition
if $\alpha$ is a vertical orbit in $Z_i(\Phi) = Z_i(\Psi)$,
then since $f$ is a topological conjugacy, it sends the stable/unstable
leaf of $\alpha$ to the stable/unstable leaf of $f(\alpha)$.
As $f(N(Z_i(\Phi)) = N(Z_i(\Psi))$, it now follows that
$f$ maps the collection of vertical annuli of $\Phi$ in $N(Z_i(\Phi))$ to
the collection of vertical annuli of $\Psi$ in $N(Z_i(\Psi))$. This proves one 
direction of theorem D'.

\vskip .05in
Conversely we assume as in the previous section that for every
$i$ we have $Z_i(\Phi) = Z_i(\Psi) = Z_i$ (after reindexing), 
that $\Phi$, $\Psi$ define the same spine
decomposition of $M$ in periodic Seifert pieces $N(Z_i)$, and that they induce the same orientation on the vertical periodic orbits in each $Z_i$.
We furthermore assume that in
each $N(Z_i)$ they have exactly the same stable/unstable vertical annuli.

Recall the following several objects we introduced in section~\ref{sec:itinerary}:

\begin{itemize}
  \item the set $\cT$ of lifts of components of $N(Z_i)$,
  \item the set $\Delta$ of lifts of vertical periodic orbits,
  \item elementary bands for $\wwp$,
\end{itemize}

The hypothesis imply that 
these objects coincide precisely with the similar objects associated to $\Psi$.

Observe that in every Seifert piece $N(Z_i)$, the ``blocks" connecting one incoming boundary torus to an outgoing torus are
delimited by vertical stable/unstable annuli, hence are exactly the same for the two flows. It follows that
the graph $\widetilde{\mathfrak G}$ and the augmented graph $\widetilde{\mathfrak G}^\sharp$ are precisely the same
for the two flows. Indeed: let $\wt$, $\wt'$ are two vertices of the graph $\widetilde{\mathfrak G}$ for $\Phi$, connected by one edge, then
there is a lift $\widetilde{N(Z_i)}$ such that $\wt$ and $\wt'$ are boundary component of $\widetilde{N(Z_i)}$, and an orbit of $\wwp$  inside $\widetilde{N(Z_i)}$
joining a point $\tilde{x}$ in $\wt$ to a point $\tilde{y}$ in $\wt'$. This orbit is trapped in the lift
$\widetilde{U}$ of a block as described in section 
4, delimited by two stable bands, two unstable bands, and two elementary bands, one in $\wt$, the other in $\wt'$ 
(we refer once again to figure \ref{fig:trapp}). It follows that
the orbit of $\widetilde{\Psi}$ starting at $\tilde{x}$ is trapped in the same lifted block $\widetilde{U}$ until
it reachs the same elementary band as the one attained by the $\wwp$-orbit, so that we have also $n(\wt, \wt') = +1$ from the view point of $\Psi$.

Hence, Proposition~\ref{prop:imageI} implies that any itinerary
of $\widetilde{\Psi}$ is realized by $\wwp$, and vice-versa.

Since the flows $\Phi$ and $\Psi$ have precisely the same blocks, with the same orientation
on the periodic corner orbits, we can apply section~\ref{sub:stablestable} and its conclusion. More precisely: for every lifted torus $\wt$, let ${\mathcal H}^s_\Psi(\wt)$ denote the leaf space of
the restriction to $\wt$ of the stable foliation of $\widetilde{\Psi}$. If $\wt'$ is another lifted torus such that $n(\wt, \wt') > 0$, we can define a map $g^s_{\wt, \wt'}$,
analogous to $f^s_{\wt, \wt'}$, from an interval of ${\mathcal H}^s_\Psi(\wt)$ onto ${\mathcal H}^s_\Psi(\wt')$.
This map is obtained using the flow $\widetilde{\Psi}$.

We also need to define the transverse orientation to the
foliations ${\mathcal G}^s_\Psi(T_k)$
and ${\mathcal G}^u_\Psi(T_k)$. These are the stable and unstable foliations
induced by the flow $\Psi$ in the JSJ tori $\{ T_k \}$.
As observed in the previous section, the closed leaves of these foliations are the same
as the closed leaves of the foliations
${\mathcal G}^s_\Phi(T_k)$
and ${\mathcal G}^u_\Phi(T_k)$ induced by $\Phi$ on $T_k$.
Choose the transverse orientation of (say)
${\mathcal G}^s_\Psi(T_k)$ to have it agree with the transverse orientation of
${\mathcal G}^s_\Phi(T_k)$ across the closed leaves.

By hypothesis, the flow directions of corresponding vertical periodic orbits of
$\Phi$ and $\Psi$ in any given Seifert piece $P_i$ agree.
This implies that the holonomy of the foliations
${\mathcal G}^s_\Phi(T_k)$ and
${\mathcal G}^s_\Psi(T_k)$ along the closed leaves agrees with each other,
that is, they are either both contracting or both repelling.

The important conclusion is that
with this choice of orientations and the above remark,
$g^s_{\wt, \wt'}$ is orientation preserving
if and only if $f^s_{\wt, \wt'}$ is orientation preserving.

Let $\oo_\Phi$, $\oo_\Psi$ denote the orbit spaces of $\wwp$, $\widetilde{\Psi}$, respectively. According to Corollary~\ref{cor:sameitinerary} applied to $\Phi$ and to
$\Psi$ as well, there is a natural bijection $\varphi: \oo_\Phi \to \oo_\Psi$: the one mapping an orbit of $\wwp$ to the unique orbit of $\widetilde{\Psi}$ admitting
the same itinerary up to the shift map. Observe that the map $\varphi$ is naturally $\pi_1(M)$-equivariant.

\begin{lemma}
\label{le:continuous}
The map $\varphi: \oo_\Phi \to \oo_\Psi$ is a homeomorphism.
\end{lemma}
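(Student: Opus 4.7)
\medskip
\noindent
\textbf{Proof plan.} The construction of $\varphi$ via itinerary matching is symmetric in $\Phi$ and $\Psi$, so the inverse bijection $\varphi^{-1}: \oo_\Psi \to \oo_\Phi$ is defined by the same procedure with the roles of the two flows reversed. It therefore suffices to show that $\varphi$ is continuous. The plan is to cover $\oo_\Phi$ by charts coming from the tori in $\cT$, reduce continuity on each chart to continuity of induced maps between leaf spaces, and finally handle the points of the orbit space that correspond to lifts of vertical periodic orbits.

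\emph{Step 1 (torus charts).} For each $\wt \in \cT$, let $U^\Phi_\wt \subset \oo_\Phi$ be the image of $\wt$ under the orbit projection associated to $\wwp$, and define $U^\Psi_\wt$ similarly for $\widetilde{\Psi}$. Because $\wt$ is a properly embedded plane transverse to $\wwp$ and separating $\mi$, each $\wwp$-orbit meeting $\wt$ does so at exactly one point, so $U^\Phi_\wt$ is open in $\oo_\Phi$ and is homeomorphic to $\wt$. Since the graphs $\widetilde{\mathfrak G}^\sharp$ for the two flows coincide by the observation made just before the present lemma, $\varphi$ carries $U^\Phi_\wt$ bijectively onto $U^\Psi_\wt$ and induces a bijection $\varphi_\wt: \wt \to \wt$. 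The union of all the $U^\Phi_\wt$ together with the points $\Theta_\Phi(\alpha)$ for $\alpha \in \Delta$ covers $\oo_\Phi$, and on the second set $\varphi$ is the tautological bijection with $\Theta_\Psi(\Delta)$. It therefore suffices to prove continuity of each $\varphi_\wt$ and continuity of $\varphi$ at each $\Theta_\Phi(\alpha)$.

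\emph{Step 2 (factoring through leaf spaces).} The periodic leaves of $\widetilde{\mathcal G}^s_\Phi(\wt)$ are exactly the traces on $\wt$ of the stable leaves of elements of $\Delta$, and similarly for $\widetilde{\mathcal G}^s_\Psi(\wt)$. Since $\Delta$ is shared by the two flows, the two foliations cut $\wt$ into the same collection of elementary stable bands; the corresponding fact for the unstable foliations holds as well. Lemma~\ref{le:memefeuille}, Proposition~\ref{pro:memefeuille2} and their verbatim analogs for $\widetilde{\Psi}$ imply that $\varphi_\wt$ maps $\widetilde{\mathcal G}^s_\Phi(\wt)$-leaves onto $\widetilde{\mathcal G}^s_\Psi(\wt)$-leaves, and similarly for unstable leaves. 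Under the product structures, continuity of $\varphi_\wt$ on each square reduces to continuity of the induced map $\varphi^s_\wt$ between the corresponding intervals of ${\mathcal H}^s_\Phi(\wt)$ and ${\mathcal H}^s_\Psi(\wt)$, together with continuity of the analogous unstable map $\varphi^u_\wt$.

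\emph{Step 3 (monotonicity on leaf spaces).} Fix a stable elementary band $\wA = \wA(\wt, \wt')$ in $\wt$. Section~\ref{sub:stablestable} provides a homeomorphism $f^s_{\wt, \wt'}$ from the $\Phi$-leaf interval of $\wA$ onto ${\mathcal H}^s_\Phi(\wt')$, and the $\widetilde{\Psi}$-analog $g^s_{\wt, \wt'}$. With the orientation conventions fixed just before the statement of this lemma, $f^s_{\wt, \wt'}$ and $g^s_{\wt, \wt'}$ preserve or reverse orientation simultaneously, and the commutative relation $g^s_{\wt, \wt'} \circ \varphi^s_\wt = \varphi^s_{\wt'} \circ f^s_{\wt, \wt'}$ propagates monotonicity from $\varphi^s_{\wt'}$ to $\varphi^s_\wt$. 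Iterating down through successive tori refines the subdivision of the $\Phi$-leaf interval of $\wA$ on which $\varphi^s_\wt$ is known to be monotonic, and the union of these subdivision points is dense in the interval. A bijection between two copies of $\mathbb R$ that is monotonic on a dense subset is monotonic everywhere, and a monotonic bijection between intervals is a homeomorphism. The symmetric argument (applied to the time-reversed flows) yields continuity of $\varphi^u_\wt$. Continuity across the shared periodic leaves bounding the elementary bands gives continuity of $\varphi_\wt$ on all of $\wt$.

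\emph{Step 4 (continuity at vertical orbits).} Let $\alpha \in \Delta$ and let $(\wx_n)$ be a sequence in $\oo_\Phi$ converging to $\Theta_\Phi(\alpha)$. Then the $\wwp$-orbits of lifts of the $\wx_n$ accumulate on $\alpha$ forward or backward, so longer and longer prefixes of $I^+(\wx_n)$ or $I^-(\wx_n)$ must equal the prefixes of the eventually constant itinerary at $\alpha$. Because the $\widetilde{\Psi}$-itinerary of $\varphi(\wx_n)$ coincides with the $\wwp$-itinerary of $\wx_n$ by construction of $\varphi$ and the $\widetilde{\Psi}$-analog of Proposition~\ref{prop:imageI}, the $\widetilde{\Psi}$-orbits of $\varphi(\wx_n)$ accumulate on $\alpha$ as well, yielding $\varphi(\wx_n) \to \Theta_\Psi(\alpha)$.

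\emph{Main obstacle.} The critical difficulty is Step 3: simultaneously tracking the orientations on the leaf spaces ${\mathcal H}^{s,u}(\wt)$ for both flows and verifying that the first-return maps $f^s_{\wt, \wt'}$ and $g^s_{\wt, \wt'}$ really do preserve or reverse orientation together on every block. This rests on the closed leaves of $\mathcal{G}^s_\Phi(T_k)$ and $\mathcal{G}^s_\Psi(T_k)$ having matching holonomy type, which in turn relies on the hypothesis that the flow directions agree on vertical periodic orbits. Without this matching, monotonicity of $\varphi^s_\wt$ could fail and the entire argument would collapse.
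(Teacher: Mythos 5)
Your Steps 1--3 reproduce, in substance, the paper's own argument: restrict to the chart $\cp_\Phi = \Theta(\wt)$, use the product structure ${\mathcal H}^s \times {\mathcal H}^u$, and prove that $\varphi^s$ preserves the total order on the stable leaf space by combining (i) order preservation between elementary segments, (ii) the relation $g^s_{\wt,\wt'}\circ\varphi^s = \varphi^s_*\circ f^s_{\wt,\wt'}$, and (iii) the fact that $f^s_{\wt,\wt'}$ and $g^s_{\wt,\wt'}$ preserve or reverse orientation simultaneously. One caution in Step 3: the principle ``a bijection of $\mathbb R$ monotonic on a dense subset is monotonic everywhere'' is false as literally stated (one can permute points off the dense set). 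What saves you is the stronger fact you actually have, namely that $\varphi^s$ respects the order of the \emph{pieces} of each successive subdivision and that, by Lemma~\ref{le:memefeuille2}, these subdivisions separate any two distinct leaves; given $S \prec S'$ one then places them in distinct pieces of some subdivision and concludes. This is exactly how the paper argues, so the slip is one of phrasing rather than substance, but you should make the appeal to Lemma~\ref{le:memefeuille2} explicit at this point rather than asserting density.

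Step 4, however, contains a genuine gap. If $\wx_n \to \Theta(\alpha)$ in $\oo_\Phi$ with $\alpha \in \Delta$, it is \emph{not} true that the $\wwp$-orbits of the $\wx_n$ accumulate on $\alpha$ forward or backward: only orbits lying in $\wws(\alpha)\cup\wwu(\alpha)$ accumulate on $\alpha$, whereas a generic nearby orbit merely shadows $\alpha$ for a long but finite time and then leaves. Consequently the itineraries of the $\wx_n$ are sequences of tori and never agree with the constant itinerary $(\ldots,\alpha,\alpha,\ldots)$ on any prefix, so the comparison of prefixes you invoke does not make sense, and nothing you have said shows that the $\widetilde\Psi$-orbits with the same itineraries converge to $\Theta(\alpha)$ in $\oo_\Psi$. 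The paper closes this point differently: having shown that $\varphi$ restricts to a homeomorphism of $\oo_\Phi\setminus\Delta_\Phi$ onto $\oo_\Psi\setminus\Delta_\Psi$ (two planes minus closed discrete sets), it extends $\varphi$ continuously over the punctures via the space of ends, and then identifies this extension with $\varphi$ on $\Delta_\Phi$ by observing that $\varphi$ carries an open half-leaf of the stable leaf of $\theta\in\Delta_\Phi$ onto an open half-leaf of the stable leaf of $\varphi(\theta)$, and that this half-leaf accumulates on no other element of $\Delta_\Psi$. You need some argument of this kind (or a direct combinatorial description of a neighborhood basis of $\Theta(\alpha)$) to finish; as written, Step 4 does not establish continuity at the points of $\Delta_\Phi$.
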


\begin{proof}
The only remaining point to prove is the continuity of $\varphi$ (the continuity of the inverse map $\varphi^{-1}$ is obtained by reversing the arguments below).
We already know that two orbits lie in the same stable leaf if and only if their itineraries, up to a shift, coincide after some time. Hence, $\varphi$
maps the foliation $\oo_\Phi^s$ onto the foliation $\oo_\Psi^s$, and similarly, $\varphi$ maps
$\oo_\Phi^u$ onto $\oo_\Psi^u$.

Observe that the projection of $\Delta$ by the map
$\Theta: \mi \rightarrow \oo_\Phi$
is a closed discrete subset of $\oo_\Phi$ that we denote by
$\Delta_\Phi$, and the image of $\Delta$ by
$\varphi$ is a closed discrete subset $\Delta_\Psi$
of $\oo_\Psi$ corresponding to the lifts of vertical periodic orbits of $\Psi$.

We first show the continuity of $\varphi$ on $\oo_\Phi \setminus \Delta_\Phi$. Let $\theta$
be an element of $\oo_\Phi \setminus \Delta_\Phi$. This is an orbit
of $\wwp$ which
crosses some element $\wt$ of $\cT$ at a point $\wx$.
The restriction to $\wt$ of the projection map
$\Theta: \mi \to \oo_\Phi$ is injective as remarked before.
Let $\cp_\Phi$ be the projection
of $\wt$ to $\oo_\Phi$; it is an open $2$-plane contained in $\oo_\Phi \setminus \Delta$. Elements of $\cp_\Phi$ are
characterized by the property that their itinerary (which is well-defined up to the shift map) contains
$\wt$. It follows that the image of $\cp_\Phi$ by $\varphi$ is the projection $\cp_\Psi$ of $\wt$ in $\oo_\Psi \setminus \Delta_\Psi$. We will show that
the restriction of $\varphi$ to $\cp_\Phi$ is continuous, which will prove as required that
$\varphi$ is continuous on $\oo_\Phi \setminus \Delta_\Phi$.
The restrictions $\cp^s_\Phi$, $\cp^u_\Phi$
of $\oo_\Phi^s$ and $\oo_\Phi^u$ to $\cp_\Phi$ are the projections of $\widetilde{\mathcal G}^s_\Psi(\wt)$,
$\widetilde{\mathcal G}^u_\Psi(\wt)$. They are regular foliations; more precisely, they are product foliations,
transverse to each other. Every leaf of $\cp^s_\Phi$ intersects every
leaf of $\cp^u_\Phi$ in one and only one point. In summary, $\cp_\Phi$ is homeomorphic
to ${\mathcal H}^s_{\cp_\Phi} \times {\mathcal H}^u_{\cp_\Phi} \approx {\mathbb R} \times {\mathbb R}$, where
${\mathcal H}^{s,u}_{\cp_\Phi}$ denotes the leaf space of $\oo_\Phi^{s,u}$ 
restricted to $\cp_\Phi$ respectively.

Similarly, $\cp_\Psi$ is homeomorphic to ${\mathcal H}^s_{\cp_\Psi} \times {\mathcal H}^u_{\cp_\Psi}$. Furthermore, since $\varphi$ maps $\oo_\Phi^s$, $\oo_\Phi^u$ onto $\oo_\Psi^s$, $\oo_\Psi^u$, the restriction of
$\varphi$ to $\cp \approx {\mathcal H}^s_{\cp_\Phi} \times {\mathcal H}^u_{\cp_\Phi}$ has the form:
$$(S,U) \to (\varphi^s(S), \varphi^u(U))$$
where $S$, $U$ denote leaves of $\cp^s_\Phi$, $\cp^u_\Phi$ and
$\varphi^s(S), \varphi^u(U)$ are the induced images in the
leaf space level.

Each leaf space ${\mathcal H}^s_{\cp_\Phi}$, ${\mathcal H}^s_{\cp_\Psi}$ admits a subdivision in \textit{elementary segments} $J_\Phi(\wt, \wt')$ (respectively $J_\Psi(\wt, \wt')$), which are the projections of elementary bands $A(\wt, \wt')$ where $n(\wt, \wt')=1$. Since the elementary bands in
$\wt$ for $\wwp$ and $\widetilde{\Psi}$ are exactly the same, the map $\varphi^s$ preserves the order between elementary segments.
This uses the choice of transverse  orientations for ${\mathcal G}^s_\Phi(T)$ and
${\mathcal G}^s_\Psi(T)$.

Let $S$, $S'$ be arbitrary elements of ${\mathcal H}^s_{\cp_\Phi}$, such that $S \prec S'$ for the order defined on ${\mathcal H}^s_{\cp_\Phi} \approx {\mathcal H}^s_\Phi(\wt)$ in Remark \ref{rk:orientons}.
Then, if $S$, $S'$ lie in different elementary bands, it follows from what we have just seen
that $\varphi^s(S) \prec \varphi^s(S')$, since $\varphi^s$ preserves the order between the elementary intervals $J_\Phi(\wt, \wt')$ and $J_\Psi(\wt, \wt')$.

Assume now that $S$, $S'$ lie in the same elementary band. Since $S \neq S'$, there is an element
$\wt'$ of $\cT$ such that $f^s_{\wt, \wt'}(S)$ and $f^s_{\wt, \wt'}(S')$
either lie in different elementary segments of ${\mathcal H}^s_\Phi(\wt')$
or lie in the closure of the same elementary segment.
The second case is equivalent to $S$ or $S'$ being in the stable manifold
of a lift of a vertical periodic orbit.
We will deal with the first case, the second case being simpler.

As above we have a map $\varphi^s_*: {\mathcal H}^s_{\cp'_\Phi}
\to {\mathcal H}^s_{\cp'_\Psi}$ between the corresponding leaf
spaces of foliations in $\wt'$.
Here $\cp'_\Phi$ and $\cp'_\Psi$ are the projections of $\wt'$ to the
orbit spaces of $\wwp$ and $\widetilde{\Psi}$ respectively.
We clearly have

$$g^s_{\wt, \wt'}(\varphi^s(S)) \ = \ \varphi^s_*(f^s_{\wt, \wt'}(S))
\ \ \ \ {\rm and} \ \ \ \
g^s_{\wt, \wt'}(\varphi^s(S')) \  = \ \varphi^s_*(f^s_{\wt, \wt'}(S')).$$

\noindent
Hence, $g^s_{\wt,
\wt'}(\varphi^s(S))$ and $g^s_{\wt, \wt'}(\varphi^s(S)')$ lie in different elementary segments of
${\mathcal H}^s_\Phi(\wt')$. We have the following alternatives:

\begin{description}
  \item[If $f^s_{\wt, \wt'}$ preserves orientation] then $f^s_{\wt, \wt'}(S) \prec f^s_{\wt,
\wt'}(S')$. In other words, the elementary segment containing $f^s_{\wt, \wt'}(S)$ is above the
elementary segment containing $f^s_{\wt, \wt'}(S')$. Since $\varphi^s_*$
preserves the order between elementary segments, we obtain:
      $$g^s_{\wt, \wt'}(\varphi^s(S)) \ = \ \varphi^s_*(f^s_{\wt, \wt'}(S)) \ 
\prec \ \varphi^s_*(f^s_{\wt, \wt'}(S')) \ = \ g^s_{\wt, \wt'}(\varphi^s(S'))$$
      But in this case, $g^s_{\wt, \wt'}$ is also orientation preserving; hence $\varphi^s(S) \prec \varphi^s(S')$.
  \item[If $f^s_{\wt, \wt'}$ reverses orientation] then $f^s_{\wt, \wt'}(S') \prec f^s_{\wt, \wt'}(S)$, therefore:
      $$g^s_{\wt, \wt'}(\varphi^s(S')) = \varphi^s_*(f^s_{\wt, \wt'}(S')) \prec \varphi^s_*(f^s_{\wt, \wt'}(S)) = g^s_{\wt, \wt'}(\varphi^s(S))$$
      Since $g^s_{\wt, \wt'}$ reverses orientation, we deduce $\varphi^s(S) \prec \varphi^s(S')$.
\end{description}

In both cases, we have proved $\varphi^s(S) \prec \varphi^s(S')$.
Therefore, $\varphi^s$ preserves the total orders on ${\mathcal H}^s_{\cp_\Phi}$ and ${\mathcal H}^s_{\cp_\Psi}$; since we already know that it is a bijection, it follows that $\varphi^s$ is a homeomorphism.

We can reproduce the same argument, but this time for the negative itineraries, and this time involving the unstable leaves: we then obtain that $\varphi^u$ is a homeomorphism. Therefore,
the restriction of $\varphi$ to $\cp_\Phi$ is continuous.
As previously observed, it proves that $\varphi$ is continuous on $\oo_\Phi \setminus \Delta_\Phi$.

Now the continuity on the entire $\oo_\Phi$ follows easily: indeed, on the one hand, the space of ends of
$\oo_\Phi \setminus \Delta_\Phi$ (respectively $\oo_\Psi \setminus \Delta_\Psi$) is naturally the union of $\Delta_\Phi$
and the end $\infty$ of the plane $\oo_\Phi$. Since $\oo_\Phi$ is a two dimensional plane,
and $\Delta_\Phi$ is discrete, it now follows that the restriction of
$\varphi$ to $\oo_\Phi \setminus \Delta_\Phi$ admits a unique continuous extension $\bar{\varphi}: \oo_\Phi \to
\oo_\Psi$, mapping $\Delta_\Phi$ onto $\Delta_\Psi$. On the other hand, $\varphi$ maps bijectively $\Delta_\Phi$
onto $\Delta_\Psi$. The problem is to prove that $\bar{\varphi}$ and $\varphi$ coincide on $\Delta_\Phi$.
This follows easily from the fact that for every element $\theta$ of $\Delta_\Phi$, $\varphi$ maps an
open half leaf of the
stable leaf of $\theta$ onto an open half leaf of
the stable leaf of $\varphi(\theta)$, and that these stable leaves do not
accumulate at other elements of $\Delta_\Psi$.

We have proved that $\varphi: \oo_\Phi \to \oo_\Psi$ is continuous. The proof of the Lemma is completed.
\end{proof}

\begin{proposition}{(Isotopic equivalence)}
\label{pro.theoremD'}
The flows $\Phi$ and $\Psi$ are isotopically equivalent.
\end{proposition}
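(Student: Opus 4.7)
The plan is to promote the orbit-space homeomorphism $\varphi:\oo_\Phi\to\oo_\Psi$ of Lemma~\ref{le:continuous} to a topological equivalence $F:\mi\to\mi$ between $\wwp$ and $\widetilde{\Psi}$, descend it to a self-conjugacy $f:M\to M$ between $\Phi$ and $\Psi$, and then invoke Waldhausen's theorem to upgrade $f$ to an isotopy.

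To build $F$, I use the planes of $\cT$ as simultaneous cross-sections to both lifted flows. For each $\wt\in\cT$ the inclusions $i_\Phi:\wt\hookrightarrow\oo_\Phi$ and $i_\Psi:\wt\hookrightarrow\oo_\Psi$ are embeddings, and the itinerary characterization gives $\varphi(i_\Phi(\wt))=i_\Psi(\wt)$; this defines a self-homeomorphism $F_{\wt}:=i_\Psi^{-1}\circ\varphi\circ i_\Phi$ of $\wt$. I extend $F$ on the rest of $\mi$ as follows: on a $\wwp$-segment going from $\wt_i(\wx)$ at time $0$ to $\wt_{i+1}(\wx)$ at time $\tau$, by the affine time rescaling onto the corresponding $\widetilde{\Psi}$-segment joining $F_{\wt_i(\wx)}(\wx)$ to $F_{\wt_{i+1}(\wx)}(\wwp^{\tau}(\wx))$; on the orbits in $\Delta$, which are by hypothesis oriented curves common to the two flows, by the affine rescaling between their $\wwp$- and $\widetilde{\Psi}$-periods; and on a trapped orbit in $\wws(\alpha)\cup\wwu(\alpha)$, whose itinerary ends with an infinite tail of some $\alpha\in\Delta$, by the previous rule on the initial finite crossing segment and by a matching asymptotic parametrization onto the limit orbit $\alpha$ for the remaining ray.

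The map $F$ preserves the orbit decomposition and descends on orbit spaces to $\varphi$, so it sends oriented $\wwp$-orbits to oriented $\widetilde{\Psi}$-orbits. Equivariance under $\pi_1(M)$ follows from equivariance of $\varphi$, from the $\pi_1(M)$-invariance of $\cT$ and $\Delta$, and from the intrinsic character of the flight-time reparametrization. Continuity of $F$ away from $\wws(\Delta)\cup\wwu(\Delta)$ is direct: flight times between successive elements of $\cT$ are bounded below by the positive constant of~\eqref{eq:TT'} and depend continuously on the starting point inside each stable band $\wA(\wt,\wt')$, and $\varphi$ itself is already known to be continuous. Once $F$ is known to be a homeomorphism, it descends to a homeomorphism $f:M\to M$ which is a topological equivalence between $\Phi$ and $\Psi$, and since $F$ commutes with every deck transformation we have $f_\ast=\mathrm{id}$ on $\pi_1(M)$. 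Because $M$ is orientable and Haken, Waldhausen's theorem \cite{waldlarge} makes such an $f$ isotopic to the identity; composing the flow isotopy with this ambient isotopy produces the claimed isotopic equivalence between $\Phi$ and $\Psi$.

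The main obstacle will be the continuity of $F$ across $\Delta\cup\wws(\Delta)\cup\wwu(\Delta)$. For $\wx\in\wws(\alpha)\setminus\alpha$ the forward orbit accumulates on $\alpha$, so the infinitely many local reparametrizations used to define $F$ along successive crossings must converge to the single affine rescaling already fixed on $\alpha$, and a similar issue arises for nearby points whose forward orbits escape a neighborhood of $\alpha$ only after making many turns close to it. This is where the three standing hypotheses do the work: the two flows orient $\alpha$ identically, so both forward rays approach $\alpha$ in the same dynamical direction; the stable vertical annuli at $\alpha$ coincide for $\Phi$ and $\Psi$, so $\wws_\Phi(\alpha)$ and $\wws_\Psi(\alpha)$ share the same prong combinatorics near $\alpha$ and $\varphi$ sends corresponding prongs to corresponding prongs; and the elementary bands of the tori successively crossed by the orbit coincide for the two flows. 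Combining these, the piecewise affine extension of $F$ passes continuously to the prescribed rescaling on $\alpha$, which closes the construction and hence the proof.
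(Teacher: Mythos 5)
Your overall route --- promote the equivariant orbit-space homeomorphism $\varphi$ of Lemma \ref{le:continuous} to a conjugacy of the lifted flows, descend it to $M$, and finish with Waldhausen --- is exactly the paper's route. The difference is that where the paper simply cites Haefliger, Ghys and the Theorem of \cite{Ba1} for the passage from an equivariant orbit-space equivalence to an actual topological equivalence, you try to build that equivalence by hand, and your construction has a genuine gap at precisely the point those references exist to handle: the continuity of $F$ on $\Delta\cup\wws(\Delta)\cup\wwu(\Delta)$. The piecewise-affine time rescaling between consecutive crossings of $\cT$ does not extend continuously there, and the three standing hypotheses you invoke cannot rescue it, because they are purely combinatorial while the obstruction is quantitative. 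Concretely: for $\wx$ in an entrance plane $\wt$ approaching the periodic leaf $\wws(\alpha)\cap\wt$, the $\wwp$-transit time to the exit plane grows like $\mu^{-1}\log(1/u)$, where $\mu$ is the expansion rate of $\Phi$ at $\alpha$ and $u$ is an unstable coordinate of $\wx$, while the $\widetilde{\Psi}$-transit time of the corresponding point grows like $(\mu')^{-1}\log(1/u')$, with $u'=u'(u)$ governed by the (arbitrary) modulus of continuity of $\varphi$ across that leaf. The rescaling factor you apply to a fixed intermediate time $t$ is the ratio of these two quantities, and its limit as $u\to 0$ need not exist; when it does exist it need not agree with the ratio of periods you have already imposed on $\alpha$ itself. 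Hence no choice of ``matching asymptotic parametrization'' on $\wws(\alpha)$ and on $\alpha$ can make the affinely interpolated $F$ continuous, and the sentence ``the piecewise affine extension of $F$ passes continuously to the prescribed rescaling on $\alpha$'' asserts the hard step rather than proving it.

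The repair is not a patch to the affine interpolation but a different construction, and it is the content of the results the paper cites: one builds the equivalence from the local product structure of the invariant foliations (define it on transversals via $\varphi$, extend along stable and then unstable leaves, letting the time parameter be dictated by these intersections, with an averaging of the time change), as in \cite{Gh} and the Theorem of \cite{Ba1}, which the paper asserts carries over verbatim to pseudo-Anosov flows. If you want a self-contained proof you must reproduce that argument; otherwise you should cite it, as the paper does. Your endgame --- equivariance of $F$ gives $f_*=\mathrm{id}$ on $\pi_1(M)$, hence $f$ is homotopic and then, by \cite{waldlarge}, isotopic to the identity --- is correct and coincides with the paper's.
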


\begin{proof}{}
Given Lemma \ref{le:continuous} this follows from established results:
First, Haefliger \cite{Hae} showed that Lemma
\ref{le:continuous} implies that there is a homotopy equivalence
of $M$ which takes orbits of $\Phi$ to orbits of $\Psi$.
Ghys \cite{Gh} explained how to produce a homeomorphism with
the same properties. This was explicitly done by the first
author, Theorem of  \cite{Ba1}, for Anosov flows. Essentially
the same proof works for pseudo-Anosov flows. Now since this homeorphism is an homotopy equivalence, it is isotopic to the identity (\cite[Theorem $7.1$]{waldlarge}).
\end{proof}

\section{Model flows}
\label{top:models}

\subsection{Construction of model pseudo-Anosov flows}
\label{sub.construct}

In this section, we recall the construction in \cite{bafe}, section $7$, of model pseudo-Anosov flows. They are obtained from building blocks,
which are standard neighborhoods of
intrinsic elementary Birkhoff annuli. Such a neighborhood
is homeomorphic to
$[0,1] \times {\bf S}^1 \times [0,1]$
(with corresponding $(x,y,z)$ coordinates).

The Birkhoff annulus is $[0,1] \times {\bf S}^1 \times
\{ 1/2 \}$, where
$\{ 0 \} \times {\bf S}^1 \times \{ 1/2 \}$ and
$\{ 1 \} \times {\bf S}^1 \times \{ 1/2 \}$ are the only
closed orbits of the semiflow in this block and they
are the boundaries of the Birkhoff annulus (see figure \ref{box}).

\begin{figure}
\centeredepsfbox{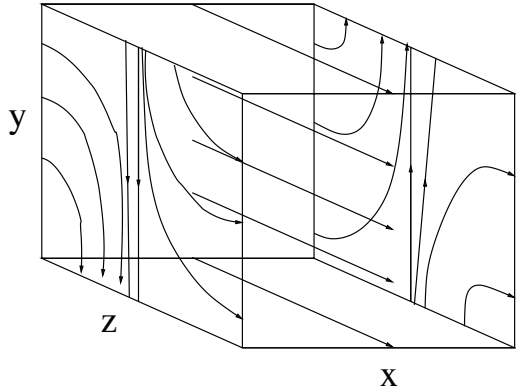}
\caption{Standard neighborhood of
a Birkhoff annulus.}
\label{box}
\end{figure}

The flow is tangent to the side boundaries
$\{ 0 \} \times {\bf S}^1 \times [0,1]$ and
$\{ 1 \} \times {\bf S}^1 \times [0,1]$.
The flow is incoming along
$[0,1] \times {\bf S}^1 \times \{ 0 \}$ and
outgoing along
$[0,1] \times {\bf S}^1 \times \{ 1 \}$.
The vertical orbits are the periodic orbits in
the block. For example, the stable manifold of
$\{ 0 \} \times {\bf S}^1 \times \{ 1/2 \}$ is
$\{ 0 \} \times {\bf S}^1 \times [0,1/2] $
and the unstable manifold is
$\{ 0 \} \times {\bf S}^1 \times [1/2,1]$.
In \cite{bafe} we prescribe an explicit formula for the
flow, denoted by $\Psi_\lambda$, in the block $B$, depending on a parameter $\lambda$.

Several copies of these blocks can be glued one to the other
along annuli which are half
of a tangential boundary annulus.
These are either the local stable or unstable
manifolds of one of the periodic orbits.
For example one
may glue
$\{ 0 \} \times {\bf S}^1 \times [0,1/2]$
to a similar half annulus in another copy of the block.
In particular we are glueing stable or unstable
manifolds of certain vertical orbits to similar
sets of other vertical orbits.
The glueings preserve the flow.
One can do this
in a very flexible way, so that in the end all
tangential boundary components are eliminated.
At this point one obtains
a semiflow in a manifold $P'$ which is a circle bundle over
a surface with boundary $\Sigma$. In order to define model flows,
we actually specify particular glueings between the stable/unstable annuli, so that every boundary component admits a natural coordinate system $(x,y)$ (\cite[Section 8]{bafe}).
In particular, the coordinate $y$ defines a function on $P'$,
whose level sets are sections of the fibration
over $\Sigma_i$. All this process is encoded by the data of a fat graph
$X$ embedded in $\Sigma$ satisfying the following properties:
\begin{enumerate}
  \item $X$ is a deformation retract of $\Sigma$.
  \item The valence of every vertex of $X$ is an even number.
  \item The set of boundary components of $\Sigma$ is partitioned
  in two subsets so that for every edge $e$ of $X$,
  the two sides of $e$ in $\Sigma$ lie in different sets of this partition.
  \item Each loop in $X$ corresponding to a boundary component of $\Sigma$ contains
  an even number of edges.
\end{enumerate}

We also allow Dehn surgery along the vertical
orbits so that the resulting manifold is a Seifert bundle over the surface
$\Sigma$. This operation is encoded by the data of the fat graph $(\Sigma, X)$
and also the Dehn surgery coefficients $D$ on vertical orbits (these coefficients
are well-defined with the convention that the meridians of the Dehn fillings to be the
loops contained in the section $\{y = Cte \}$ mentioned above). We denote
the resulting manifold with semi-flow by $(P(\Sigma, X, D), \Psi_\lambda)$.
Observe that $P(\Sigma, X, D)$ is a Seifert manifold.
Moreover, the $(x, y)$ coordinates in the initial building block provides
natural coordinates on every boundary component of $P(\Sigma, X, D)$. The last item
above ensures that each boundary component of $P(\Sigma, X, D)$ is a torus, as opposed to
being a Klein bottle.

Finally, we glue several copies $P_1$, ... , $P_k$ of such
Seifert fibered manifolds along their transversal boundaries.
Let us be more precise: for each $P_i$ and each component $T$ of
$P_i$, select a {\em vertical/horizontal basis} of $H_1(T, \mathbb Z)$, i.e.
a basis whose first element is represented by vertical loops
(i.e. regular fibers of $P_i$), and whose second element
is represented by the intersection between $T$ and the preferred section $\{ y = Cte \}$
we have defined above. One could think at first glance that we have defined by this way
a canonical basis of $H_1(T, \mathbb Z)$, but the point is that these homology classes are
defined only up to sign: $H_1(T, \mathbb Z)$ admits four vertical/horizontal basis.

Once such a basis is selected in each boundary torus, select
a pairing between these boundary tori, and for each such a pair $(T, T')$
choose
a two-by-two matrix $M(T,T')$ with integer
coefficients. It defines an isomorphism between $H_1(T, \mathbb Z)$ and $H_1(T', \mathbb Z)$;
hence an isotopy class of homeomorphisms between $T$ and $T'$. In
order to obtain a pseudo-Anosov flow in the resulting
manifold, it is necessary that the glueing maps do not map fibers
to curves homotopic to fibers, i.e. that no $M(T,T')$ is upper triangular.
In \cite{bafe} we show that the
the resulting flow, still denoted by $\Psi_\lambda$, is pseudo-Anosov
as soon as the real parameter $\lambda$ is sufficiently large.
We proved it for very particular
glueing maps between the boundary tori, which are linear in the natural
coordinates defined by the $(x,y)$ coordinates (even if the proof in \cite{bafe}
can be easily extended to less restrictive choices).
It is clear that for the resulting flow
$\Psi_\lambda$ - called a {\em model flow} - the spine in every piece $P_i$,
as stated in Theorem C, is the preimage
by the Seifert fibration of the fat graph $X_i$.

An immediate consequence of Theorem D' is that the resulting pseudo-Anosov flow
is insensitive up to isotopic equivalence to several choices: it does not depend on the real parameter $\lambda$, nor to the choice of the glueing map in the isotopy class defined by
the matrices $M(T,T')$, as long as these
data are chosen so that the resulting flow is pseudo-Anosov.

In summary, the model flow is uniquely defined up to isotopic equivalence by:

\begin{enumerate}
  \item The data of a family of fat graphs $(\Sigma_i, X_i)$ and
  Dehn filling coefficients $D_i$ ($i=1, ... ,k$).
  \item A choice of vertical/horizontal basis in each $H_1(T, \mathbb Z)$,
  \item A pairing between the boundary tori of the
  $P_i = P(\Sigma_i, X_i, D_i)$ (i.e. a pairing between the boundary components of
  the $\Sigma_i$'s).
  \item For each such a pair $(T, T')$, a two-by-two matrix with integer coefficients which is not upper triangular.
\end{enumerate}

Observe that item $(2)$ is not innocuous: Suppose that for some $P_i$ we replace the first vector in
the vertical/horizontal basis of each component of $P_i$ (but we do not modify the basis
of the boundary components of the other pieces $P_j$, nor any of the other combinatorial data).
Then we obtain a model flow on the same manifold, with the same spine decomposition and vertical
stable/unstable annuli, {\em but where the orientation of the vertical periodic orbits has been reversed.} Therefore, this second model pseudo-Anosov flow is {\em not} isotopically equivalent to the initial model flow.

We could formulate a statement establishing precisely when two initial combinatorial data provide topologically
equivalent model pseudo-Anosov flows, but it would require a detailed presentation of F. Waldhausen's classification
Theorem of graph manifolds (\cite{wald1, wald2}). We decided that it would be an unnecessary complication, and that Theorems D and D' already provide a convenient formulation for the solution of the classification problem of totally periodic pseudo-Anosov flows.
For example one issue we do not address is the choice of section in each
Seifert fibered piece minus small neighborhoods of singular fibers.
The boundary curves of these sections are essential to the classification of 
totally periodic pseudo-Anosov flows. These boundary curves are determined
up to Dehn twists along vertical curves
\cite{wald1,wald2,BNR}.


\subsection{Topological equivalence with model pseudo-Anosov flows }
\label{sub.isotopequiv}
In this section we present the proof of the Main Theorem, i.e. we show why every totally periodic pseudo-Anosov flow $(M, \Phi)$ is topologically equivalent to one of the model pseudo-Anosov flows
constructed in the previous section.

According to section \ref{sec:disj}, the manifold $M$ is obtained by glueing
the Seifert pieces $N(Z_i)$ along (transverse) tori. Moreover, every $N(Z_i)$ can be obtained by
glueing appropriate sides of a collection of ``blocks" $U(A_p)$, where each $A_p$ is a Birkhoff annulus. The union of these Birkhoff annuli is the spine $Z_i$
of $N(Z_i)$.
Every block $U(A_p)$ can be described as follows: its boundary contains two annuli, one inward and the other outward, whose lifts correspond to what have been called
elementary bands in the previous section. The block $U(A_p)$ also contains two periodic orbits (the boundary components of $A_p$; which may be identified through the glueing),
two stable annuli (one for each vertical periodic orbit) and two unstable annuli (also one for each periodic orbit) connecting the periodic orbits to the
entrance/exit transverse annuli in the boundary.
All these annuli constitute the boundary of $U(A_p)$.
The remaining part of $U(A_p)$ is a union of orbits
crossing $A_p$, and joining the entrance annulus to the exit annulus.
Of course, the lifts in $\mi$ of these blocks are nothing but what have been called \textit{blocks} in section~\ref{sub:stablestable}.

The entire Seifert piece $N(Z_i)$ is obtained by glueing all these $U(A_p)$ along
their stable and unstable sides.
The way this glueing has to be performed is encoded by a fat graph whose vertices correspond to
the vertical periodic orbits, and the edges are the Birkhoff annuli $A_p$. Let us be slightly more precise:
remove around every vertical periodic orbit a small tubular neighborhood. The result is a compact $3$-manifold $N(Z_i)^\ast$ which is a circle bundle over
a surface $\Sigma^\ast_i$ with boundary.
Notice that, as any other circle bundle (orientable or not) over a surface (orientable or not) with non-empty boundary, this circle bundle admits a section. In other words,
we can consider $\Sigma^\ast_i$ as a surface embedded in $N(Z_i)^\ast$; the edges of $X_i$ then are the intersection between $\Sigma_i^\ast$ and the the Birkhoff annuli $A_p$.

There are three types of boundary components:

\begin{itemize}
  \item boundary components corresponding to exit transverse tori,
  \item boundary components corresponding to entrance transverse tori,
  \item boundary components corresponding to (the boundary of tubular neighborhoods of) vertical periodic orbits.
\end{itemize}

Let us define the surface $\Sigma_i$ obtained by shrinking the last type of boundary components to points, that we call \textit{special points}.
One can select the circle fibration $\eta_i^\ast: N(Z_i)^\ast \to \Sigma_i^\ast$ so that the
restrictions of the Birkhoff annuli $A_p$ are
vertical. Then their projections define
in $\Sigma_i$ a collection of segments which are the edges of a graph $X_i$ embedded in $\Sigma_i$. Moreover, since every orbit in $N(Z_i)$ crosses a Birkhoff
annulus or accumulates on a vertical orbit, $X_i$ is a retract of $\Sigma_i$, in other words,
$(\Sigma_i, X_i)$ is a fat graph.
Observe that it satisfies the four properties
required in the definition of model flows in the previous section: we have just established
the first and third items; the second item corresponds to the fact that at each vertical periodic orbit there is an equal
number of stable and unstable vertical annuli, so that each of them is adjacent to an even number of Birkhoff annuli. The last item corresponds to the fact that the component of $\partial P'$ must be tori, not Klein bottles.

Therefore, there is a model semi flow $\Psi_i$ on a circle bundle $P'_i = P(\Sigma_i, X_i) \rightarrow \Sigma_i$ as
described in the previous section.
This partial flow has essentially the same properties that the restriction $\Phi_i$ of $\Phi$ to $N(Z_i)$
has: $P'_i$ is a circle bundle over $\Sigma_i$;
its boundary components are transverse to $\Psi_i$; moreover the inward (respectively outward) boundary components for $\Psi_i$
correspond to the boundary components of $\Sigma_i$ that are the entrance (respectively exit) components as defined previously, i.e. with respect
to $\Phi_i$. Moreover, the preimage in $P'_i$ of edges of $X_i$ are annuli $A^0_p$ transverse to $\Psi_i$. Every orbit of $\Psi_i$ either crosses
one (and only one) $A^0_p$, or accumulates on one vertical periodic orbit.

The main difference is that $P'_i$ is a circle bundle over $\Sigma_i$,
whereas $N(Z_i)$ is merely a Seifert manifold with a
Seifert fibration.
In particular near the vertical periodic orbits, the fibration of $P_i$ is a product fibration.
Moreover, some of the vertical periodic orbit of $\Psi_i$ might be $1$-prong orbits.
There is an embedding $N(Z_i)^* \hookrightarrow P'_i$ preserving the fibers, and mapping
$\Sigma^\ast$, considered as a section in $N(Z_i)^\ast$, into
the canonical section $\{ y=Cte \}$ of $N(X_i) \to \Sigma_i$. We can furthermore choose
this embedding so that it is coherent relatively to the orientation of vertical orbits:
we require that the orientation of the regular fibers in each component $C$ of $\partial N(Z_i)^*$ defined by the oriented periodic orbit of $\Phi$ surrounded by $C$
coincide with the orientation defined by the periodic orbit of $\Psi_i$
surrounded by the image of $C$ in $P'_i$.

Although $N(Z_i)$ is not always homeomorphic to $P'_i$, it is obtained from it by a Dehn surgery along the vertical orbits.
This Dehn surgery is encoded by the data of Dehn coefficients $D_i$, i.e. the data at each vertex of $X_i$ of
a pair $(p_i, q_i)$ of relatively prime integers. This data is well-defined once
one selects the section $\Sigma_i^*$, since meridians around every vertical orbit can be defined as the loops contained in $\Sigma^*$. Hence, there is a homeomorphism between $N(Z_i)$ and the model
piece $P_i = P(\Sigma_i, X_i, D_i)$, which maps oriented vertical orbits of $\Phi$ contained
in $Z_i$ to oriented vertical orbits of the model semi-flow $\Psi_i$. Observe that
this homeomorphism maps stable/unstable vertical annuli into stable/unstable vertical annuli of $\Psi_i$.

Now $M$ is obtained by glueing exit transverse tori to entrance transverse tori of the various Seifert pieces $N(Z_i)$ through identification homeomorphisms
$\varphi_k$ (where $k$ describes the set of transverse tori $T_k$ as denoted previously).
The isotopy classes of these homeomorphisms can be characterized by two-by-two matrices,
once selected in each $H_1(T, \mathbb Z)$ vertical/horizontal basis, where the second (horizontal) element of the basis is now determined according to the section $\Sigma_i^\ast$.
These two-by-two matrices cannot be upper triangular, since adjacent Seifert pieces in a (minimal) JSJ decomposition cannot have freely homotopic regular fibers.

The choice of vertical/horizontal homological basis in each boundary torus of $N(Z_i)$
naturally prescribes a choice of vertical/horizontal homological basis in each boundary torus of $P_i \approx N(Z_i)$.

Hence we have collected all the necessary combinatorial data necessary for the construction
of a model flow $\Psi$ on a manifold $M_\Psi$ obtained by glueing the various pieces $P_i$.
The resulting manifold $M_\Psi$ is homeomorphic to $M$.

Now it should be clear to the reader that the $(M, \Phi)$ and $(M_\Psi, \Psi)$ have the same
combinatorial data, so that they are topologically equivalent
by theorem D'. This completes the proof of the Main Theorem.

\section{Concluding remarks}

Throughout this section $M$ is a graph manifold admitting
a totally periodic pseudo-Anosov flow.

\subsection*{Numbers of topological equivalence classes of pseudo-Anosov flows}

Waller \cite{Wa} proved that in general the number of fat graph structures on
a given topological surface $S = \Sigma_i$, even if always finite, can be quite big.
These all will generate topologically inequivalent pseudo-Anosov flows.
That is, different graphs $X_i$ yield different flows.
With the same fat graphs, with the careful choices of particular
sections and Dehn filling coefficients, made above,
then the following happens.
The only choice left is the direction of
the vertical periodic orbits.
Once the direction of a single vertical periodic orbit
in a fixed $P_i$ is chosen, then all the directions
in the other vertical orbits are determined, because the choice of orientations
propagates along Birkhoff annuli, i.e. edges of the fat graph.
But there are two choices here. So there are
$2^k$ inequivalent such pseudo-Anosov flows if there
are $k$ pieces in the JSJ decomposition of $M$.

Therefore, we have proved that there is no upper bound on the
number of topological equivalence classes of pseudo-Anosov flows
on $3$-manifolds.

\subsection*{Action of the mapping class group on the space of isotopic equivalence classes}

Since $M$ is toroidal its mapping class group is
infinite: it contains Dehn twists along embedded incompressible torus in $M$.
Let us be more precise: let $T$ be an embedded incompressible torus; and let $U(T)$ be a tubular neighborhood of $T$.
Since $M$ is orientable, $U(T)$ is
diffeomorphic to the product of an annulus $A$ by the circle ${\bf S}^1$. Let $\tau$ be a Dehn twist in the annulus $A$; then the map $\tau_T: A \times {\bf S}^1 \rightarrow A \times {\bf S}^1$ defined by $\tau_T(x, \theta) = (\tau(x), \theta)$ defines
a homeomorphism of $M$, with support contained in $U(T)$, which is not homotopically trivial
(since its action on the fundamental group of the incompressible torus $T$ is not trivial).
Notice in addition that there are infinitely many inequivalent ways of
expressing $U(T)$ as a product of an annulus and a circle.

Furthermore there may be symmetries of $M$. The JSJ
decomposition of $M$ is unique up to isotopies, so any
self homeomorphism of $M$ preserves the collection
of Seifert pieces \cite{Ja-Sh,Jo}, but conceivably could permute them,
and also in any piece it could have Dehn twist actions,
etc..

We conclude that
there are infinitely many isotopy equivalence classes of pseudo-Anosov flows in such
manifolds.

But the following remarkable property holds: Suppose that $T$ is a torus of the JSJ decomposition
and $\tau_T$ is a Dehn twist
in a {\em vertical} direction of $T$, i.e. for which the annnulus $A$
contains a loop freely homotopic to the regular fibers of one of the two Seifert pieces
bounded by $T$. Then $\tau_T$  does not change the isotopy class of totally periodic pseudo-Anosov flows,
i.e. the conjugate of the flow by any representant of $\tau_T$ is isotopically equivalent
to the initial flow.

Indeed, let $\Phi$ be any such a flow. Up to isotopic equivalence, one can assume that
$T$ is contained in one neighborhood $N(Z_i)$, in the region between $Z_i$ and a boundary component of $N(Z_i)$ isotopic to $T$. Then, we can select in the isotopy class of $\tau_T$
another homeomorphism $f$ with support disjoint from $Z_i$, disjoint from all the $N(Z_j)$ with $j \neq i$, and which preserves all the vertical
stable/unstable annuli in $N(Z_i)$. Therefore, the conjugate of $\Phi$ by $f$ has the same
combinatorial/topological data than $\Phi$, and thus, according to Theorem D', is isotopically equivalent to $\Phi$.

Observe that if $N(Z_i)$ is the other Seifert piece containing $T$ in its boundary, one can
obtain many other elements of the mapping class group in the stabilizer of the isotopic equivalence class by composing Dehn twists in the vertical direction for $N(Z_i)$ with
Dehn twists in the vertical direction for $N(Z_j)$.

\subsection*{Topological transitivity of totally periodic pseudo-Anosov flows}

It is easy to construct non transitive totally periodic pseudo-Anosov
flows. For simplicity we do an example without Dehn surgery.
Start with a surface $S$ with high enough genus
and three boundary components. Russ Waller \cite{Wa}
showed that $S$ has a structure as a fat graph with
the properties of section 5 and one can then easily
construct a semiflow $\Psi'$ in $S \times {\bf S}^1$,
with one exiting boundary component and two entering
components. Glue one entering boundary component with
the exiting one by an admissible glueing. This is
manifold $M_1$ with semiflow $\Psi_1$ with one
entering component. Do a copy of $M_1$ with a flow
reversal of $\Psi_1$ and then glue it to $M_1$
by an admissible map. The resulting manifold is
a graph manifold with a model flow which is
clearly not transitive: the boundary component
of $M_1$ is transverse to the flow and any orbit
intersecting this torus is trapped in $M_1$.

It is not very hard to characterize when exactly
the model flow is transitive: we claim that it is equivalent to
the following property:
the oriented graph $\mathfrak G$, which is the
quotient by $\pi_1(M)$ of the oriented graph $\widetilde{\mathfrak G}$ defined in section~\ref{su.realization},
is {\em strongly connected:} for any pair of vertex $T$ and $T'$ in $\mathfrak G$, there must be an oriented
path going from $T$ to $T'$ and another oriented path going from $T'$ to $T$.

This is done for a more
general class of flows (at least the Anosov case)
in a forthcoming article \cite{BBB} by B\'eguin, Bonatti and Yu.
Because of that we do not discuss further transitivity
of model flows here.

{\footnotesize
{
\setlength{\baselineskip}{0.01cm}

}
}

\end{document}